\documentclass[10pt]{article}

\usepackage{amsmath}
\usepackage{amsfonts}
\usepackage{amssymb}
\usepackage{framed}
\usepackage{mathtools}
\usepackage{enumerate}
\usepackage{mathrsfs}
\usepackage[hidelinks]{hyperref}
\usepackage{enumitem}
\usepackage{mathrsfs}
\usepackage{scrextend}
\usepackage{amsthm}
\usepackage{bbold}
\usepackage{bbm}
\usepackage{comment}

\newcommand{\GSp}{\mathrm{GSp}}

\usepackage[T1]{fontenc}
\usepackage{lmodern}

\usepackage{thmtools}
\usepackage{thm-restate}
\usepackage{mathabx}
\usepackage[margin=1.5in]{geometry}
\usepackage{rotating}

\usepackage[nameinlink]{cleveref}

\usepackage[utf8]{inputenc}
\usepackage[T1]{fontenc}

\usepackage{xcolor}

\usepackage{tikz-cd}

\numberwithin{equation}{section}


\theoremstyle{plain}
\newtheorem{thm}{Theorem}[section]
\newtheorem*{thm*}{Theorem}

\newtheorem{lem}[thm]{Lemma}

\newtheorem{cor}[thm]{Corollary}
\newtheorem{rem}[thm]{Remark}

\theoremstyle{definition}
\newtheorem{defn}[thm]{Definition}
\newtheorem{conj}[thm]{Conjecture}

\theoremstyle{remark}

\usepackage[nottoc,numbib]{tocbibind}

\tikzset{
  symbol/.style={
    draw=none,
    every to/.append style={
      edge node={node [sloped, allow upside down, auto=false]{$#1$}}}
  },
    labl/.style={anchor=south, rotate=90, inner sep=.5mm}
}

\newcommand\restr[2]{{
	\left.\kern-\nulldelimiterspace
	#1
	\vphantom{\big|}
	\right|_{#2}
	}}

\newcommand{\an}{\operatorname{an}}

\newcommand{\Aut}{\textrm{Aut}}

\newcommand{\ch}[1]{\widecheck{{#1}}}

\newcommand{\HL}{\operatorname{HL}}

\newcommand{\MT}{\mathbf{MT}}

\usepackage{tikz-cd}
\usetikzlibrary{cd}
\usepackage{leftidx}
\usetikzlibrary{positioning}
\usepackage{dsfont}
\usepackage{tikz}
\usetikzlibrary{matrix,arrows,decorations.pathmorphing,positioning}

\newcommand{\GL}{\operatorname{GL}}

\usepackage{amsmath,calligra,mathrsfs}

\usepackage[cal=boondoxo]{mathalfa}

 \newcommand{\Addresses}{{
  \bigskip
  \footnotesize

\textsc{CNRS, IMJ-PRG, Sorbonne Universit\'{e}, 4 place Jussieu, 75005 Paris, France}\par\nopagebreak
  \textit{E-mail address}, G.~Baldi: \texttt{baldi@imj-prg.fr} 

  \medskip

 \textsc{The Weizmann Institute of Science, Rehovot, Israel}\par\nopagebreak
  \textit{E-mail address}, G.~Binyamini: \texttt{gal.binyamini@weizmann.ac.il} 

  \medskip 
  
\textsc{I.H.E.S., Universit\'e Paris-Saclay, CNRS, Laboratoire Alexandre Grothendieck. 35 Route de Chartres, 91440 Bures-sur-Yvette (France)}\par\nopagebreak
  \textit{E-mail address}, D.~Urbanik: \texttt{urbanik@ihes.fr}
}}

\DeclareMathOperator{\sheafhom}{\mathcal{H \kern -1pt o \kern -2pt m}}
\DeclareMathOperator{\sheafper}{\mathcal{P \kern -1pt e \kern -2pt r}}
\DeclareMathOperator{\sheafiso}{\mathcal{I \kern -1pt s \kern -2pt o}}
\DeclareMathOperator{\sheafend}{\mathcal{E \kern -1pt n \kern -2pt d}}
\DeclareMathOperator{\sheafaut}{\mathcal{A \kern -1pt u \kern -2pt t}}

\newcommand{\C}{\mathbb{C}}
\tikzset{
  trim node/.default=1cm,
  trim node/.style={
    overlay,
    append after command={
      ([xshift={+#1}]\tikzlastnode.north west)
      ([xshift={+-#1}]\tikzlastnode.south east)}},
  down and trim/.default=1cm,
  down and trim/.style={
    yshift=-(\pgfmatrixcurrentcolumn-1)*1.5\baselineskip,
    trim node={#1}},
  downup and trim/.default=1cm,
  downup and trim/.style={
    yshift=iseven(\pgfmatrixcurrentcolumn) ? -1.5\baselineskip : 0pt,
    trim node={#1}},
  -|/.style={to path={-|(\tikztotarget)\tikztonodes}},
  |-/.style={to path={|-(\tikztotarget)\tikztonodes}},
  -| sl/.style={-|, xslant=-1},
  |- sl/.style={|-, xslant= 1},
  center picture/.style={
    trim left=(current bounding box.center),
    trim right=(current bounding box.center)}}

\newcommand{\Qbar}{\overline{\mathbb{Q}}}

\newcommand{\Q}{\mathbb{Q}}
\newcommand{\N}{\mathbb{N}}
\newcommand{\Z}{\mathbb{Z}}

\newcommand{\V}{\mathbb{V}}

\newcommand{\PP}{\mathbb{P}}

\usepackage{microtype}

\title{Algebraic Hodge generic points are dense}\date{\today}
\author{Gregorio Baldi, Gal Binyamini, and David Urbanik}

\begin{document}

\maketitle

\begin{abstract}
Let $f: X \to S$ be a quasi-projective family of varieties defined over $\overline{\mathbb{Q}} \subset \mathbb{C}$. We show that the points of $S(\overline{\mathbb{Q}})$ that are Hodge generic for the variation of Hodge structures associated to $f$ are analytically dense in $S(\mathbb{C})$. In fact, in the spirit of the Grothendieck period conjecture and under a large monodromy assumption, we prove the density of the points of $S(\overline{\mathbb{Q}})$ where the periods of the fibre do not  satisfy extra relations ``up to degree $\delta$''. As a by-product, we also establish new instances of the Mumford-Tate conjecture, beyond the realm of abelian motives. When the base $S$ is a curve, we provide quantitative estimates for points satisfying these properties.

The main technical contribution is a new result on relations satisfied by solutions of $G$-operators, which relies on height estimates due to Bombieri and Andr\'e.
\end{abstract}

\tableofcontents

\section{Introduction}\label{sec1}
The main result of this paper, \Cref{mainhodgegenthm}, is the abundance of $\Qbar$-Hodge generic points for every graded-polarizable variation of mixed Hodge structures associated to a family of quasi-projective varieties defined over $\Qbar$. In fact, this result is deduced from a general theorem on $G$-operators: \autoref{thmwithproof}. Before discussing these results further, we record some applications to the well-known \emph{motivic conjectures}. To keep the presentation as accessible as possible, rather than presenting the most general setting, we focus on the case of smooth hypersurfaces in projective space.

\subsection{Main motivic results}\label{mmr}

Let $Y$ be a smooth projective variety defined over a number field $K \subset \overline{\mathbb{Q}} \subset \mathbb{C}$. Fix integers $j,\ell,\delta \in \mathbb{N}$, with $\ell$ prime. We consider the following properties which will be described in detail in the next section; see also \cite[Sec. 7]{zbMATH02128361} for an introduction to such conjectures.
\begin{align*}
\mathrm{P}(Y,j,\delta) \; &:\;\; \text{The period conjecture in cohomological degree $j$ holds for } Y \text{ up to degree } \delta; \\[0.5em]
\mathrm{H}(Y,j) \; &:\;\;
\text{For all } a,b,c \ge 0, \text{ every Hodge class in } \\
&\quad \left(H^j(Y_{\mathbb{C}},\mathbb{Q})^{\otimes a} \otimes H^j(Y_{\mathbb{C}},\mathbb{Q})^{\vee\,\otimes b} \otimes \mathbb{Q}(c) \right) 
\text{ is in the $\Q$-span of algebraic classes;} \\[0.5em]
\mathrm{T}(Y,\ell,j) \; &:\;\; \text{For all } a,b,c \ge 0, \text{ every Tate class in } \\
&\quad \left(H^j(Y_{\Qbar},\mathbb{Q}_\ell)^{\otimes a} \otimes H^j(Y_{\Qbar},\mathbb{Q}_\ell)^{\vee\,\otimes b} \otimes \mathbb{Q}_{\ell}(c) \right) 
\text{ is in the $\Q_\ell$-span of algebraic classes;} \\[0.5em]
\mathrm{MT}(Y,\ell,j) \; &:\;\; \text{The $\ell$-adic Mumford--Tate conjecture holds for } Y \text{ in cohomological degree } j.
\end{align*}
 The precise meaning of \emph{the period conjecture up to degree $\delta$} is given in \S \ref{periodconj}, see indeed \Cref{degdeltaperconj}, \Cref{galperiodconj}, and \Cref{mainthmonperiod}. Moreover, if $\ell$ is not prime, we define $\mathrm{T}(Y,\ell,j)$ and $\mathrm{MT}(Y,\ell,j)$ to be the conjunction of the corresponding statements for all prime divisors of $\ell$. Observe that any two of the $\mathrm{T}(Y,\ell,j), \mathrm{H}(Y,j), \mathrm{MT}(Y,\ell,j)  $ imply the third.

Let $f: X \to S$ be a smooth projective family defined over a number field $K\subset \overline{\mathbb{Q}} \subset \mathbb{C}$, and let $j,d, \ell \in \mathbb{N}$. For $s \in S(\overline{\mathbb{Q}})$, write $X_s$ for the fibre. We define the corresponding loci in $S$ by
\begin{align*}
S_{P,j,\delta} \; &:=\; \{\, s \in S(\overline{\mathbb{Q}}) \mid \mathrm{P}(X_s,j,\delta) \,\},\\
S_{\mathrm{H},j} \; &:=\; \{\, s \in S(\overline{\mathbb{Q}}) \mid \mathrm{H}(X_s,j) \,\}, \\
S_{\mathrm{T},\ell,j} \; &:=\; \{\, s \in S(\overline{\mathbb{Q}}) \mid \mathrm{T}(X_s,\ell,j) \,\}, \\
S_{\mathrm{MT},\ell,j} \; &:=\; \{\, s \in S(\overline{\mathbb{Q}}) \mid \mathrm{MT}(X_s,\ell,j) \,\}, 
\end{align*}
and the locus where all the properties hold simultaneously by (for fixed $j,\ell, \delta$):
\begin{equation}
\label{Smotdef}
S_{\operatorname{Motivic}, j,\ell, \delta} \;:=\;  S_{P,j,\delta}  \;\cap\;S_{\mathrm{H},j} \;\cap\; S_{\mathrm{T},\ell,j} \;\cap\; S_{\mathrm{MT},\ell,j}.
\end{equation}
This is the locus where the (tannakian versions of the) Hodge and  Tate conjectures hold, as well as the $\ell$-adic Mumford-Tate conjecture in cohomological degree $j$, and the Grothendieck period conjecture up to degree $\delta$ and cohomological degree $j$. We refer to it as the \emph{(arithmetic) motivic locus of $f$} (in degree $j$). Conjecturally $S_{\operatorname{Motivic}, j,\ell, d}= S(\Qbar) $, and we are interested in weaker results that say that such a locus is \emph{big} in $S$. The first representative result is the following:

\begin{thm}\label{mainthmhyper}
Let $n$ and $d$ be natural numbers with $n \geq 1, d\geq 3$, and $S=U_{n, d}$ be the ($\mathbb{Q}$-algebraic) parameter space of smooth hypersurfaces of
$\mathbb{P}^{n+1}$ of degree $d$. For any $\ell$ and $\delta$ the locus 
$$S_{\operatorname{Motivic}, n,\ell, \delta}\subset S(\Qbar)$$
is analytically dense in $S(\C)$
\end{thm}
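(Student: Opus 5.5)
Our plan is to deduce \Cref{mainthmhyper} from the general density result for $\Qbar$-Hodge generic points (\Cref{mainhodgegenthm}) and its refinement for periods (\Cref{mainthmonperiod}), and then to exploit the special features of the universal family of hypersurfaces over $S=U_{n,d}$. Let $f: X \to S$ be the universal smooth hypersurface, and consider the variation of Hodge structure $\V = R^n f_* \Q$ (primitive part). Its algebraic monodromy group is classical and as large as possible: by Deligne's work on Lefschetz pencils, the monodromy is Zariski dense in $\mathrm{Sp}$ or $\mathrm{O}$ of the primitive cohomology, according to the parity of $n$; the cases $d \geq 3$ exclude the degenerate situations. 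Hence the generic Mumford--Tate group is the full $\GSp$ or $\mathrm{GO}$ of the polarization. This gives exactly the large monodromy hypothesis needed for \Cref{mainthmonperiod}. Consequently, for each fixed $\delta$, the set of $s \in S(\Qbar)$ that are Hodge generic and for which the periods of $X_s$ satisfy no relations up to degree $\delta$ beyond those forced by the polarization is analytically dense. This gives density of a subset of $S_{P,n,\delta}$.

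Next we show that Hodge genericity at such $s$ forces the other three properties. If $s$ is Hodge generic, then $\MT(X_s)$ is the full group $G=\GSp$ or $\mathrm{GO}$. The Hodge classes in the tensor spaces $H^{\otimes a}\otimes H^{\vee \otimes b}(c)$ are then the $G$-invariants. By classical invariant theory, these are generated by the polarization form and its powers (together with, in the orthogonal case, possibly a determinant-type class, which we must treat separately). The polarization is algebraic, being cup product with a power of the hyperplane class. Hence $\mathrm{H}(X_s,n)$ holds. For $\ell$-adic cohomology, the Zariski closure $G_\ell$ of the Galois image is contained in the group preserving the algebraic classes, so $G_\ell^\circ \subseteq G_{\Q_\ell}$. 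The reverse inclusion is the delicate point. Here we use that the $\ell$-adic monodromy of the family is also large and that, by a Hilbert irreducibility argument, the Galois image at $s$ should contain an open subgroup of the geometric monodromy. Combined with the known results on the Mumford--Tate conjecture for such orthogonal and symplectic situations, this would give $\mathrm{MT}(X_s,\ell,n)$. Then $\mathrm{T}$ follows from $\mathrm{H}$ and $\mathrm{MT}$, using the remark after the definitions.

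The main obstacle is the compatibility of the three density statements. Hilbert irreducibility yields a thin-set complement, whereas \Cref{mainhodgegenthm} yields density, and a dense set could a priori lie inside a thin set. We therefore plan to avoid Hilbert irreducibility and to derive $\mathrm{MT}$ directly from Hodge genericity. The approach we would try first is to invoke Pink's theorem, or the Larsen--Pink results, for the relevant minuscule-type representations: if $\MT(X_s)$ is $\GSp$ or $\mathrm{GO}$ in its standard representation, then $G_\ell^\circ$ is reductive, contains the homotheties, and has the same rank. One then shows it has the same formal character, which forces equality, as in known cases for K3-type and symplectic Hodge structures. Finally, the set where all four properties hold contains the dense set produced in the first step, which proves the density of $S_{\operatorname{Motivic},n,\ell,\delta}$.
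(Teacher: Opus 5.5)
There is a genuine gap at the Mumford--Tate step, and it is exactly where your plan departs from the paper.

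You correctly diagnose the difficulty. Serre's Hilbert irreducibility gives full Galois image only outside a thin set, and a merely analytically dense set of Hodge generic points could a priori lie inside that thin set. But your proposed fix, deducing $\mathrm{MT}(X_s,\ell,n)$ from Hodge genericity alone via Pink or Larsen--Pink, is not available.

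\begin{itemize}
\item Pink's results concern abelian varieties. They use Faltings' theorems and the fact that the Hodge--Tate cocharacter is weakly minuscule.
\item For $H^n$ of a hypersurface the Hodge--Tate weights are in general spread over $0,\dots,n$.
\item None of the inputs you list is known in this setting: reductivity of $G_\ell^\circ$ (i.e.\ semisimplicity of $H^n_{\textrm{\'et}}$), irreducibility, equality of rank with $\MT(X_s)$, or equality of formal characters.
\item The Larsen--Pink invariant-dimension criterion needs the dimensions of $G_\ell$-invariants in all tensor spaces. That is precisely the Tate-class information you are trying to obtain.
\end{itemize}

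So ``Hodge generic $\Rightarrow$ Mumford--Tate'' for hypersurfaces is essentially the open conjecture itself; the paper stresses that little is known outside abelian motives. Consequently your last step, that the dense set coming from \Cref{mainthmonperiod} already satisfies all four properties, does not go through.

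The missing idea is to keep Hilbert irreducibility but upgrade density to \emph{natural density one}.

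\begin{enumerate}
\item Restrict to a Hodge generic rational curve $\PP^1-\Sigma$ over $K$ (\Cref{genRlem}) with parameter $s=1/t$.
\item By \Cref{thmwithproof}(3), for every $\nu$ at least $\nu$ of the points $[t:1],[t+D:1],\dots,[t+(n^2+\nu-1)D:1]$ carry no non-trivial period relation of degree $\le\delta$.
\item Taking $\delta$ large enough (via \Cref{finitelymanyNLequivlem}), the integers $t$ with $[t:1]$ Hodge generic have natural density one (\Cref{quantitativeHodgegen}). The same holds for $\N_{P,\delta}$.
\item A thin subset of $\PP^1(K)$ contains only $O(T^{1/2})$ integers $t\le T$, so it has density zero.
\item Intersecting these finitely many density-one sets gives integers $t\to\infty$ at which $[t:1]$ is simultaneously Hodge generic, lies in $S_{P,n,\delta}$, and lies outside the thin set. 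Hence $\rho\circ\sigma_t$ has the same image as $\rho$.
\item The Zariski closure of that image contains the geometric monodromy, which Artin comparison identifies with the Betti monodromy $\mathbf{M}$. Then \eqref{bigmon} identifies it with $\MT(X_t)$ up to homotheties.
\end{enumerate}

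Your deduction of $\mathrm{H}$ by invariant theory and of $\mathrm{T}$ from $\mathrm{H}+\mathrm{MT}$ then follow the paper's route. Varying the curve gives density in $U_{n,d}(\C)$.
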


A more precise version can be given for any \emph{Lefschetz pencil} of hypersurfaces, cf. \cite[Exposé XVII]{zbMATH03407568} for an introduction.

\begin{thm}\label{thmLefschetzpen}
Let $f : X \to \mathbb{P}^1$ be a Lefschetz pencil (smooth outside the singular locus $\Sigma \subset \mathbb{P}^1$) of hypersurfaces of dimension $n$ and degree $d$, defined over a number field $K$. Then for any positive integers $\delta, \ell > 0$ the set
\[ \{ t \in \mathbb{N} : [t : 1] \in (\mathbb{P}^1 \setminus \Sigma)_{\operatorname{Motivic},n,\ell,\delta} \} \]
has natural density one in $\mathbb{N}$.
\end{thm}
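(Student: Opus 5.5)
The plan is to deduce \Cref{thmLefschetzpen} from the paper's quantitative result for curves, together with the standard monodromy computation for Lefschetz pencils. Here $S=\mathbb{P}^1\setminus\Sigma$ is a curve defined over $K$. The variation of Hodge structure $R^nf_*\mathbb{Q}$, restricted to its vanishing part, has large monodromy: by the Picard--Lefschetz formula and Deligne's irreducibility theorem \cite[Exposé XVII]{zbMATH03407568}, the Zariski closure of monodromy on vanishing cohomology is the full $\mathrm{Sp}$ or $\mathrm{O}$ of the polarization. The orthogonal case arises for $n$ even; when $n$ is even and the vanishing cycles are not all orthogonal to the invariant part, one splits off the fixed part, which comes from the ambient $\mathbb{P}^{n+1}$ and so is of Tate type. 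Consequently the algebraic monodromy group equals the generic Mumford--Tate group (derived part). This is exactly the large monodromy hypothesis under which the paper's period-conjecture result (\Cref{mainthmonperiod}) and its curve/quantitative version apply.

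\textbf{Step 1: apply the quantitative curve result.} The Gauss--Manin connection on $H^n_{dR}$ of the pencil is a $G$-operator, since it is of geometric origin. The paper's main technical theorem on relations among solutions of $G$-operators (\autoref{thmwithproof}), specialized to curves, should give the following. Among the algebraic points of height at most $H$ and degree at most $D$ (here the integers $t\le N$), the points where the periods satisfy an extra relation of degree at most $\delta$, or where the Mumford--Tate group drops, are few: $o(N)$, indeed polynomially fewer than $N$. The idea is that each such exceptional $t$ forces a nontrivial algebraic relation among values of $G$-function solutions at an integer point. The Bombieri--André height estimates bound the number of such points of bounded height by $N^{1-\epsilon}$, or at least by $o(N)$. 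I would use the monodromy statement to verify that the relations in question are not already satisfied generically, so that the estimate applies. The points $[t:1]$ with $t\in\Sigma$ are finite in number and do not affect density.

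\textbf{Step 2: from period and Hodge genericity to the full motivic locus.} For $t$ outside the exceptional set, $\mathrm{P}(X_t,n,\delta)$ holds by construction. Hodge genericity gives $\mathrm{MT}(X_t)=$ the generic group $\mathrm{GSp}$ or $\mathrm{GO}$. The tensor invariants of this group are generated by the polarization and the Tate-type part. Both are algebraic: the polarization is given by the cup product with powers of the hyperplane class. Hence $\mathrm{H}(X_t,n)$ holds. For the $\ell$-adic side, I would show that the image of Galois is open in the same $\mathrm{GSp}/\mathrm{GO}$, again outside a density-zero set. Here the arithmetic monodromy contains the geometric monodromy, and Hilbert irreducibility in its quantitative form (Cohen/Serre: specializations at integers $t\le N$ failing maximality number $O(N^{1/2}\log N)$) gives a big image at density-one $t$. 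Then $\mathrm{MT}(X_t,\ell,n)$ follows, since the $\ell$-adic image is contained in the Mumford--Tate group (Deligne's absolute Hodge for... or simply because the image is in the group of the polarization) and is large. $\mathrm{T}$ then follows, as any two of the three properties imply the third. Taking the finitely many prime divisors of $\ell$ and intersecting density-one sets preserves density one.

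\textbf{Main obstacle.} The hard part is Step 1: making the $G$-operator relation theorem effective enough to count integer points $t\le N$. One needs uniformity in $t$ of the local radii of convergence and of the height bounds, since the points are far from a fixed base point. I would first try working with the local expansions at $\infty$ in the coordinate $1/t$, where integers become points of small absolute value. There the André--Bombieri estimate controls relations of bounded degree $\delta$ uniformly, yielding $\ll N^{1-\epsilon(\delta)}$ exceptional $t$.
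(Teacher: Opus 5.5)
You have a genuine gap in Step 1, at exactly the point the paper's main new idea is designed to handle. Your Step 2 is essentially the paper's argument: Hodge genericity plus Weyl invariant theory for $\mathrm{H}$, Serre's thin-set result with the $O(N^{1/2})$ count for the $\ell$-adic image, and $\mathrm{T}$ from the other two.

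The problem is your claim that "each exceptional $t$ forces a nontrivial algebraic relation among values of $G$-function solutions". This is false as stated. One has $\mathbf{P}=\mathbf{G}_{\log}\cdot Q$, where $Q\in\GL_n(\mathbb{C})$ is a constant matrix of (in general transcendental) periods. If $\infty\in\Sigma$, the entries of $\mathbf{G}_{\log}(1/t)$ also involve $\log t$. So a $K$-relation of degree $\delta$ on $\mathbf{P}(1/t)$ only becomes a relation on $G$-function values with coefficients in $K(Q,\log t)$. Theorem~\ref{thm:bombieri} does not apply to such a relation; this is precisely the obstruction described in \S\ref{strsec}.

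Bombieri--Andr\'e is also not a counting result. If your reduction were valid, it would show that \emph{every} sufficiently large $t$ is good, far more than the paper obtains, so the claimed $N^{1-\epsilon}$ bound has no source. The obstacle you single out, uniformity of radii of convergence, is not an issue, since the points $1/t$ tend to the expansion point.

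The missing idea is the product trick behind Theorem~\ref{thmwithproof}.
\begin{enumerate}
\item One supposes that non-trivial relations hold simultaneously at $N$ points $t,t+D,\dots,t+(N-1)D$, with $N$ of size about $n^2$ (exceeding $\dim B_Q$).
\item These points lie on the image of $x\mapsto(x,x+D,\dots,x+(N-1)D)$ in $(\mathbb{P}^1)^{N}$. By Lemma~\ref{lemmashift} this curve has full monodromy, so the joint solutions have Zariski closure the full fibre-product torsor $\mathcal{B}_{\mathbf{J}}$.
\item Lemma~\ref{deeplemma} shows that the $N$ relations cut out a locus of codimension at least $N-\dim B_Q$. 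Hence $Q$ can be eliminated.
\item What remains is a non-trivial $K$-relation of bounded degree among the $G$-functions $\mathbf{H}(1/t)$. The logarithms are removed via $E_j=(T(s)T(a_j(s)))^{e}\log(1+jDs)$. Bombieri--Andr\'e rules this out for $t\gg0$.
\end{enumerate}

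What this yields is Theorem~\ref{thmwithproof}(3): for $t>\kappa(\nu)$, at least $\nu$ of the $n^2+\nu$ points of each such progression are good. Natural density one then follows by letting $\nu\to\infty$, with no power saving.

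For the Hodge-generic part you also need a uniform bound on the degree of the relations forced by a drop of the Mumford--Tate group. This comes from Lemmas~\ref{lem:chev} and~\ref{finitelymanyNLequivlem}, and your proposal takes it for granted.
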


More generally, the conclusion of \autoref{thmLefschetzpen} also applies to any pencil of hypersurfaces in $\mathbb{P}^{n+1}$ with \emph{large monodromy}. For example, one obtains the same conclusion for the \emph{Dwork family} of hypersurfaces of degree $d=n+2$ in $\mathbb{P}^{n+1}$ (c.f. \cite[Thm. 8.6]{zbMATH05778253} which also includes other related examples)
\[
X = \left\{ (\mathbf{x}, t) \in \mathbb{P}^{n+1} \times \mathbb{P}^1 : \sum_{i=0}^{n+1} x_i^d = t d \prod_{i=0}^{n+1} x_i \right\} .
\]

\subsection{Algebraic Hodge generic points}
\label{Hodgegenptssec}
Let $S$ be a smooth complex algebraic variety. Given a graded polarizable (rational) variation of mixed Hodge structures (referred to as a $\Q$VHS from now on) $\V$ on $S$ there is a notion of \emph{Mumford-Tate group of $\V$} as well as a \emph{Mumford-Tate group at each $s\in S(\C)$}, which we denote, respectively, by
\begin{displaymath}
\mathbf{G}=\MT(\V) \\ \ \ \text{  and } \ \ \ \     \mathbf{G}_s=\MT(\V_s);
\end{displaymath}
see \Cref{MTdef} and \cite{2021arXiv210708838B} for more details. A point $s\in S(\C)$ is said to be \emph{Hodge generic} for $(S,\V)$ if its Mumford-Tate group is ``as big as possible'', i.e., if $ \mathbf{G}_s= \mathbf{G}$. In \Cref{sec5} we give an equivalent description of this property using period maps and period domains.

All $\mathbb{Q}$VHSs appearing in our paper will be derived from the natural $\mathbb{Q}$VHSs $R^jf_*\Q$ through some combination of passing to primitive cohomology, taking direct sums, tensor powers, and extensions. (For an introduction to the \emph{primitive cohomology} we refer to Voisin's book \cite{zbMATH05221670}. Given a VHS $\V$ arising in the cohomology of a smooth projective family, its primitive part will be denoted $\V_{\operatorname{prim}}$.). We refer to \S \ref{setting} for more details (especially about the mixed case) and the notion of $K$-geometric VHS.

Since Hodge theory is at its core a transcendental theory, understanding its properties over number fields can be challenging. The main result of the paper is the following, which was previously known only in the pure case and under the absolute Hodge conjecture, cf. \cite[Thm. 1.11]{zbMATH06492665}. (See also \Cref{relworksec} for a more detailed discussion of related work.)
\begin{thm}
\label{mainhodgegenthm}
Let $f : X \to S$ be a quasi-projective family defined over a number field $K$. Fix an integer $j \geq 0$, and set $\V = R^{j} f_{*} \mathbb{Q}$ to be the associated graded polarizable $\mathbb{Q}$VHS (see \S\ref{VHSconstrsec}). Then the locus of Hodge generic points in $S(\Qbar)$ for $\V$ is analytically dense in $S(\mathbb{C})$. Put differently, the locus of points in $S(\Qbar)$ that do not lie in the (tensorial) Hodge locus of $\V$ is analytically dense in $S(\mathbb{C})$.
\end{thm}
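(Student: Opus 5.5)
Reduce to curves, then apply the $G$-operator result (\autoref{thmwithproof}) to the period functions of $\V$ along each curve. Fix a point $s_0 \in S(\C)$ and a small analytic neighbourhood $U$ of it. We want a $\Qbar$-point in $U$ that is Hodge generic.

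\emph{Curves through $U$.} Choose a smooth curve $C \subset S$ defined over $\Qbar$ that meets $U$ and whose very general point is Hodge generic for $\V$. Such a curve exists because the Hodge locus is a countable union of strict algebraic subvarieties (Cattani--Deligne--Kaplan). A general complete-intersection curve through a $\Qbar$-point near $s_0$ then avoids containment in each of these countably many components. Moreover, by a Lefschetz-type theorem for fundamental groups, the algebraic monodromy group of $\V|_C$ equals that of $\V$. Hence the generic Mumford--Tate group of $\V|_C$ is still $\mathbf{G}$, and we may replace $S$ by $C$.

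\emph{Setup on the curve.} Now $S$ is a curve over a number field $K$. The Gauss--Manin connection on the algebraic de Rham bundle $H^j_{dR}(X/S)$ is defined over $K$. Choose a local parameter $t$ at a $K$-point $s_1 \in U \cap S(\Qbar)$. By Katz/André, the Gauss--Manin system is a $G$-operator. Its local solution matrix $Y(t)$ (the periods with respect to algebraic de Rham and flat Betti frames) therefore has $G$-function entries up to an algebraic change of basis. For $s$ near $s_1$, the Mumford--Tate group $\mathbf{G}_s$ is the stabiliser of the Hodge tensors at $s$. So $s$ fails to be Hodge generic exactly when some rational tensor $v$, in some $\V^{\otimes a}\otimes\V^{\vee\otimes b}(c)$ and not $\mathbf{G}$-invariant, is Hodge at $s$. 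That condition says the period coordinates of the flat section through $v$ land in $F^0$. Equivalently, an explicit polynomial relation of bounded degree in the entries of $Y(s)$, with $\Qbar$-coefficients coming from the algebraic Hodge filtration, holds at $s$ without holding identically.

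\emph{Counting.} \autoref{thmwithproof} should say that for algebraic $s$ of bounded degree and height close to $s_1$, the values of $G$-function solutions satisfy no non-trivial polynomial relations of degree $\le \delta$ beyond those forced by the generic relations (the monodromy/differential Galois relations), except for a sparse exceptional set. The key point is that $\mathbf{G}$ is cut out by finitely many tensors of bounded degree, by Chevalley. So only finitely many $(a,b,c)$ need to be controlled, and hence a single degree bound $\delta$ suffices. A point where the degree-$\le\delta$ relations among the periods are the generic ones then has $\mathbf{G}_s \supseteq$ the group fixing those tensors, which is $\mathbf{G}$. Points of $\Q(t)$ of small height near $s_1$ (for example $s_1 + 1/N$ for large $N$, lying in $U$) are then Hodge generic outside a density-zero subset, in particular for some such point. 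For the mixed case, one argues similarly using the weight filtration and the graded-polarizable structure.

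\emph{Main obstacle.} The hardest step is the middle one: turning "Hodge at $s$" into a relation among $G$-function values with $\Qbar$-coefficients of controlled height. The difficulty is that the Hodge condition involves the rational flat structure. I would use the fact that the Hodge filtration and the de Rham lattice are defined over $K$. Then the condition becomes the vanishing of $\Qbar$-linear forms in monomials of $Y(s)$ whose coefficients have uniformly bounded height, independent of $s$. A second point to check is that the non-Hodge-generic locus does not come entirely from relations of unbounded degree. This is exactly where the Chevalley bound on the tensors is needed.
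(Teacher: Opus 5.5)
\textbf{Gap: the uniform degree bound.} Your outline matches the paper's: find a Hodge generic curve by a Lefschetz/monodromy argument, apply \autoref{thmwithproof}, and show that ``not Hodge generic'' forces a non-trivial relation of uniformly bounded degree. However, the uniformity step is not justified, and it is the heart of the Hodge-theoretic half.

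Chevalley applied to $\mathbf{G}$ only says that $\mathbf{G}$ is cut out by tensors of degree $\le d_0$. What you need is different: \emph{whenever} $\mathbf{G}_s\subsetneq\mathbf{G}$, the drop must already be witnessed by an extra Hodge tensor (a Hodge line, in the mixed case) of degree $\le d$, with $d$ independent of $s$.

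Your sentence ``a point where the degree-$\le\delta$ relations are the generic ones has $\mathbf{G}_s\supseteq$ the group fixing those tensors'' has the inclusion backwards. $\mathbf{G}_s$ fixes \emph{all} Hodge tensors at $s$. So the absence of extra ones in degree $\le\delta$ only gives the vacuous $\mathbf{G}_s\subseteq\mathbf{G}$, and an extra Hodge tensor in degree $>\delta$ is not excluded.

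Chevalley alone gives no uniformity over infinitely many subgroups. For example, the subtori $\{(t,t^N)\}\subset\mathbb{G}_m^2$ are only detected in tensor degree growing with $N$. You also cannot let $\delta\to\infty$, since \autoref{thmwithproof} produces different points for different $\delta$.

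The missing input is Deligne's finiteness of Mumford--Tate subgroups of $\GL(V)$ with given Hodge numbers up to conjugation. In the mixed case this is extended by parametrizing all possible groups in finitely many algebraic families (Lemma~\ref{MTgroupfamlem}). The Chevalley degree is then a constructible function on a finite-type base, hence bounded (Lemma~\ref{lem:chev}). This yields the finite-type family of sub-Mumford--Tate domains of Lemma~\ref{finitelymanyNLequivlem}, whose defining equations have uniformly bounded degree.

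\textbf{Secondary points.}

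\emph{Non-triviality.} The relation must be non-trivial in the sense of \autoref{def:nontrivial}: it must not vanish on any $K$-component of $B_{\mathbf{P},s}$ through $\mathbf{P}(s)$. ``Not holding identically'' along the curve is a different condition. The paper gets the right one from the fact that these sub-Mumford--Tate loci meet the image of $M_{\mathbf{P},s}$ in $\ch{D}$ (the monodromy orbit $\ch{D}^0$) properly.

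\emph{Your ``main obstacle'' is a non-issue.} \autoref{thm:bombieri}, hence \autoref{thmwithproof}, rules out bounded-degree relations with \emph{arbitrary} coefficients in $K$. So no height control on coefficients is needed, and none is available anyway, since the rational tensor $v$ has unbounded height.

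\emph{Periods are not $G$-functions up to an algebraic change of basis.} Rather, $\mathbf{P}=\mathbf{G}_{\operatorname{log}}Q$ with $Q$ a transcendental constant matrix. Eliminating $Q$ is exactly what the $N$-fold product argument inside \autoref{thmwithproof} does, so using that theorem as a black box is fine, but your paraphrase of it is not accurate.

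\emph{The curve must be rational.} \autoref{thmwithproof} concerns points $[t:1]$ of an open subset of $\mathbb{P}^1$. So your curve should be rational, or $\V$ should be pushed forward along a finite map to $\mathbb{P}^1$. The paper arranges this by making $S$ finite \'etale over an open subset of $\mathbb{A}^\ell$, passing to the pushforward VHS, and using lines.

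\emph{Choice of curve.} Your countability argument for a ``general'' $\Qbar$-curve does not work as stated, but the Lefschetz/monodromy argument you give next does.
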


\begin{rem}
As a corollary of \Cref{mainhodgegenthm} one immediately gets the following. For infinitely many $a,b\in \N$, the group $\mathbf{G}=\operatorname{GSO}(2a,b)$ is the Mumford-Tate group of the degree two primitive cohomology of a projective surface defined over $\Qbar$. In particular, this holds for all $a$ (resp. $b$) that can be realized as $h^{2,0}$ (resp. $h^{1,1}-1$) of a smooth complete intersection surface. For $a>1$ and $b>2$, $\mathbf{G}$ is not of Hermitian type, hence not associated to any Shimura variety, so we believe that this result was not known before for such $a$ and $b$.  
\end{rem}

\noindent In fact, in \Cref{generalHggenthm2}, we will show the existence of some integer $d$ such that \autoref{mainhodgegenthm} holds even if we limit ourselves to just those points $s \in S(\Qbar)$ whose field of definition\footnote{Throughout the paper we write $\kappa(s)$ to denote the minimal extension of $K$ inside $\Qbar$ over which a given point $s \in S(\Qbar)$ is defined.} $\kappa(s)$ satisfies the bound $[\kappa(s) : K] \leq d$. We also give a \emph{density one} result in the spirit of \autoref{thmLefschetzpen} when $S$ is a Zariski open of $\PP^1$, cf. \Cref{quantitativeHodgegen}. 

A related question is whether one can produce points in $S(\Qbar)$ that are certifiably Hodge generic -- that is, whether one can make the density of \autoref{mainhodgegenthm} effective. This seems to be beyond the reach of our method. However, we can prove the following result, which effectively produces a subset of points, ``99 percent'' of which are Hodge generic. For a more precise statement, see \autoref{effcor}. 
\begin{thm}\label{effectivethmintro}
In the setting of \autoref{mainhodgegenthm} there is, for each $\alpha \in [0, 1) \cap \mathbb{Q}$, an effective method which produces a finite subset $\mathcal{S} \subset S(\Qbar)$ such that at least an $\alpha$-fraction of the points in $\mathcal{S}$ is Hodge generic.
\end{thm}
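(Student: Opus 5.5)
Given $\alpha$, the plan is to produce $\mathcal{S}$ from a one-parameter algebraic curve in $S$. We then bound, effectively, the number of its points of bounded height and degree that lie in the Hodge locus, and compare this with the total number of points we pick.

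\textbf{Reduction to a curve.} First we reduce to the case where $S$ is a curve. Choose, effectively, a $K$-rational curve $C \to S$ (for instance a generic line in a projective embedding, restricted to $S$) whose image is Hodge generic. By the irreducibility of monodromy restricted to generic curves, a Lefschetz/Bertini type argument, the algebraic monodromy group of $\V|_C$ equals that of $\V$, and hence $\MT(\V|_C)=\mathbf{G}$. Pulling back, we may assume $S \subset \mathbb{P}^1$ is Zariski open with a coordinate $t$.

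\textbf{Effective density-one statement.} Next we invoke the quantitative form of the main result, \Cref{quantitativeHodgegen} (stated later, resting on \autoref{thmwithproof}). Its content should be: among $t\in\{1,\dots,N\}$ (or algebraic points of bounded height and degree) with $t\in S$, the number lying in the Hodge locus is $O(N^{1-\epsilon})$, or at least $o(N)$, and the implied constants are effectively computable. These constants come from the Bombieri–André height estimates for $G$-operators: the Gauss–Manin connection on $\V$ is a $G$-operator whose Galois/size invariants are computable from the defining equations of $f$. The mechanism is as follows. A point in the Hodge locus forces an algebraic relation of bounded degree among the period solutions at $t$, and the number of such points is bounded through a Siegel-lemma/auxiliary polynomial argument with explicit constants.

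Concretely, we need an explicit $N_0(\alpha)$ such that for $N\ge N_0$ the exceptional count is at most $(1-\alpha)\cdot\#\{t\le N : t\in S\}$. We then set $\mathcal{S}=\{[t:1] : t\le N_0,\ t\notin\Sigma\}$. Since only finitely many Hodge-locus components have bounded complexity, we should be able to bound the degree $\delta$ of the tensor relations we must rule out. To do this we use an effective bound on the level of tensor construction needed to cut out $\mathbf{G}$ inside $\GL(V)$ (Chevalley's theorem with the degrees bounded in terms of $\dim V$), so that only finitely many tensor spaces matter.

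\textbf{Main obstacle.} The main obstacle is effectivity of the constants in \Cref{quantitativeHodgegen}. The non-effective inputs are twofold. One is the Mumford–Tate group $\mathbf{G}$ of $\V$ itself, since we do not know in advance which tensors are Hodge generically. The other is any Zariski-closure or o-minimal input. I would avoid needing $\mathbf{G}$ exactly by counting points where the tensor Hodge classes jump relative to the generic fibre. This is detected by algebraic relations among periods of degree $\le\delta$ that do not hold identically, and the Bombieri–André estimates bound these uniformly in terms of $\delta$, the size of the $G$-operator, and the degree of the relation, all of which are computable.
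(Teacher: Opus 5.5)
Your reduction to a Hodge generic $K$-rational curve and your uniform degree bound $\delta$ (Chevalley, finitely many families of Mumford--Tate subgroups) match the paper. The core step, however, has a genuine gap. You assert that a single point $t$ in the Hodge locus ``forces an algebraic relation of bounded degree among the period solutions'' to which Bombieri--Andr\'e applies, and that this bounds the number of exceptional $t\le N$.

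A non-generic $t$ does give a $\Q$-algebraic relation of bounded degree on $\mathbf{P}$ at $s=1/t$ (via \Cref{finitelymanyNLequivlem}). But the entries of $\mathbf{P}$ are not $G$-functions: $\mathbf{P}=\mathbf{G}_{\operatorname{log}}\cdot Q$, with $Q\in\GL_n(\C)$ a constant matrix of transcendental periods at the base point. Written in terms of the $G$-function values $\mathbf{G}_{\operatorname{log}}(1/t)$, your relation has coefficients depending on $Q$, hence not in $K$, and \Cref{thm:bombieri} says nothing about it. This is exactly the obstruction explained in \S\ref{strsec}. Moreover, \Cref{thm:bombieri} is pointwise (no non-trivial relation at points of large height that are $v$-adically small), not a counting result. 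So no Siegel-lemma argument yields the effective $O(N^{1-\epsilon})$ or $o(N)$ count you postulate. Also, \Cref{quantitativeHodgegen} as stated is a pure density-one statement with no effective constants you could import.

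The missing idea is eliminating $Q$ by treating many points simultaneously. Take the curve $x\mapsto(x,x+D,\dots)$ in $(\PP^1)^{N}$, which has full monodromy by \Cref{lemmashift}. Suppose $n^2+1$ of the points $[t+jD:1]$ all carry non-trivial relations of degree $\le\delta$. These relations all involve the same $Q$, and \Cref{deeplemma} (with $Y=B_Q$) eliminates it. The result is a genuine non-trivial $K$-relation of bounded degree on the values at $1/t$ of an auxiliary $G$-function vector $\mathbf{H}$, which includes the functions $E_j$ absorbing the logarithms. \Cref{thm:bombieri} rules this out once $t>\kappa$. This gives \Cref{thmwithproof}(3): for $t>\kappa$, at most $n^2$ of $[t:1],\dots,[t+(n^2+\nu-1)D:1]$ lie outside $\mathcal{R}_{\mathbf{P},\delta}$. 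For $\delta$ large, points of $\mathcal{R}_{\mathbf{P},\delta}$ are Hodge generic, as in the proof of \Cref{generalHggenthm2}.

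The statement then follows without any counting. Choose $\nu$ with $\alpha<\nu/(n^2+\nu+1)$, and let $\mathcal{S}$ be the corresponding block of the progression for a single $t>\kappa$. Here $D$ is effective by \Cref{lemmashift}, and $\kappa$ is effective because the constants $c_1,c_2$ of \Cref{thm:bombieri} and the degree bound are. With this window statement in hand, your initial segment $\{t\le N_0\}$ would also work. Without it, your proposal has no mechanism producing the $\alpha$-fraction.
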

\noindent This can in particular be applied in the context of \autoref{thmLefschetzpen} to produce candidate hypersurfaces which are Hodge-generic. We note that we are not aware of any methods for certifying that a given hypersurface in $\mathbb{P}^{n+1}$ is Hodge generic.

In a similar spirit to the above remark, we give another corollary which might be of independent interest. Recall that every smooth projective complex variety $Y$ has an associated \emph{Hodge diamond}, which is the collection of numbers $h^{p,q}= \dim H^p(Y,\Omega_Y^q)$; similarly there is the \emph{primitive Hodge diamond}, which is the collection of Hodge numbers appearing in the primitive parts in each degree. We say that a primitive Hodge diamond is \emph{irreducible} if the Hodge structure associated to each primitive cohomology group $H^j(Y,\Q)_{\operatorname{prim}}$ is irreducible.

\begin{cor}
Every irreducible primitive Hodge diamond realizable by a smooth projective complex variety is realizable by a smooth projective $\Qbar$-variety.
\end{cor}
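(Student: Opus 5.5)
Let $Y_0$ be a smooth projective complex variety whose primitive Hodge diamond is irreducible. The plan is to deform $Y_0$ within a family defined over $\Qbar$, and then use \Cref{mainhodgegenthm} to find a $\Qbar$-fibre whose primitive Hodge structures are still irreducible. First I would spread $Y_0$ out. Choose a finitely generated $\Q$-algebra $A \subset \C$ over which $Y_0$, with a projective embedding, is defined. After shrinking $\operatorname{Spec} A$ we get a smooth projective morphism $f : X \to S$ with $S$ a smooth connected variety over a number field $K$. There is a point $s_0 \in S(\C)$ with $X_{s_0} \cong Y_0$, namely the point corresponding to the inclusion $A \subset \C$. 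The Hodge numbers of primitive cohomology are locally constant in smooth projective families. The relative hyperplane class is defined over $K$, so primitive cohomology makes sense in families. Hence every fibre $X_s$, and in particular every $s \in S(\Qbar)$, has the same primitive Hodge diamond as $Y_0$.

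Next, for each $j \le 2\dim Y_0$, consider the $\Q$VHS $\V^j = (R^j f_*\Q)_{\operatorname{prim}}$. Apply \Cref{mainhodgegenthm} to the family whose fibres are $X_s \times \cdots$; more simply, apply it to the $\Q$VHS $\V = \bigoplus_j R^j f_*\Q$. This direct sum arises from the disjoint union family, or from the version for direct sums and primitive parts allowed in \S\ref{setting}. The theorem gives a point $s \in S(\Qbar)$ that is Hodge generic for $\V$. Such a point is then also Hodge generic for each primitive summand. Indeed, the Mumford–Tate group of a summand is the image of $\MT(\V_s)$, and the generic Mumford–Tate group of a summand is the image of $\MT(\V)$. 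Applying the defining property to tensor constructions of $\V$, the Hodge tensors at $s$ are exactly the restrictions of generic Hodge tensors.

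The key step is that irreducibility is preserved at Hodge generic points. A sub-Hodge structure of $\V^j_s$ is a $\MT(\V^j_s)$-stable $\Q$-subspace. At a Hodge generic point $s$, the group $\MT(\V^j_s)$ equals the generic Mumford–Tate group $\mathbf{G}^j$ after parallel transport along a path. The group $\mathbf{G}^j$ is contained in the Mumford–Tate group at every point. In particular, under parallel transport to $s_0$ it lies in $\MT(\V^j_{s_0})$ up to conjugation by the transport. So a $\mathbf{G}^j$-stable subspace at $s$ transports to a subspace $W$ at $s_0$ that is stable under a subgroup of $\MT(\V^j_{s_0})$. We need the reverse implication, and for it I would use equality: $\MT(\V^j_{s})$ equals $\mathbf{G}^j$, which contains $\MT$ at every point. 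Hence a sub-Hodge structure at $s$, being $\mathbf{G}^j$-stable, is stable under $\MT(\V^j_{s_0})$ after transport. By irreducibility at $s_0$ it is therefore trivial. So $X_s$ is a smooth projective $\Qbar$-variety with the same primitive Hodge diamond, all of whose primitive Hodge structures are irreducible. If one only needs the diamond to be realized, the Hodge numbers step already suffices.

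The main obstacle is the group-theoretic containment $\MT(\V_{s_0}) \subseteq \mathbf{G}$ under transport, that is, that the generic Mumford–Tate group is maximal. This is standard: it follows from the description of the Hodge locus as a countable union of proper analytic subvarieties (\Cref{MTdef}, \S\ref{sec5}). Beyond this, one only needs to check that \Cref{mainhodgegenthm} applies to primitive parts and direct sums, which the paper's setting explicitly allows.
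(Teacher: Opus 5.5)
Your proof is correct and follows essentially the same route as the paper: spread $Y_0$ out to a smooth projective family over a number field, note that the primitive Hodge numbers are constant in the family, take a $\Qbar$-point that is Hodge generic for the direct sum over all degrees (the paper likewise points to the general $K$-geometric version, \autoref{everythinggeneralizesrem}, for this), and conclude because the generic Mumford--Tate group contains the transport of $\MT(\V^j_{s_0})$ and therefore acts irreducibly. One slip: your sentence saying $\mathbf{G}^j$ is contained in the Mumford--Tate group at every point has the inclusion backwards (the generic group contains each pointwise one, since generic Hodge tensors remain Hodge everywhere), but the inclusion $\MT(\V^j_{s_0}) \subseteq \mathbf{G}^j$ that your final argument actually uses is the correct one.
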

\begin{proof}
   Let $Y$ be a complex variety with irreducible primitive Hodge diamond. By spreading out it can be seen as a fibre of a smooth projective family of varieties defined over $\Qbar$. Each member of this family will have the same primitive Hodge diamond, and irreducibility is guaranteed by \Cref{mainhodgegenthm} (see also \autoref{everythinggeneralizesrem} for a version that applies in all cohomological degrees). Indeed, the generic Mumford-Tate group of the family we built can be larger than the one of $Y$, but it has to act irreducibly. The same applies then at every Hodge generic point.
\end{proof}

\subsection{Strategy: controlling period relations of bounded degree}
\label{strsec}

Our method for finding Hodge generic points ultimately reduces to the question of how to certifiably produce fibres of algebraic families whose periods do not satisfy exceptional $\Qbar$-algebraic relations of degree $\delta > 0$, where $\delta$ is some fixed integer. See indeed our discussion in \S\ref{periodconj}. 

For simplicity, we describe the strategy in the \emph{pure} case. Let $f : X \to S$ be a smooth projective family defined over a number field $K$, fix an integer $j \geq 0$, and set $\V = R^{j} f_{*} \mathbb{Q}$ to be the associated polarizable $\mathbb{Q}$VHS, which has rank $n \geq 0$. The task of finding a dense collection of Hodge generic $\Qbar$-points on $S$ for $\V$ reduces, by d\'evissage, to the case where the base has dimension one, and then to the case where $S \subset \mathbb{P}^1$ is a Zariski open subset. In this case, let $s$ be a rational parameter on $S$ that vanishes at a fixed point $s_{0} \in S(K)$. Let $\mathcal{H} = R^{j} f_{*} \Omega^{\bullet}_{X/S}$ be the de Rham realization of $\mathbb{V}$, and let $\nabla : \mathcal{H} \to \Omega^{1}_{S} \otimes \mathcal{H}$ be the Gauss-Manin connection. After shrinking $S$ we may assume that $\mathcal{H}$ is trivial with basis $\omega_{1}, \hdots, \omega_{n}$, and that $\Omega^{1}_{S}$ is trivialized by $ds$, with $s$ a parameter on $S$. Let $A(s)$ denote the associated connection matrix, with entries in the function field $K(s)$. 

Following the work of Andr\'e in \cite{zbMATH00041964}, the differential operator $d/ds - \mathbf{A}(s)$ is a $G$-operator; see \S\ref{Gopsec} and \S \ref{thmGoperator}. This means that any tuple $\mathbf{R}(s) = (G_{1}, \hdots, G_{n})^{t} \in K[[s]]^{n}$ of power series for which
\begin{equation}
\label{Gfunceq}
\frac{d}{ds} \mathbf{R}(s) = \mathbf{A}(s) \mathbf{R}(s)
\end{equation}
is a tuple of $G$-functions, cf. \Cref{defGfunc}.

\medskip

To use $G$-functions to study special moduli, one basic approach is as follows; for simplicity we assume $K = \mathbb{Q}$ and that $s_{0}$ is the point $\infty = [1 : 0] \in S$.\footnote{We note that in our main theorems we will even allow $s_0$ to be a point in $\mathbb{P}^1 \setminus S$, which means that one has to deal with possible quasi-unipotent monodromy. We ignore this here for simplicity.} Write $e_{1}, \hdots, e_{n}$ for the standard basis vectors of $\mathbb{Q}^{n}$. One starts by solving (\ref{Gfunceq}) with the initial conditions $\mathbf{R}(0) = e_{i}$ to obtain an $n \times n$ matrix $\mathbf{G}(s) = [\mathbf{G}_{ij}(s)]_{1 \leq i, j \leq n}$ of solutions. Viewing all data in the complex analytic setting, the matrix $\mathbf{G}(s)$ describes, in some analytic neighborhood $B$ of $s_{0}$, a varying change-of-basis matrix between the frame $\omega_{1}, \hdots, \omega_{n}$ and the unique flat frame that extends $\omega_{1,s_{0}}, \hdots, \omega_{n, s_{0}}$. Shrinking $B$ if necessary, we can consider a basis $b_{1,s_{0}}, \hdots, b_{n,s_{0}}$ for $\mathbb{V}_{s_{0}}$ and let $\mathbf{P}(0)$ be the de Rham-Betti period matrix at $s_{0}$ with respect to these bases. Then on $B$, one has a factorization
\begin{equation}
\label{Gfaceq}
\mathbf{G}(s) = \mathbf{P}(s) \mathbf{P}(0)^{-1} 
\end{equation}
where $\mathbf{P}(s)$ is the usual varying de Rham-Betti period matrix over $B$ (with respect to $\omega_{1}, \hdots, \omega_{m}$ and the flat extension $b_{1}, \hdots, b_{n}$ of $b_{1,s_{0}}, \hdots, b_{n,s_{0}}$). 

Then one wishes to control the points $s_{1} \in B \cap S(\mathbb{Q})$ where the fibre $X_{s_{1}}$ has extra Hodge cycles. A possible strategy, which was successfully implemented by Andr\'e (\cite[Ch. IX]{2025arXiv250109867A}, \cite[Thm. 1]{zbMATH00764346}) and is now well-known, is as follows:
\begin{itemize}
\item[(1)] Assume without loss of generality (by moving the point $s_0$) that the Hodge structure on $H^{j}(X_{s_{0}}, \mathbb{Q})$ carries extra Hodge tensors, so that the period matrix $\mathbf{P}(0)$ has lower transcendence degree than the matrix $\mathbf{P}(s)$ at a very general $s \in B$. Moreover choose $s_{1}$ so that it is close to $s_{0}$ at the unique archimedean place of $\mathbb{Q}$, and far from $s_{0}$ at each non-archimedean place.
\item[(2)] Consider a point $s_{1} \in B$ where the Hodge structure $H^{j}(X_{s_{1}}, \mathbb{Q})$ carries extra Hodge tensors, so that the transcendence degree of $\mathbf{P}(s_{1})$ also drops.
\item[(3)] Conclude from (\ref{Gfaceq}) that $\mathbf{G}(s_{1})$ has smaller transcendence degree than usual, and conclude from Bombieri's theorem \cite{zbMATH03721021} (or more precisely, \Cref{thm:bombieri}) that the heights of such $s_{1}$ are absolutely bounded. 
\end{itemize}
For our applications, this strategy has certain limitations. For a point $s \in B$, that the Hodge structure $H^{j}(X_{s}, \mathbb{Q})$ carries an extra Hodge tensor imposes only a weak constraint on the transcendence of $\mathbf{P}(s)$: although the Hodge conjecture would imply that $\mathbf{P}(s)$ lies inside a $\overline{\mathbb{Q}}$-algebraic torsor for the Mumford-Tate group of $H^{j}(X_{s}, \mathbb{Q})$, in the absence of the Hodge conjecture one only learns that the image of $\mathbf{P}(s)$ in a certain Hodge-theoretic flag variety is not $\overline{\mathbb{Q}}$-Zariski dense. This is true both when $s = s_{0}$ and $s = s_{1}$, but because this flag variety constraint is so weak, one cannot deduce the existence of any additional $\overline{\mathbb{Q}}$-algebraic relations on $\mathbf{G}(s_{1})$. In fact, even if one knew the Hodge conjecture, one would only be able to deduce the presence of additional $\Qbar$-algebraic relations when the Mumford-Tate groups at $s_0$ and $s_1$ are sufficiently small (roughly half the dimension of the generic Mumford-Tate group), and one could not control all special points on $S$ in this way.

\medskip

To avoid this issue, we adopt the following strategy that will culminate in \Cref{thmwithproof}. Fix an integer $N \gg 0$. We can replace the variation of Hodge structures $\mathbb{V}$ with the variation of Hodge structure $\mathbb{V}^{(N)}$ on the $N$-fold self-product 
\begin{equation}
\label{SNprod}
S^{N} := S \times \cdots \times S\hspace{4em} \mathbb{V}^{(N)} = \textrm{pr}^{*}_{1} \mathbb{V} \oplus \cdots \oplus \textrm{pr}^{*}_{N} \mathbb{V} .
\end{equation}
where $\textrm{pr}_{i} : S^{N} \to S$ is the obvious projection. Consider a generic rational curve $C \subset S^{N}$ passing through $0 := (s_{0}, \hdots, s_{0})$, and let $p_{i}$ be the restriction of $\textrm{pr}_{i}$ to $C$, which is a birational map. Fixing a uniformizing parameter $c$ on $C$ at $0$, and consider the associated open embedding $\iota : C \setminus \{ 0 \} \hookrightarrow \mathbb{A}^1$ given by $1/c$. We can then consider the $G$-function system $\mathbf{G}$ with components 
\begin{equation}
\label{Nfoldsystem}
\underbrace{\mathbf{G}_1(c)}_{(\mathbf{P} \circ p_{1})(c) \mathbf{P}(0)^{-1}}, \hdots, \underbrace{\mathbf{G}_N(c)}_{(\mathbf{P} \circ p_{N})(c) \mathbf{P}(0)^{-1}} .
\end{equation}
To produce Hodge generic points on $S$, it then suffices to consider the counter-factual situation where for some $c_{1} := (s^{1}, \hdots, s^{N})$ in $C(\mathbb{Q}) \cap \iota^{-1}(\mathbb{A}^1(\mathbb{Z}))$ \emph{all} entries of the tuple $c_{1}$ are not Hodge generic. Then instead of a mere one relation coming from the flag variety associated to $\mathbb{V}^{(N)}_{c_{1}}$ we will have at least $N$ of them, and if $N$ is larger than the transcendence degree of $\mathbf{P}(0)$ we can eliminate $\mathbf{P}(0)^{-1}$ to produce a non-trivial algebraic relation on the tuple $\mathbf{G}{_1}(c_{1}), \hdots, \mathbf{G}_{N}(c_{1})$, and then, using that $\iota(c_{1}) \in \mathbb{A}^1(\mathbb{Z})$, apply Bombieri's Theorem to control the height of $c_{1}$ for which such a relation appears.

We note that this strategy only works if the ``extra'' $\Q$-relations on $\mathbf{G}_{i}(c_{1})$ that appear are of uniformly bounded degree, and to show this is the case one needs non-trivial input from Hodge theory (in particular, here it becomes crucial that the Hodge structures we consider are polarizable). 

\medskip

Our approach takes inspiration from the Zilber-Pink conjecture for $\mathbb{Z}$VHS, as formulated in \cite{2021arXiv210708838B} for the pure case and \cite{2024arXiv240616628B} for the mixed one. Roughly, this conjecture predicts that how often a special point occurs inside $S(\mathbb{C})$ should be predicted by computing the codimension of (the image of) $S$ inside a space parameterizing Hodge-theoretic data associated to $\mathbb{V}$. Replacing $S$ with a (Hodge generic) curve $C$ in $S^{N}$ makes this codimension larger, and should therefore make points with all entries not Hodge generic less likely, and then (potentially) easier to control. The use of $G$-functions to study special moduli was initiated by Andr\'e in his book \cite{zbMATH00041964}, and recently there has been renewed interest in applying $G$-function-based methods in connection with the Zilber-Pink conjecture; we refer to \cite{2025arXiv250109867A} for a survey. Interestingly, the idea of considering a curve in a self-product of the base appears also in the work of Dèbes and Zannier \cite{zbMATH01096013} (especially Theorem 1 and 2 in \emph{op. cit.}), where, inspired by certain generalizations of Hilbert’s irreducibility theorem, they study relations among special values of $G$-functions.

\subsection{Plan of the paper}
In \Cref{section2} we describe in detail the motivic conjectures which appeared at the beginning of \S \ref{mmr} and give more general results in the spirit of \autoref{mainthmhyper}.

\Cref{sec3} and  \Cref{Gresult} are devoted to $G$-operators and $G$-functions. In  the former, we introduce the necessary background and in the latter state and prove the core result of the paper: \Cref{thmwithproof}.

In \Cref{sec5} and \Cref{sec6} Hodge theory enters the picture. After recalling the necessary background about the Hodge locus, in \S \ref{sec6} we prove the abundance of algebraic Hodge generic points and deduce from that the various announced applications. Finally in the short appendix \ref{app} we compare our results with an observation of Siegel.

\subsection*{Acknowledgments}
We thank Fran\c{c}ois Charles and Peter Jossen for comments on a first version of the paper and their interest in our work.

This work was done while the authors were at the Institute for Advanced Study in Princeton and they would like to thank the institute for its hospitality and for providing excellent working conditions. The first G.B. and D. U. are also grateful for the support of the Ambrose Monell Foundation as well as the Giorgio and Elena Petronio Fellowship Fund, and the second G.B. of the Marvin V. and Beverly J. Mielke Endowed Fund and the Infosys Member Fund.

The first G.B. was partially supported by the grant ANR-HoLoDiRibey of the Agence Nationale de la Recherche. The second G.B.
was supported by the European Union (ERC, SharpOS, 101087910), and by the Israel Science Foundation (grant No. 2067/23).

\section{Motivic conjectures: background and general results}\label{section2}

In addition to the results announced in the previous section and the general $G$-function \Cref{thmwithproof}, we wish to record here two more new statements, \Cref{galperiodconj} and \Cref{mainthmonMT}, that generalize \Cref{mainthmhyper} and will be proven in \S \ref{sec6}. For simplicity, we focus only on the ``pure case'' and return to the general ``mixed case'' in \S \ref{sec5} and \S \ref{sec6}.

\subsection{Hodge conjecture}

A fundamental question in algebraic geometry is to understand, given a smooth projective family $f : X \to S$, which fibres $X_{s}$ of $f$ carry more algebraic cycles than the geometric generic fibre $X_{\overline{\eta}}$ of $f$. Here by ``carry more algebraic cycles'' we mean that for some Weil cohomology theory $H^{\bullet}(-)$, and some integer $k$, the cohomology $H^{\bullet}(X^{k}_{s})$ (where $X^{k}$ denotes the $k$-fold self-fibre product) admits an algebraic cycle class which does not arise by specializing an algebraic cycle on $X_{\overline{\eta}}$; see also the introduction of \cite{2021arXiv210708838B}.

Several central conjectures in mathematics give precise criteria on when such extra cycles should exist. The first of these is the Hodge conjecture, which works with the Betti cohomology.

\begin{conj}(Hodge conjecture)
\label{Hgconj}
Let $Y$ be a smooth projective algebraic variety over $\mathbb{C}$, let $j \geq 0$ be an integer, and suppose that $c \in H^{2j}_{B}(Y, \mathbb{Q}(j))$ is a class which lies inside the middle Hodge summand $H^{j,j} \subset H^{2j}_{B}(Y, \mathbb{C})$. Then $c$ is in the $\mathbb{Q}$-span of the algebraic cycle classes.
\end{conj}

\noindent Classes $c$ as in \autoref{Hgconj} are called \emph{Hodge classes}. More generally, given a rational Hodge structure $V$, a \emph{Hodge tensor} is an element $v \in V^{\otimes a }\otimes V^{\vee \otimes b} \otimes \Q(c)$ which is of type $(0,0)$ for the Hodge decomposition on $V^{\otimes a }\otimes V^{\vee \otimes b} \otimes \Q(c)$. The Hodge conjecture says that Hodge tenors are $\mathbb{Q}$-linear combinations of algebraic cycle classes on a certain self-product of $Y$, twisted appropriately by the Tate motive. This also explains the notation $\mathrm{H}(Y,j$) appearing in the previous section. 

\begin{defn}
\label{MTdef}
Let $V$ be a $\mathbb{Q}$-Hodge structure, viewed as a representation $h : \mathbb{S} \to \GL(V)_{\mathbb{R}}$ of the Deligne torus $\mathbb{S} = \operatorname{Res}_{\mathbb{C}/\mathbb{R}} \mathbb{G}_{m,\mathbb{C}}$. Then the \emph{Mumford-Tate group} $\MT(V)$ of $V$ is the $\mathbb{Q}$-Zariski closure of the image of $h$. If $V$ is polarized, then this is the fixator of all Hodge tensors.
\end{defn}

Let $\V = R^{j} f_{*} \mathbb{Q}$ be the $\mathbb{Q}$VHS appearing in degree $j$ associated to $f$. As mentioned in \S\ref{Hodgegenptssec}, a Hodge generic point $s \in S(\mathbb{C})$ is a point where the Mumford-Tate group of $\mathbb{V}_{s}$ is smaller than for a very general $s$. Since every algebraic cycle class is a Hodge class, and Hodge classes constrain the size of the Mumford-Tate group, a Hodge generic point is necessarily a point at which there are no extra algebraic cycles. It is moreover known by the Cattani-Deligne-Kaplan theorem \cite{CDK} that the condition of carrying an extra Hodge class gives rise to a countable union of closed algebraic subschemes of $S$, called the \emph{Hodge locus}. But because a Hodge structure is a transcendental object constructed from periods, it is difficult to certify for any given point $s \in S(\mathbb{C})$ that $s$ does not lie in the Hodge locus. In particular if $f$ is defined over $\Qbar$, one would like to know that there are points in $S(\Qbar)$ that do not lie in the Hodge locus. \Cref{mainhodgegenthm} result shows this is the case.

\subsection{Period conjecture}\label{periodconj}

We give some partial results on the period conjecture of Grothendieck, see e.g. \cite[IX.2.2]{zbMATH00041964} and \cite[Sec. 7.5]{zbMATH02128361} for background. 

\begin{conj}(Grothendieck Period Conjecture)
\label{Gperconj}
Let $Y$ be a variety over a number field $K \subset \mathbb{C}$ and $j$ a natural number. Then the transcendence degree (over $\Q$) of the field generated by the period matrix for the Betti-de Rham period morphism $H^j(Y_{\mathbb{C}}, \mathbb{Q}) \otimes \mathbb{C} \xrightarrow{\sim} H^j_{\operatorname{dR}}(Y) \otimes_{K} \mathbb{C}$ is equal to the dimension of the Mumford--Tate group of the Hodge structure on $H^j(Y_{\mathbb{C}}, \mathbb{Q})$.
\end{conj}

We wish to show that as $Y$ varies in a family of algebraic varieties, the Grothendieck period conjecture is ``often'' true. However, we will only be able to establish something weaker, which controls the degree over $K$ over which possible exceptional relations are defined. 

Let $f : X \to S$ be a smooth projective family defined over $\Qbar$, and let $j \geq 0$ be an integer. For simplicity let us shrink $S$ so that $R^j f_{*} \Omega^{\bullet}_{X/S}$ is trivial. Consider the Betti-de Rham comparison 
\begin{equation}
    \mathbf{P} : \underbrace{R^j f_{*} \mathbb{Q}}_{=: \mathbb{V}} \otimes \mathcal{O}_{S^{\an}} \simeq [\underbrace{R^j f_{*} \Omega^{\bullet}_{X/S}}_{=: \mathcal{H}}]^{\an}.
\end{equation}
Fixing bases for $\mathcal{H}$ and $\mathbb{V}$ (the latter defined only up to monodromy), one obtains a multi-valued varying period matrix $\mathbf{P} : S^{\an} \to \GL_{n}(\mathbb{C})$, where $n$ is the rank of the cohomology in degree $j$. Let $B_{\mathbf{P}} \subset S \times \GL_{n}$ be the $\Qbar$-Zariski closure of the graph of $\mathbf{P}$. Then the Grothendieck period conjecture, together with the Hodge conjecture for self-products of a very general fibre $X_{s}$ of $f$, implies the following:

\begin{conj}
\label{perconj}
For all points $s \in S(\Qbar)$ outside the Hodge locus, the $\Qbar$-Zariski closure of $\mathbf{P}(s)$ is an irreducible component of $B_{\mathbf{P},s}$. 
\end{conj}
\noindent This consequence of the period conjecture can be broken up into a series of weaker statements indexed by an integer $\delta$, as follows:
\begin{conj}\label{degdeltaperconj}
For all points $s \in S(\Qbar)$ outside the Hodge locus, and for a fixed integer $\delta$, the ideal defining the $\Qbar$-Zariski closure of $\mathbf{P}(s)$ has no elements of degree at most $\delta$ defined over $\kappa(s)$ which do not vanish on some irreducible component of $B_{\mathbf{P},s}$ passing through $\mathbf{P}(s)$. As shorthand, we say that the $\Qbar$-Zariski closure of $\mathbf{P}(s)$ carries ``no nontrivial reations of degree $\delta$ over $\kappa(s)$''.
\end{conj}
\noindent Correspondingly, one also has a locus
\[ S_{P,j,\delta} := \{ s \in S(\Qbar) : \overline{\mathbf{P}(s)}^{\Qbar-\operatorname{Zar}}\textrm{ has no nontrivial relations of degree }\delta\textrm{ over }\kappa(s) \} .  \]

\noindent We note that although our construction of $\mathbf{P}$ required assuming $\mathcal{H}$ is trivial, the locus $S_{P,j,\delta}$ is independent of the basis for $\mathcal{H}$ chosen, and so makes sense even if $\mathcal{H}$ is not trivial over $S$. Likewise the statements of \autoref{perconj} and \autoref{degdeltaperconj} do not depend on this choice of basis and hence also globalize.

It turns out that our general $G$-function-based method shows that points where \Cref{degdeltaperconj} hold are abundant in $S$. Using it, we will show the following.

\begin{thm}\label{galperiodconj}
For each positive integer $\delta$, $S_{P,j,\delta}$ is analytically dense in $S(\mathbb{C})$.
\end{thm}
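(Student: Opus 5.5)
We reduce \Cref{galperiodconj} to the $N$-fold self-product argument of \S\ref{strsec}, combined with the $G$-function result \Cref{thmwithproof} and Bombieri's theorem (\Cref{thm:bombieri}).

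\textbf{Reduction to curves.} Since analytic density is local, fix a small ball $U \subset S(\mathbb{C})$. Through a $\Qbar$-point of $U$ choose a curve $C \subset S$ defined over a number field whose generic point is outside the Hodge locus and whose generic period matrix restricted to $C$ has the same $\Qbar$-Zariski closure as on $S$ (i.e.\ $B_{\mathbf{P}|C}$ is the restriction of $B_{\mathbf{P}}$). Such curves exist by taking generic complete-intersection curves and using that the algebraic monodromy is preserved (Lefschetz-type argument). Since Hodge-locus membership and relations over $\kappa(s)$ only get worse under restriction, it suffices to produce points of $C(\Qbar)\cap U$ in $S_{P,j,\delta}$. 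Normalising $C$ and passing to a finite map to $\mathbb{P}^1$ (which changes $\kappa(s)$ by a bounded degree, absorbed by enlarging $\delta$ and the base field), we may assume $S \subset \mathbb{P}^1$ is Zariski open.

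\textbf{Main step.} Fix $s_0$ near $U$ and the $G$-operator $d/ds - \mathbf{A}(s)$ of \S\ref{strsec}, with the solution matrix $\mathbf{G}(s)=\mathbf{P}(s)\mathbf{P}(0)^{-1}$. Take $N \gg \operatorname{trdeg}\mathbf{P}(0)$ and a generic rational curve $C' \subset S^N$ through $(s_0,\dots,s_0)$, obtaining the $G$-function system (\ref{Nfoldsystem}). Suppose, for contradiction, that all points of $C'$ with integral parameter $\iota(c)$, lying in a suitable neighbourhood and satisfying $[\kappa(c):K]$ bounded, have every coordinate $s^i \notin S_{P,j,\delta}$. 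For each coordinate there is then a relation of degree $\le\delta$ over $\kappa(s^i)$ on $\mathbf{P}(s^i)$ that is not forced by $B_{\mathbf{P},s^i}$. Relations of degree $\le\delta$ form a finite-dimensional space, so they can be spread out along the parameter with bounded degree. Eliminating the entries of $\mathbf{P}(0)$ (possible because $N$ exceeds its transcendence degree) yields a nontrivial relation of bounded degree on $\mathbf{G}_1(c),\dots,\mathbf{G}_N(c)$ that is not a specialisation of the generic (functional) relations of the system. \Cref{thmwithproof} and Bombieri's height bound then give a bound on the height of such $c$. Meanwhile, integral-parameter points of $C'$ near $0$ are dense in the analytic sense and have unbounded height, since they are obtained by varying the curve $C'$ and the parameter. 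This is a contradiction. Hence some coordinate $s^i$ lies in $S_{P,j,\delta}$, and by choosing the curves $C'$ so that all projections stay inside $U$, we get density.

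\textbf{Main obstacle.} The delicate point is nontriviality after elimination. We must show that the $N$ relations, each of degree $\le\delta$ and not vanishing on the relevant component of $B_{\mathbf{P},s^i}$, combine into a relation on the $\mathbf{G}_i(c)$ that is not implied by the generic Zariski closure of the system $(\mathbf{G}_1,\dots,\mathbf{G}_N)$ over the generic curve. I would first try to prove that this generic closure is the fibre product of the $B_{\mathbf{P}}$'s translated by $\mathbf{P}(0)^{-1}$; this uses the genericity of $C'$ and the independence of the coordinates. Then a dimension count, with each coordinate dropping dimension by at least one and $N$ drops exceeding $\operatorname{trdeg}\mathbf{P}(0)$, shows that the projection to the $\mathbf{G}$-coordinates drops dimension. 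The relation degrees remain bounded in terms of $\delta, N, n$, which is what \Cref{thmwithproof} requires.
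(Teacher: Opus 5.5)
Your strategy is the paper's, but you fold the proof of \Cref{thmwithproof} into the argument instead of using it, and the result is both circular and incomplete. In the paper the proof of this statement is two lines. One reduces, as in the proof of \Cref{generalHggenthm2} (adjoining $\sqrt{-1}$ so that $S(K)$ is dense), to $S\subset\mathbb{P}^1$ open. One then sends a $K$-point $p\in U$ to $\infty$ and applies \Cref{thmwithproof}(1) to the Gauss--Manin $G$-operator. The points of $\mathcal{R}_{\mathbf{P},\delta}$ accumulating at $p$ are $K$-points, so $\kappa(s)=K$, and \Cref{def:nontrivial} is the nontriviality of \Cref{degdeltaperconj}.

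Your Main step instead re-runs the self-product/elimination/Bombieri argument, and in the middle of it cites \Cref{thmwithproof} to bound heights. That theorem is the output of this very argument; the height bound comes from \Cref{thm:bombieri} alone. Read as a self-contained argument, the step you label the Main obstacle is a genuine gap, since you only say how you would try to show that the $N$ relations survive elimination of $\mathbf{P}(0)$. The paper proves exactly this, but not via ``$B_{\mathbf{P}}$ translated by $\mathbf{P}(0)^{-1}$'', which is not a $K$-variety. It uses the $K$-rational solution $\mathbf{G}_{\operatorname{log}}$ and the map $v:B_{\mathbf{G}_{\operatorname{log}}}\times B_Q\to B_{\mathbf{P}}$. \Cref{imIlem} shows each pulled-back relation stays nonzero on every component. \Cref{deeplemma} then shows the pullbacks form a regular sequence, using that $Q$ is $K$-Zariski generic in $B_Q$, which gives codimension at least $N-\dim B_Q$ before projecting to the $G$-function coordinates.

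Several side claims also need correcting.
\begin{itemize}
\item Density. On a fixed curve the integral-parameter points accumulate only at the base point, so they are not dense. Density comes from letting the base point range over the dense set $S(K)$.
\item Contradiction hypothesis. Your hypothesis (``all such points have every coordinate bad'') negates only to the existence of one good point. What Bombieri actually gives is that for every $t>\kappa$ some coordinate is good.
\item Fields of definition. \Cref{thm:bombieri} needs $\xi\in K$ for one fixed number field, so you cannot allow $[\kappa(c):K]$ merely bounded. ``Absorbed by enlarging the base field'' also fails, because $\kappa(s)$ varies with $s$. You need a norm/pushforward argument, as in the paper's passage to $e_{*}\V$ before choosing rational curves through $K$-points via \Cref{genRlem}.
\item Restriction to a curve. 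Restricting to $C$ does not make relations ``only get worse''. Fibres of $B_{\mathbf{P}|C}$ are contained in those of $B_{\mathbf{P}}$, so more relations become trivial on $C$ and more points of $C$ look good. What you actually need is the fibrewise equality you assert, which comes from $\pi_1$-surjectivity and the torsor structure, not from a monotonicity principle.
\end{itemize}
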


\autoref{galperiodconj} does not directly give any results on the period conjecture; for instance, if the family $f$ is a constant family with fixed fibre $Y$, then \autoref{galperiodconj} says nothing about the periods of $Y$. Under the favourable assumptions discussed in the next section, however, \autoref{galperiodconj} gives results on the period conjecture itself, allowing us to prove a ``up to degree $\delta$'' version in \autoref{mainthmonperiod}. We note that \autoref{mainthmonperiod} appears to be already new for the following one parameter families of hyperelliptic curves (even in genus one):

\begin{cor}
\label{hypercor}
Let $f \in \mathbb{Q}[x]$ be a square free polynomial  of degree $2g$ with $g \geq 1$, and consider the family
\[ C_{t} : y^2 = f(x)(x-t) \]
of hyperelliptic curves. Then for each $\delta > 0$ consider the subset $\mathbb{N}_{\delta} \subset \mathbb{N}$ defined by the property that the periods $\int_{\gamma_{j}} \frac{x^{i} dx}{y}$ of $C_{t}$ satisfy the period conjecture in degree $\delta$ for $t \in \mathbb{N}_{\delta}$. Then $\mathbb{N}_{\delta}$ has natural density one. Moreover, there is an effective method which produces a subset $\mathcal{S} \subset \mathbb{N}_{\delta}$ such that ``99 percent'' (in the sense of \Cref{effectivethmintro}) of the $C_{t}$ for $t \in \mathcal{S}$ satisfy the period conjecture up to degree $\delta$.
\end{cor}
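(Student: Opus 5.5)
The plan is to deduce the corollary from the general period-conjecture result \autoref{mainthmonperiod}, combined with the quantitative and effective versions of the main theorem for one-dimensional bases (\Cref{quantitativeHodgegen} and \autoref{effcor}). This is the same route by which \autoref{thmLefschetzpen} is derived from the general machinery.

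\textbf{Setting up the family.} Let $\Sigma \subset \mathbb{A}^1$ be the set of roots of $f$. Take
\[ S = \mathbb{P}^1 \setminus (\Sigma \cup \{\infty\}), \]
with the smooth projective family $\mathcal{C} \to S$ of genus-$g$ curves $C_t$, defined over $\mathbb{Q}$, and $\mathbb{V} = R^1 \pi_* \mathbb{Q}$. The de Rham bundle $\mathcal{H}$ is trivialized over $S$ (after possibly shrinking $S$ by finitely many points) by the classes of $x^i\,dx/y$ for $0 \le i \le g-1$, together with $g$ second-kind forms such as $x^i\,dx/y$ for $g \le i \le 2g-1$. Then $\mathbf{P}(t)$ is exactly the matrix of the periods $\int_{\gamma_j} x^i\,dx/y$ appearing in the statement.

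We use $s_0 = \infty \in \mathbb{P}^1 \setminus S$ as the base point, with parameter $c = 1/t$. The condition $t \in \mathbb{N}$ is then precisely the integrality condition $\iota(c) \in \mathbb{A}^1(\mathbb{Z})$ that enters Bombieri's theorem in the strategy of \S\ref{strsec}. The footnote there says the main theorems allow $s_0 \notin S$, i.e.\ quasi-unipotent monodromy at $s_0$. Near $t=\infty$ the branch point $t$ collides with the point at infinity of the curve, so the local monodromy is unipotent, possibly after a base change of degree two.

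\textbf{Large monodromy.} Next we verify the large monodromy hypothesis of \autoref{mainthmonperiod}. As $t$ loops around a root $\alpha$ of $f$, two branch points of the curve are exchanged. Hence the local monodromy is a symplectic transvection in the vanishing cycle of a loop through $t$ and $\alpha$. These $2g$ vanishing cycles form a chain, by the standard picture of a hyperelliptic curve with branch points $t, \alpha_1, \dots, \alpha_{2g}$. By the classical argument of A'Campo and Janssen, transvections in such a connected chain generate a Zariski-dense subgroup of $\mathrm{Sp}_{2g}$.

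It follows that the generic Mumford--Tate group is $\mathrm{GSp}_{2g}$. Moreover, the $\Qbar$-Zariski closure $B_{\mathbf{P}}$ of the graph of $\mathbf{P}$ has fibres $B_{\mathbf{P},t}$ that are $\mathrm{GSp}_{2g}$-torsors, of dimension $2g^2+g+1$. The upper bound comes from the polarization and the Legendre-type relation, and the lower bound from monodromy (André's theorem on the algebraic closure of the monodromy). Therefore, at any Hodge generic $t$, the dimension of $B_{\mathbf{P},t}$ equals $\dim \MT(H^1(C_t))$.

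\textbf{Conclusion and effectivity.} By the identification of $B_{\mathbf{P},t}$ with a torsor of the right dimension, the condition $t \in S_{P,1,\delta}$ (no nontrivial relations of degree $\delta$ over $\kappa(t) = \mathbb{Q}$) is exactly the period conjecture up to degree $\delta$ for $C_t$. The density-one statement for $t \in \mathbb{N}$ then follows from the quantitative curve-base version of the $G$-function theorem \Cref{thmwithproof}, which bounds the heights of exceptional integral points. Since the number of positive integers $t \le T$ is $T$, a height bound showing there are only $o(T)$ exceptional $t \le T$ gives natural density one. The effective $99$-percent statement follows verbatim from \autoref{effcor} applied to this family.

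\textbf{Main obstacle.} The hardest step is the large-monodromy step. It requires not just Zariski density of the monodromy, but the precise identification of $B_{\mathbf{P},t}$ with a $\mathrm{GSp}_{2g}$-torsor over $\mathbb{Q}$. Only with that identification does ``no extra relations in $B_{\mathbf{P}}$'' turn into a genuine statement of the period conjecture. For this I would first use the $\mathbb{Q}$-rational polarization to show that $\mathbf{P}(t)$ lies in a $\mathrm{GSp}_{2g}$-torsor defined over $\mathbb{Q}(t)$. I would then use monodromy to rule out any smaller closure.
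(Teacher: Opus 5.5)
Your route is the paper's: the corollary is \autoref{mainthmonperiod} in its density-one form, applied to $R^1$ of the family over $\mathbb{P}^1$ minus the roots of $f$ and $\infty$, once \eqref{bigmon} is known. The only difference is that you sketch a Picard--Lefschetz argument for the large monodromy, where the paper cites an unpublished theorem of J.-K.~Yu (see also \cite[Thm.~1.2]{zbMATH05224877}); $\mathbf{G}'=\mathbf{G}'_{\mathrm{abs}}$ is then automatic.

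The genuine problem is your derivation of natural density one. \autoref{thmwithproof} does \emph{not} bound the heights of exceptional integral $t$. Bombieri's theorem is applied to the product system $\mathbf{H}$ along the curve $x\mapsto(x,x+D,\dots,x+n^2D)$. What it rules out for $t>\kappa$ is only that \emph{all} of $[t:1],[t+D:1],\dots,[t+n^2D:1]$ are exceptional simultaneously. Individual exceptional $t$ of arbitrarily large height are not excluded; this is exactly why the paper only gets ``99 percent'' statements and calls its conclusion weaker than Andr\'e's. So there is no ``height bound showing there are only $o(T)$ exceptional $t\le T$'' to invoke. Read literally, your claim would give finitely many exceptions, which is not what the theorem provides.

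The density must instead be extracted from \autoref{thmwithproof}(3) by counting. Fix $\nu$, put $D=D(\nu)$ and $M=n^2+\nu$, and split the integers in $(\kappa(\nu),T]$ of each residue class mod $D$ into consecutive blocks $\{t,t+D,\dots,t+(M-1)D\}$. Every complete block contains at least $\nu$ elements of $\mathbb{N}_\delta$, and the incomplete blocks contribute only $O_\nu(1)$. Hence the lower density of $\mathbb{N}_\delta$ is at least $\nu/(n^2+\nu)$, and letting $\nu\to\infty$ gives density one. Likewise, the effective statement is not \autoref{effcor} verbatim, since that corollary concerns Hodge genericity. It is the parallel argument run directly on \autoref{thmwithproof}(3).

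Smaller inaccuracies:
\begin{itemize}
\item A loop of $t$ around a root $\alpha$ is a full twist, not an exchange, so the local monodromy is the \emph{square} of a transvection.
\item With loops based at one point, the vanishing cycles naturally form a star (pairwise intersection $\pm1$), not a chain.
\item Neither point hurts Zariski density: the closure of $\langle T^2\rangle$ contains $T$, and the cycles span $H^1$ with connected intersection graph.
\item Monodromy alone only gives an $\mathrm{Sp}_{2g}$-torsor, of dimension $2g^2+g$, inside $B_{\mathbf{P},t}$. The missing similitude direction comes from the transcendence of $\pi$, as in the proof of \autoref{mainthmonperiod}. This is covered because you invoke that result, but not by the justification you give.
\end{itemize}
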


\subsection{Tate and Mumford-Tate conjectures}
When $f$ is defined over a number field $K \subset \Qbar \subset \mathbb{C}$, one can also study the locus where extra algebraic cycles appear using the Tate conjecture.

\begin{conj}(Tate conjecture)
\label{Tateconj}
Let $Y$ be a smooth projective algebraic variety defined over a number field $K$, let $j, \ell \geq 0$ be integers with $\ell$ prime and suppose that $c \in H^{2j}_{\'et}(Y_{\overline{K}}, \mathbb{Q}_{\ell}(j))$ is a class invariant under a finite index subgroup of $\textrm{Gal}(\overline{K}/K)$. Then $c$ is in the $\mathbb{Q}_{\ell}$-span of the algebraic classes.
\end{conj}


\noindent Classes $c$ as in \autoref{Tateconj} are called \emph{Tate classes} (of cohomological degree $j$). Similarly there is also a definition of \emph{Tate tensors}. The Tate conjecture says that Tate classes are linear combinations of algebraic cycle classes.

Since both Hodge and Tate classes should correspond to algebraic cycle classes, the two notions of genericity should be the same. This is the subject of the wide open Mumford-Tate conjecture.

\begin{conj}($\ell$-Mumford-Tate)
\label{MTconj}
Let $Y$ be a smooth projective algebraic variety defined over a number field $L \subset \mathbb{C}$, and let $\ell$ be a prime. Then for each pair of integers $(j, k)$ the Artin comparison map
\[ H^{2j}_{B}(Y^{k}_{\mathbb{C}}, \mathbb{Q}) \otimes_{\mathbb{Q}} \mathbb{Q}_{\ell}(j) \xrightarrow{\sim} H^{2j}_{\textrm{\'et}}(Y^{k}_{\overline{L}}, \mathbb{Q}_{\ell}(j)) \]
identifies the $\mathbb{Q}_{\ell}$-vector spaces spanned by Hodge and Tate classes.
\end{conj}

\noindent If the Mumford-Tate conjecture was known, one could use it to produce Hodge generic points from $\ell$-Tate generic points, and then deduce \autoref{mainhodgegenthm} from Serre's version of Hilbert irreducibility theorem, cf. \S \ref{relworksec}. But, like both the Hodge and Tate conjectures, the Mumford-Tate conjecture is wide open. However, a posteriori it turns out that we can combine the two density results to say something about \Cref{MTconj} more general than \Cref{mainthmonperiod}. First, we recall what it means for a Hodge class to be $K$-absolute Hodge, with $K \subset \mathbb{C}$ a number field. Suppose that $Y$ is a smooth projective complex algebraic variety. Then a Hodge class $v \in H^j(Y,\Q)$ is called $K$-\emph{absolute Hodge class}, see \cite{zbMATH03853242}, if for every automorphism $\sigma \in \operatorname{Aut}(\mathbb{C}/K)$ the image of $v$ under the comparison
\[ H^j(Y, \mathbb{Q}) \otimes_{\mathbb{Q}} \mathbb{C} \xrightarrow{\sim} H^j_{\operatorname{dR}}(Y) \xrightarrow{\sigma} H^j_{\operatorname{dR}}(Y^{\sigma}) \]
is the image of a Hodge class under the Betti-de Rham comparison with $H^j(Y^{\sigma}, \mathbb{Q})$. 

To explain the general version of \Cref{mainthmhyper}, we introduce some notation. Given a smooth projective family $f : X \to S$ defined over $K$ and a fixed integer $j$, let $\V$ be the variation of Hodge structures associated to the primitive cohomology $H^j(X_s, \mathbb{Q})_{\mathrm{prim}}$. Associated to $\V$, we consider the following groups:
\begin{enumerate}
\item[(1)] $\mathbf{G}=\mathbf{G}(f,j)$, the generic Mumford--Tate group of the $\mathbb{Q}$VHS, i.e., the stabilizer of all Hodge tensors at a very general point $s \in S(\mathbb{C})$;
\item[(2)] $\mathbf{G}_{\mathrm{abs}}=\mathbf{G}_{\mathrm{abs}}(f,j)$, the generic \emph{absolute Mumford--Tate group}, i.e.\ the stabilizer of all absolute Hodge tensors at a very general point $s \in S(\mathbb{C})$; and
\item[(3)] $\mathbf{M}=\mathbf{M}(f,j)$, the identity component of the (Zariski closure of the) monodromy representation of $\pi_1(S)$ acting on $\V$ (cf. \Cref{mondef}).
\end{enumerate}
Note that (2) is well-defined: a tensor $t$ which is absolute Hodge for $H^j(X_s, \mathbb{Q})_{\mathrm{prim}}$, with $s \in S(\mathbb{C})$ outside the tensorial Hodge locus of $\mathbb{V}$, in fact extends to define an absolute Hodge tensor above every point in $S$, up to a finite orbit. In particular as an abstract group $\mathbf{G}_{\textrm{abs}}$ does not depend on the very general point $s$ chosen. See \cite[Prop. 5.6]{zbMATH06492665}.

Recall that $\mathbf{G} \subset \mathbf{G}_{\mathrm{abs}}$, and, unless the weight of the variation of Hodge structure is zero, both contain a canonical copy of $\mathbb{G}_m$ corresponding to the homotheties. We denote by $\mathbf{G}'$ and $\mathbf{G}_{\mathrm{abs}}'$ the corresponding quotients by this $\mathbb{G}_m$ (in weight is zero this notation is superfluous). Note that $\mathbf{M}$ is semisimple connected and $\mathbf{M} \subset \mathbf{G}$. Write $\mathbf{M}' = \mathbf{M}'(f,j)$ for the image of $\mathbf{M}$ in $\mathbf{G}$.

In the remainder of the section, we present results under the following assumption: 
\begin{equation}\label{bigmon}
  \mathbf{M}'(f,j) = \mathbf{G}'(f,j) = \mathbf{G}_{\mathrm{abs}}'(f,j)  .
\end{equation}
Condition \eqref{bigmon} is satisfied, for example, when $f:X \to S$ has large monodromy\footnote{A family $f:X \to S$ is said to have \emph{large monodromy} (in cohomological degree $j$) if the algebraic monodromy associated with the local system $[R^{j} f_* \Q]_{\operatorname{prim}}$ with its natural polarization is just the $\mathbb{Q}$-group of polarization preserving automorphisms $\operatorname{Aut}(V,Q)$; cf. the notation of \S\ref{vhsandpermapsec}.}. This is the case for the universal family of degree $d$ smooth hypersurfaces in any projective space, as well as the universal families of smooth complete intersections. Moreover, the equality $\mathbf{G}'(f,j) = \mathbf{G}_{\mathrm{abs}}'(f,j)$ always holds if $f$ is an abelian family, so it suffices to check the first inequality in (\ref{bigmon}): in particular, any non-isotrivial abelian family for which $\mathbf{G}'$ is $\mathbb{Q}$-simple will satisfy \eqref{bigmon}. Other cases where such an assumption is satisfied are given by Mustafin's work \cite{zbMATH04029733}; see also \cite[Ch. IX 3.2]{zbMATH00041964}.

\begin{thm}\label{mainthmonMT}
Let $f : X \to S$ be a smooth projective family defined over a number field $K \subset \Qbar \subset \mathbb{C}$ and $j$ a natural number. If $[R^jf_*\Q]_{\operatorname{prim}}$ satisfies \eqref{bigmon} then, for all $\ell$, the locus
$$S_{\operatorname{MT}, j,\ell}\subset S(\Qbar)$$
is analytically dense in $S(\C)$
\end{thm}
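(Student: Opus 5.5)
Under \eqref{bigmon}, the plan is to show that every Hodge generic point $s \in S(\Qbar)$ lies in $S_{\operatorname{MT},j,\ell}$, and then conclude with \Cref{mainhodgegenthm}, which gives analytic density of the $\Qbar$-Hodge generic points for $\V = [R^jf_*\Q]_{\operatorname{prim}}$. (Passing from $R^jf_*\Q$ to its primitive part is harmless: the Lefschetz decomposition only adds Tate twists of lower-degree pieces, and I expect the same argument, or \autoref{everythinggeneralizesrem}, to cover them.) So the theorem reduces to a pointwise statement: for $s$ Hodge generic, the $\ell$-adic Galois image $\mathbf{G}_{\ell,s}$, namely the identity component of the Zariski closure of the image of $\mathrm{Gal}(\Qbar/\kappa(s))$ acting on $H^j(X_{\bar s},\Q_\ell)_{\mathrm{prim}}$, equals $\mathbf{G}_s \otimes \Q_\ell$.

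I would prove the two inclusions separately. First, $\mathbf{G}_{\ell,s}^\circ \subset \mathbf{G}_{\mathrm{abs}} \otimes \Q_\ell$. This is Deligne's theorem that absolute Hodge classes yield Tate classes (\cite{zbMATH03853242}), applied to the absolute Hodge tensors defining $\mathbf{G}_{\mathrm{abs}}$. These tensors extend over $S$ up to a finite orbit, as noted after the definition of $\mathbf{G}_{\mathrm{abs}}$, so they specialize to absolute Hodge tensors at $s$, and a finite-index subgroup of Galois fixes them. Since $s$ is Hodge generic, $\mathbf{G}_s = \mathbf{G}$, and by \eqref{bigmon} we have $\mathbf{G}'=\mathbf{G}'_{\mathrm{abs}}$. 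Because both groups contain the homotheties, this gives $\mathbf{G}=\mathbf{G}_{\mathrm{abs}}$, hence $\mathbf{G}_{\ell,s}^\circ \subset \mathbf{G}_s\otimes\Q_\ell$.

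Second, the reverse inclusion $\mathbf{G}_{\ell,s}^\circ \supset \mathbf{M}\otimes\Q_\ell$ up to centre. Here I would use the geometric monodromy: the arithmetic $\ell$-adic monodromy of the family over a finitely generated field contains the geometric monodromy, whose Zariski closure is $\mathbf{M}\otimes\Q_\ell$ by the comparison of Betti and étale $\pi_1$. Serre's Hilbert irreducibility theorem then says that for $s$ outside a thin set the specialized Galois image has the same Zariski closure as the generic one. The obstacle is that the Hodge generic points provided by \Cref{mainhodgegenthm} are not a priori outside this thin set. To fix this I would show that the set of $s$ (of bounded degree $[\kappa(s):K]\le d$, as in \Cref{generalHggenthm2}) lying outside the thin set is itself analytically dense. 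The natural approach is to intersect the two dense loci, either by running the $G$-function argument of \Cref{thmwithproof} only on points of $\iota^{-1}(\mathbb{A}^1(\mathbb{Z}))$ chosen in the complement of a thin set (thin sets have density zero among integral points of curves, cf. the density-one statement \Cref{quantitativeHodgegen}), or by combining natural density one for both conditions on suitable curves through any given analytic open set. This compatibility step is the one I expect to be the main difficulty.

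With both inclusions in hand, the argument concludes as follows. The Zariski closure of Galois contains the homotheties by Bogomolov (or via weights and Deligne's theorem). Combined with $\mathbf{M}'=\mathbf{G}'$ from \eqref{bigmon}, this gives $\mathbf{G}_{\ell,s}^\circ \supset \mathbf{G}\otimes\Q_\ell$, and therefore equality. Hence $\mathrm{MT}(X_s,\ell,j)$ holds on an analytically dense set of $s$, for every prime $\ell$ and hence for every composite $\ell$.
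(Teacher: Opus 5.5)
Your argument is essentially the paper's: reduce to $S\subset\mathbb{P}^1$, take integers $t$ that are Hodge generic (density one by \Cref{quantitativeHodgegen}) and lie outside Serre's Hilbert-irreducibility thin set (density zero, since a thin set has $O(H^{1/2})$ integral points of height at most $H$), and then compare the $\ell$-adic group with $\mathbf{G}$ via \eqref{bigmon}, the Artin comparison of geometric monodromy groups and the homotheties --- your two inclusions make explicit what the paper compresses into one line. The one correction concerns your opening reduction: Hodge genericity alone does not give the Mumford--Tate conjecture at $s$, so \Cref{mainhodgegenthm} is not enough, and your option (b) is the actual argument rather than a fallback, and it is immediate rather than a main difficulty, since removing a density-zero set from a density-one set leaves a density-one set.
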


\subsection{Related work}
\label{relworksec}

The most common approach to the existence of Hodge generic points is via the ``easy direction'' of the Mumford-Tate conjecture: roughly, the statement that Hodge cycles associated to a cohomology group $H^{j}(X_{s}, \mathbb{Q})$ above a point $s \in S(K)$ should constrain the image of corresponding $\ell$-adic Galois representations $\rho_{s} : \textrm{Gal}(\overline{K}/K) \to \GL(H^{j}(X_{s,\overline{K}}, \mathbb{Q}_{\ell}))$. This is automatic under the Hodge conjecture, as well as weaker versions of the Hodge conjecture that assert such cycles have appropriate motivic properties. For instance, Voisin in \cite[Thm. 1.11]{zbMATH06492665} explains how the existence of many Hodge generic $S(\Qbar)$-points can be proven using the conjecture that Hodge cycles are absolute Hodge. In a similar vein, Andr\'e in \cite[Thm 5.2(3)]{zbMATH01019346} gives a similar statement for points in $S(\Qbar)$ admitting extra cycles that are motivated in an appropriate sense, and \cite[Thm. 1]{zbMATH03927068} gives such arguments in the explicit case of complete intersections and where the Hodge cycles one wishes to avoid come from line bundles.

The basic principle of all such arguments can be found in \cite{zbMATH03927068}. The idea is to assemble the representations $\rho_{s}$ into a family by considering the associated representation $\rho : \pi^{\'et}_{1}(S,s_{0}) \to \GL(H^{j}(X_{s_{0},\overline{K}}, \mathbb{Q}_{\ell}))$ of the \'etale fundamental group at some base-point $s_{0} \in S(K)$, and then study the set of $s$ for which the restriction $\rho_{s}$ maps into a strict subgroup of the image of $\rho$. A result of Serre \cite[\S10.6]{zbMATH00042767} (cf. the introduction to \cite{zbMATH06506624} and \cite[Fact 3.3.1.1]{zbMATH06657570}) can be used to show that such $s$ lie in a \emph{thin set}, defined as in \cite[\S9.1]{zbMATH00042767}. When $S$ has many $K$-points, one can show that the complement of such a thin set is analytically dense, hence obtain the corresponding Hodge-theoretic density statement under Hodge or absolute Hodge-like assumptions.

We briefly mention that, in the case where $f$ is an abelian family, Masser~\cite{zbMATH01181633} proved that the rank of the endomorphism ring does not jump at almost all $\overline{\Q}$-points using transcendental methods. This result builds on difficult work by Masser-W\"ustholz on endomorphism ring estimates for abelian varieties. We will not have such tools available to us.

In a related direction, the $p$-adic density of the Noether--Lefschetz locus is investigated in \cite{zbMATH06073742}. Using crystalline cohomology, the authors of \cite{zbMATH06073742} strengthen Terasoma’s theorems by proving that the Noether-Lefschetz locus is nowhere $p$-adically dense, under adequate assumptions on $p$. We refer also to the surveys \cite{zbMATH07243792}, and \cite{zbMATH07745042} for results on related topics. In general, the mixed case seems to be less investigated. 

\medskip

The period conjecture, \Cref{Gperconj}, is known for the cohomology of projective spaces and elliptic curves with complex multiplication; the case of projective spaces is the transcendence of $\pi$, and the result for elliptic curves is due to \cite{zbMATH03670527}. For other special cases, we refer also to \cite{zbMATH06576591}.

André has also used the $G$-function method to address certain cases of the period conjecture up to degree $\delta$, however, and this result is perhaps the closest to our \Cref{galperiodconj} and \Cref{mainthmonperiod}. Indeed, in \cite[Ch. IX (page 193)]{zbMATH00041964} Andr\'e considered families $X \to S$ of abelian schemes with (completely) multiplicative reduction in $s_0$, and projective smooth morphisms satisfying a certain strong degeneration condition in $s_0$. (See page 176 for the definition of \emph{multiplicative reduction}, as well as Mustafin's work \cite{zbMATH04029733}). He then proved that \textbf{all} points of $S(K)$ of sufficiently large height and which are sufficiently close to $s_0$ satisfy the following condition: every polynomial relation of degree $\le \delta$ with coefficients in $K$ among the values at $s$ of the locally invariant periods comes from specializations at $s$ of relative Hodge cycles. The main differences are that our approach works with every family (i.e. without any degeneration assumptions), considers the entire period matrix, and does not require working around a point $s_0$ where the family degenerates. However, our conclusion is weaker and only shows that ``99 percent'' of the points we produce are Hodge-generic.

As for the Mumford-Tate conjecture, to the best of our knowledge, little is known outside of motives that can be related to abelian varieties, where various partial results are known; see, for instance, the survey \cite{Moonen} of Moonen. 
For abelian varieties $A$ themselves, the conjecture is still open (even in dimension $4$), but is known in some cases, including the case where $A$ is geometrically simple of prime dimension \cite{zbMATH03941671}.

\section{$G$-functions and height bounds}\label{sec3}

\subsection{$G$-operators and $G$-functions at infinity}
\label{Gopsec}

By a \emph{differential operator} $L$ we mean an element $L \in \mathcal{D} := \Qbar[z][\partial]$, where $\Qbar[z][\partial]$ is the standard Weyl algebra, with $\partial = \frac{d}{dz}$. For this section, we largely follow \cite{zbMATH00041964}, \cite[Sec. 1]{zbMATH07441738} for a concise and recent reference, and \cite{zbMATH01860838} for background on linear differential equations. We start by recalling the definition of $G$-function, a concept introduced by Siegel \cite{zbMATH02563202}:

\begin{defn}\label{defGfunc}

A $G$-\emph{function} is a formal power series with algebraic coefficients
\[
G(z)=\sum_{n=0}^{\infty} a_n z^n \in \overline{\mathbb{Q}}[[z]] ,
\]
such that $G$ is annihilated by a non-zero differential operator
\(L \in \mathcal{D}\)
and satisfies the following growth conditions: for each \(n \ge 1\),
writing \(d_n \ge 1\) for the smallest integer such that
\(d_n a_1, \ldots, d_n a_n\) are algebraic integers, there exists
a real number \(C>0\) such that \(|\sigma(a_n)| \le C^n\) and \(d_n \le C^n\)
hold for all \(\sigma \in \mathrm{Gal}(\overline{\mathbb{Q}}/\mathbb{Q})\)
and all \(n \ge 1\).
\end{defn}
\noindent Note that the set $\{ a_n \}_{n \geq 0}$ generates a finite extension of $\mathbb{Q}$.

By a \emph{differential algebra} $\mathcal{A}$ we mean an algebra $\mathcal{A}$ which is a module over $\mathcal{D}$. Similarly, a \emph{differential module} is a module over $\Qbar[z]$ equipped with the action of a differential operator. Given a differential operator $L$, we obtain a natural differential module $M := \mathcal{D} / \mathcal{D} L$.

We consider the differential algebra
\begin{equation}
\label{Meq}
    \mathcal{M} = \mathbb{C}\{\!\{z\}\!\}\big[(z^a)_{a\in\mathbb{C}}, \log(z)\big],
\end{equation}
whose elements are expressions
\begin{equation}
\label{Peq}
    P(z) = \sum_{i=1}^m z^{a_i}\log(z)^{b_i}G_i(z),
\end{equation}
where $a_i\in\mathbb{C}$, $b_i\in\mathbb{Z}_{\ge 0}$, $m$ is a non-negative integer, and where the $G_i$ are convergent Laurent series.
We consider the differential subalgebra $\mathcal{G} \subset \mathcal{M}$ which consists of expressions of the form
\begin{equation}
\label{Geq}
P(z) = \sum_{i=1}^m c_i z^{a_i}\log(z)^{b_i}G_i(z),
\end{equation}
with $a_i\in\mathbb{Q}$, $b_i\ge 0$, $c_i\in\mathbb{C}$, $m$ is a non-negative integer, and where the $G_i$ are $G$-functions (cf. \Cref{defGfunc}). In this framework, elements of $\mathcal{G}$ include formal expressions corresponding to multi-valued functions algebraic over $\overline{\mathbb{Q}}(z)$, and the above representation is (essentially) unique if one further requires that $0 \leq a_{i} < 1$ for all $i$.

Following \cite[\S2.2]{zbMATH01485629} we define:

\begin{defn}
Suppose $M$ is a differential module, and $\mathcal{A}$ is a differential algebra. A \emph{solution} of $M$ in $\mathcal{A}$ is a morphism $s : M \to \mathcal{A}$ of differential modules.
\end{defn}

\noindent In particular if $L$ is a differential operator, a \emph{solution to $L$ valued in} $\mathcal{A}$ means a map $s : \mathcal{D} / \mathcal{D} L \to \mathcal{A}$. 

\begin{defn}
    The \emph{rank} of $L$ is the highest power of $\partial$ appearing in the expression defining the operator. 
\end{defn}

Another invariant associated to $L$ is the dimension of a maximal linearly independent set of solutions valued in $\mathcal{M}$. The operator $L$ is said to have \emph{regular singularities} if this is equal to the rank of $L$. Such a maximal linearly independent set of solutions is called a \emph{basis}.
\begin{defn}
\label{Gopdef}
A differential operator $L \in \mathcal{D} := \Qbar[z][\partial]$ is said to be a \emph{G-operator} if $L$ admits a basis of solutions valued in  $\mathcal{G}$, and where the $c_{i}$ lie in a number field.
\end{defn}

\noindent The above definition is equivalent to other standard definitions of $G$-operators found in the literature, cf. \cite[Sec. 1.6]{zbMATH07441738}. Note that \cite[Sec. 1.6]{zbMATH07441738} does not impose the additional condition that $c_{i} \in \Qbar$, but it is equivalent. 

We will need the following fundamental results on $G$-operators obtained by combining the work of André, Chudnovsky, and Katz. A proof can be found in \cite[Thm. 1.8]{zbMATH07441738}. See also the references therein, which include \cite{zbMATH03893285, zbMATH03985372}, for a complete history of this result. For the notions of products and duals of differential operators we refer to \emph{op. cit.} and \cite[Ch. 2]{zbMATH01860838}.

\begin{thm}[André, Chudnovsky, Katz]\label{ACK}
$G$-operators satisfy the following properties:
\begin{enumerate}
\item Every $G$-function, and more generally every element of $\mathcal{G}$, is annihilated by a $G$-operator.
\item Products and duals of $G$-operators are $G$-operators, and every left or right factor of a $G$-operator is a $G$-operator.
\item $G$-operators have regular singularities on $\mathbb{P}^1$, all with rational local exponents.
\item If $L$ is a $G$-operator, then so is $[h]^*L$ for any non-constant rational function $h \in \overline{\mathbb{Q}}(z)$.
\end{enumerate}
\end{thm}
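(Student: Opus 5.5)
This result is not proved in the paper; it is quoted from the literature, with \cite[Thm. 1.8]{zbMATH07441738} as a reference. What follows is how I would assemble a proof from the standard ingredients. The plan is to work with the \emph{size} of a differential system rather than with individual solutions. Write $L$ as a first-order system $Y' = A(z)Y$, with $A \in M_n(\Qbar(z))$, and let $Y$ be the normalized fundamental matrix at an ordinary algebraic point $z_0$, so $Y(z_0)=I$. The \emph{Galochkin condition} says that the common denominators of the first $N$ coefficients of the entries of $Y$ grow at most like $C^N$.

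The first step is to prove three equivalences:
\begin{itemize}
\item[(a)] $L$ is a $G$-operator in the sense of \Cref{Gopdef} if and only if the system satisfies the Galochkin condition at one (equivalently every) ordinary algebraic point;
\item[(b)] the Galochkin condition is equivalent to the Bombieri--Dwork condition, namely $\sum_p \log^+ R_p^{-1} < \infty$, where $R_p$ is the $p$-adic radius of convergence of generic solutions;
\item[(c)] the Bombieri--Dwork condition forces global nilpotence of the $p$-curvatures, by André's refinement of Bombieri's argument.
\end{itemize}
Katz's theorem then says that a globally nilpotent operator has regular singularities with rational exponents. This gives item 3.

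For item 1, take a $G$-function $G$ and let $L$ be its minimal annihilating operator. I would invoke Chudnovsky's theorem, proved by a Siegel-lemma construction of Padé approximants, which says that $L$ satisfies the Galochkin condition. By (a) and (b) above, $L$ is then a $G$-operator. For a general element $\sum_i c_i z^{a_i}\log(z)^{b_i}G_i$ of $\mathcal{G}$, each summand is annihilated by a $G$-operator, obtained by twisting the operator of $G_i$ by $z^{a_i}$ with $a_i \in \Q$ and iterating with $z\partial$ for the logarithms. A least common left multiple of these operators annihilates the sum, and it is a $G$-operator because its solution space is spanned by $\mathcal{G}$-valued solutions.

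For item 2, products are handled through fundamental matrices. The entries of the fundamental matrix of a tensor product are products of $G$-functions, so the Galochkin condition is inherited directly. For the dual, the fundamental matrix is the transposed inverse $({}^tY)^{-1}$, i.e.\ $\operatorname{adj}({}^tY)/\det Y$. The determinant equals $\exp\int \operatorname{tr}A$, and this is algebraic by the rationality of the exponents, so the entries of the inverse are $\mathcal{G}$-valued. For factors, suppose $L = L_1 L_2$. Solutions of $L_2$ form a subspace of the solutions of $L$, so the Bombieri--Dwork radii can only improve: $R_p(L_2) \geq R_p(L)$. Hence $L_2$ is a $G$-operator by the equivalences. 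The left factor $L_1$ is a quotient, and I would treat it by duality, since ${}^tL_1$ is a right factor of ${}^tL$.

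For item 4, I would base the pulled-back operator $[h]^*L$ at an ordinary point $w_0$ with $h(w_0)$ also ordinary. Composing a $G$-function with a rational function, after recentring so that the inner series vanishes at $w_0$, again gives a $G$-function: the growth of $|\sigma(a_n)|$ and of the denominators is preserved up to a constant factor, provided denominators are controlled via $h$ having $S$-integral Taylor coefficients after rescaling. Rationality of the exponents is preserved because the exponents get multiplied by ramification indices.

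I expect the main obstacle to be Chudnovsky's theorem in item 1. It requires the hard Padé-approximation estimate, showing that the minimal operator inherits the denominator growth of a single $G$-function. Everything else reduces, through André's equivalence between the Galochkin and Bombieri--Dwork conditions, to formal stability properties of radii of convergence.
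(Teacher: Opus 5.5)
The paper gives no proof of this statement. It quotes it from \cite[Thm. 1.8]{zbMATH07441738}, and your outline follows the standard route of that literature: Chudnovsky for item 1, André's Galochkin/Bombieri--Dwork equivalence with radius bookkeeping for item 2, Katz for item 3, and composition with a rational function for item 4.

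There is one concrete hole, in item 2. You read ``products'' as tensor products. In this paper a product of operators is a composition; compare \Cref{defn:geometric-origin}, ``a product of factors of Picard--Fuchs operators''. That is also the reading that pairs with ``left or right factor''. You never show that $L_1L_2$ is a $G$-operator when $L_1$ and $L_2$ are. In your framework this is quick:
\begin{itemize}
\item Right multiplication $P\mapsto PL_2$ embeds $\mathcal{D}/\mathcal{D}L_1$ into $\mathcal{D}/\mathcal{D}L_1L_2$, with quotient $\mathcal{D}/\mathcal{D}L_2$.
\item The generic $p$-adic radius of an extension is the minimum of the radii of the sub and the quotient. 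This follows from variation of constants, since integration does not shrink open discs.
\item Hence $\log^+R_p(L_1L_2)^{-1}\le\log^+R_p(L_1)^{-1}+\log^+R_p(L_2)^{-1}$, and the Bombieri--Dwork sum stays finite.
\end{itemize}

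Two further points. First, in (c) the radius argument gives less than global nilpotence. A non-nilpotent $p$-curvature at a good prime forces $R_p\le p^{-1/(p-1)}$. So the Bombieri--Dwork condition only shows that the exceptional set of primes $P$ satisfies $\sum_{p\in P}\log p/(p-1)<\infty$, i.e.\ nilpotence on a set of primes of density one. This suffices, because Katz's theorem only needs such a set.

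Second, and more importantly, every item is routed through your equivalence (a), which you assert without argument and which is not formal.
\begin{itemize}
\item Its forward direction goes from the Galochkin condition at an ordinary point to a $\mathcal{G}$-valued basis with algebraic constants at $z=0$, even when $0$ is singular. This is André's theorem that local solutions of a $G$-operator at a regular singular point have $G$-function coefficients. Naive Frobenius recursions there produce factorial-size denominators, and only a $p$-adic transfer argument controls them.
\item Its converse needs Chudnovsky's theorem for each $G_i$ occurring in the basis. It also needs stability under twisting by $z^a$, tensoring with $(z\partial)^{b+1}$, least common left multiples and right factors, all in the Galochkin sense.
\end{itemize}
So you must establish those stability properties for the Galochkin/Bombieri--Dwork notion first, and only then transport results through (a). As written, (a) is used to prove items 1 and 2, while its converse itself depends on them. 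Taking (a) from the literature is legitimate, since the paper does the same in the remark after \Cref{Gopdef}, but it should be flagged as the substantive input rather than a first step to be proved.
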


Note that if one applies the Fuchs criterion and Frobenius method (cf. \cite[Sec. 1.1]{zbMATH07441738}, \cite[Ch. III, \S1]{zbMATH00041964}) to a differential operator over $K$ with a regular singularities, one obtains a basis of solutions that naturally has coefficients in a finite extension $K'$ of $K$, with the extension $K'$ being the splitting field of the indicial equation associated to the operator. In the case of rational local exponents the indicial equation has rational roots and so splits over $K$, so one has $K' = K$. Thanks to \autoref{ACK}(3), this applies when the differential operator is a $G$-operator.




Each differential module $M$ has a generic fibre $\Qbar(z) \otimes_{\Qbar[z]} M$ which is naturally a $\Qbar(z)[\partial]$-module. Suppose that $M = \mathcal{D} / \mathcal{D} L$ with $L$ a $G$-operator, and we consider a basis $s_{1}, \hdots, s_{n} : M \to \mathcal{G}$ of solutions, with $n$ the rank of $L$. If we choose additionally a $\Qbar(z)$-basis $m_{1}, \hdots, m_{n}$ of the generic fibre of $M$, we obtain a matrix 
\[ \mathbf{P} = [ s_{i}(m_{j}) ]_{1 \leq i, j \leq n} \]
which we call a fundamental matrix of solutions of $L$. If $\mathbf{G}_{\textrm{log}}$ is the fundamental matrix of a $\Qbar$-solution, then any other fundamental matrix $\mathbf{P}$ is of the form 
\begin{equation}\label{Qacts}
    \mathbf{P} = \mathbf{G}_{\textrm{log}} \cdot  Q
\end{equation}
where $Q \in \GL_{n}(\mathbb{C})$ is a constant matrix.

\begin{rem}
We emphasize that the entries of $\mathbf{P}$ need not be $G$-functions themselves, but are instead linear combinations of $G$-functions with coefficients of the form $c z^{a} \log(z)^{b}$, with $c$ a complex number, $a$ rational, and $b$ an integer. More generally, such solutions allow one to relate the theory of $G$-functions to the theory of periods, where one considers analytic functions which have complex coefficients and may have non-trivial local monodromy.
\end{rem}

Since it will appear in the sequel, we recall the notion of \emph{tensor product}. Let $L_1, L_2$ be differential operators with solution spaces $\operatorname{Sol}_{L_1}$ and $\operatorname{Sol}_{L_2}$. There is an operator $L_1\otimes L_2$ whose solutions are spanned by 
\begin{displaymath}
    \{y_1 y_2 : y_1\in \operatorname{Sol}_{L_1}, y_2\in \operatorname{Sol}_{L_2} \}.
\end{displaymath}
It is denoted as a tensor product because of its relation with the tensor product of linear differential systems. Cf. also \cite[Def. 2.20]{zbMATH01860838}.

Given a $G$-operator $L$, we may regard its solutions (possibly multivalued) as functions on open subsets\footnote{Here and elsewhere, we write $\PP^1(\C)$ for the complex points of $\PP^1$ equipped with the analytic topology, to distinguish it from $\PP^1$ with the Zariski topology.} of $\PP^1(\C)$. 
\begin{defn}
\label{sigmadef}
We write $\Sigma = \Sigma(L) \subset \mathbb{P}^1$ for the set where $L$ does not admit a basis of holomorphic solutions. This is also known as the set of \emph{non-apparent singularities} of $L$.
\end{defn}

Finally, we record here two pieces of notation that will enter in the next section.
\begin{defn}
\label{indexdef}
Let $L$ a $G$-operator on $\PP^1$. To $L$ we associate:
\begin{itemize}
    \item The \emph{nilpotency index} of $L$, $N(L)$: the size of the largest Jordan block in the monodromy matrix $M$ at $z = 0$ of a fundamental matrix of solutions. It also corresponds to one more than the maximum power of the $\log$ appearing in the entries of a fundamental solution matrix.
    \item The \emph{ramification index} of $L$, $e(L)$: the common denominator of the rational exponents appearing in a fundamental solution, i.e. smallest integer $e$ such that $[z^e]^*M$ is unipotent. 
\end{itemize}
\end{defn}

\subsection{Differential operators of geometric
origin are $G$-operators, after André}\label{thmGoperator}

Let $S = \mathbb{P}^1_{\Qbar} \setminus \Sigma$, where $\Sigma \subset \mathbb{P}^1(\Qbar)$ is a finite set. Let $f: X \to S$ be a smooth proper morphism of $\Qbar$-varieties. For each $j \geq 0$, the relative algebraic de~Rham cohomology $\mathcal{H} := R^{j} f_{*} \Omega^{\bullet}_{X/S}$ is a locally free $\mathcal{O}_S$-module equipped with the \emph{Gauss–Manin connection} $\nabla: \mathcal{H} \to \mathcal{H} \otimes_{\mathcal{O}_S} \Omega^1_{S}$. Given a global coordinate $z$ on $S$ and a nonzero section $\omega \in \mathcal{H}(S)$, the connection $\nabla$ yields a linear differential operator $L_\omega \in \Qbar[z][\partial]$ killing the periods associated to $\omega$. Any such operator is called a \emph{Picard–Fuchs operator} of the family $f$.

\begin{defn}[{\cite[Ch. II, Def. 1.3]{zbMATH00041964}}]
\label{defn:geometric-origin}
A linear differential operator $L \in \Qbar[z][\partial]$ is said to be of \emph{geometric origin} if it is a product of factors of Picard–Fuchs operators. 
\end{defn}

Note that, as is explained in \cite[Ch. II]{zbMATH00041964} (cf. also the proof appearing in \cite[Ch. V, App.]{zbMATH00041964}), the notion of geometric here applies to differential operators coming from iterated extensions of Picard-Fuchs equations arising from smooth and proper families; in particular, it will apply in situations arising from families of algebraic varieties, not necessarily proper or smooth.

\begin{thm}[André {\cite[Ch. V, App.]{zbMATH00041964}}]
\label{thm:andre-geometric}
Let $L \in \overline{\mathbb{Q}}[z][\partial]$ be a differential operator of geometric origin. Then $L$ is a $G$-operator. 
\end{thm}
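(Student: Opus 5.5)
The statement to prove is \Cref{thm:andre-geometric}: a differential operator of geometric origin is a $G$-operator. By \autoref{ACK}(2), left and right factors and products of $G$-operators are again $G$-operators. Since an operator of geometric origin is by \Cref{defn:geometric-origin} a product of factors of Picard–Fuchs operators, it therefore suffices to show that every Picard–Fuchs operator $L_\omega$ of a smooth proper family $f : X \to S$ over $\Qbar$ is a $G$-operator. By \autoref{ACK}(1), it is in turn enough to show the following: for a suitable point $s_0$ and a cyclic vector $\omega$, the local solutions of the Gauss–Manin system near $s_0$ form a basis valued in $\mathcal{G}$ with algebraic constants $c_i$.

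The plan is to work at a point $s_0 \in S(\Qbar)$ where $f$ is smooth. First, replace $K$ by a number field over which $f$, $s_0$, and a basis $\omega_1, \hdots, \omega_n$ of $\mathcal{H}$ near $s_0$ are defined, and let $\mathbf{A}(z)$ be the connection matrix. The formal solution matrix $\mathbf{G}(z) = \sum Y_m z^m$ with $\mathbf{G}(0) = \mathrm{Id}$ then has entries in $K[[z]]$. These entries satisfy the archimedean growth condition automatically, since the system is analytic near $s_0$ and the series converges on a disc at every complex embedding. Other singular points are handled the same way: regularity (Griffiths, Katz) together with rational exponents (the local monodromy theorem) gives Frobenius-type solutions $z^{a}\log(z)^b G_i(z)$ with $a \in \Q$. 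The $c_i$ can then be taken algebraic by \eqref{Qacts}, because they only compare $\Qbar$-solutions.

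The core, and the main obstacle, is the denominator bound $d_m \le C^m$. I would first try to deduce it from Katz's theorem that the Gauss–Manin connection is globally nilpotent. Spread $f$ out over $\mathcal{O}_K[1/N]$. For almost all primes $\mathfrak{p}$, the reduction mod $\mathfrak p$ of $\mathcal{H}$ with its connection has nilpotent $p$-curvature, since Katz expresses the $p$-curvature via the Kodaira–Spencer map and Frobenius on the Hodge filtration. I would then invoke the Chudnovsky–André theorem: a system with nilpotent $p$-curvature for almost all $p$ (Bombieri–Dwork's condition) has size satisfying $\sigma(\mathbf{G}) < \infty$, i.e. $\limsup \frac1m \log \mathrm{den}(Y_0, \hdots, Y_m) < \infty$. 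This is exactly the $G$-function condition. Since the paper only allows results stated earlier, this theorem would be cited rather than reproved. The delicate points are the finitely many bad primes, which only affect finitely many $\mathfrak p$ and hence a constant factor, and the passage from $p$-curvature bounds to a uniform radius estimate. The latter is the genuinely hard analytic step, handled by André's $p$-adic radius estimates.

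Alternatively, and closer to André's appendix, I would prove the $p$-adic radius bound directly from geometry. For good $\mathfrak p$, the Frobenius structure from crystalline cohomology implies that the solution matrix converges on the open unit $\mathfrak p$-adic disc, a Dwork-type argument. For $p$ large this gives $|Y_m|_{\mathfrak p} \le 1$, while uniformly in $p$ it gives $|Y_m|_{\mathfrak p} \le$ a bound of the form $|m!|^{-\epsilon}$ supported on primes $p \le m$. Multiplying over primes then yields $d_m \le C^m$. Mixed or nonproper cases are reduced to iterated extensions and treated factor by factor, again using \autoref{ACK}(2).
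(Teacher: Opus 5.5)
The gap is exactly at the step you call the core, the bound $d_m\le C^m$, and neither of your two routes proves it. The first route rests on an implication that is not known. What is known is that $G$-operators are globally nilpotent, and (Chudnovsky) that the minimal operator of a vector of $G$-functions is a $G$-operator. The converse you need, ``nilpotent $p$-curvature for almost all $p$ $\Rightarrow$ $\sigma<\infty$'', is an open problem. The obstruction is quantitative. Nilpotence of the $p$-curvature only guarantees that the generic $p$-adic radius satisfies $R_p>p^{-1/(p-1)}$, not $R_p\ge 1$. Since $\sum_p \log p/(p-1)$ diverges, this does not even bound the global inverse radius $\rho=\sum_p\log^+R_p^{-1}$, let alone the denominators. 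So Katz's global nilpotence theorem cannot be the engine: the geometric input must be strictly stronger than nilpotent $p$-curvature.

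Your second route has the right geometric input. For good $\mathfrak p$, the crystalline Frobenius structure (Dwork's trick) upgrades the radius to $R_{\mathfrak p}\ge 1$, and at the finitely many remaining primes $R_{\mathfrak p}>0$ for trivial reasons, so $\rho<\infty$. But your passage from radii to coefficients is wrong as written. Convergence on the open unit disc gives no bound on any individual $|Y_m|_{\mathfrak p}$: it only gives $|Y_m|_{\mathfrak p}\le C_{\mathfrak p,\epsilon}(1+\epsilon)^m$ with uncontrolled constants. So it does not by itself give $|Y_m|_{\mathfrak p}\le 1$ for $p>m$. Moreover, a bound of the shape $|m!|_{\mathfrak p}^{-\epsilon}$ at the primes $p\le m$ multiplies to $(m!)^{\epsilon}$, which is not $O(C^m)$.

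The missing ingredient is the Dwork--Robba effective estimate. At a point whose reduction avoids the singularities, a generic radius $\ge R_p$ gives, for all but finitely many $p$,
$|Y_m|_p\le\{m,n-1\}_p\,R_p^{-m}$, where $\{m,n-1\}_p=\max_{1\le\lambda_1<\dots<\lambda_{n-1}\le m}|\lambda_1\cdots\lambda_{n-1}|_p^{-1}\le p^{(n-1)\lfloor\log_p m\rfloor}$. Hence $\prod_p\{m,n-1\}_p\le\operatorname{lcm}(1,\dots,m)^{n-1}=e^{O(m)}$, and the remaining finitely many primes contribute only a factor $C^m$ since $R_{\mathfrak p}>0$ there. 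This is Andr\'e's comparison of the size $\sigma$ with the global inverse radius $\rho$, and it is what converts $\rho<\infty$ into $d_m\le C^m$. With it inserted, your second route is the standard argument behind the result, which the paper does not prove but simply cites from Andr\'e. Without it, the argument does not close.
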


The converse question—whether \emph{every} $G$-operator in $\overline{\mathbb{Q}}[z][\partial]$ is of geometric origin—is an open conjecture due to Bombieri and Dwork.

\subsection{Bombieri and André height bound}

In this section we review a strengthening of a result of Bombieri, cf. \cite[\S11, \S12]{zbMATH03721021}, which can be found in \cite[VII, Sec. 4.3] {zbMATH00041964}.

Let $G_1,\dots,G_{\mu} \in K[[z]]$ be $G$-functions with $K \subset \Qbar$ a number field. We fix $v$ a place of $K$; let $K_{v}$ be the completion of $K$ and let $\xi \in \overline{K_{v}}$ be a point whose $v$-adic valuation lies strictly inside the radius of convergence of all the $G_{i}$. 

\begin{defn}(Bombieri, Andr\'e) 
We say that there is a \emph{strongly non-trivial relation of degree $\delta$} among the values $G_i(\xi)$ if there exists a polynomial $R \in K[T_1,\dots,T_{\mu}]$ of degree $\delta$ such that
\[
R\bigl(G_1(\xi),\dots,G_{\mu}(\xi)\bigr)=0,
\]
holds $v$-adically, and such that this relation is not induced by a relation that holds identically among the functions $G_i$ themselves.
\end{defn}

To be more precise, let $B_{\mathbf{G}} \subset \mathbb{P}^1 \times \mathbb{A}^{\mu}$ be the Zariski closure of the graph of the tuple $\mathbf{G} = (G_{1}, \hdots, G_{\mu})$, where we regard each of the $G_{i}$ as functions on a common open subset of $\mathbb{P}^1(\C)$. 
\begin{defn}
\label{nontrivialreldef}
A \emph{non-trivial} relation $R$ of degree $\delta$ is a hypersurface $H_{R} \subset \mathbb{A}^{\mu}$ of degree $\delta$ such that the specialized point $(G_1(\xi), \hdots, G_{\mu}(\xi))$ lies in $H_R$ (regarded inside $\{ \xi \} \times \mathbb{A}^{\mu})$, and such that the intersection $B_{\mathbf{G},\xi} \cap H_{R}$ has dimension smaller than $\dim B_{G} - 1$. 
\end{defn}
\noindent The above notion is called \emph{strongly non-trivial} in \cite{zbMATH00041964}.

\medskip

\begin{thm}[Bombieri, André]\label{thm:bombieri}
Let $G_1,\dots,G_{\mu}$ be $\mu$ $G$-functions over a number field $K \subset \Qbar$, and let $h(-) : \Qbar \to \mathbb{R}_{\geq 0}$ denote the standard logarithmic Weil height. Let $\mu'$ denote the transcendence degree of the field over $K(z)$ generated by the $G_i$ and all their derivatives. Then there exist two constants $c_1,c_2>0$, depending only on the $G_i$, with the following property.

Let $\delta$ be a positive integer. If for some $\xi \in K$ we have
\begin{equation}
\label{bomb1}
    h(\xi) \ge c_1\, \delta^{\mu'}(\log \delta +1)
\end{equation}
and
\begin{equation}
\label{bomb2}
 |\xi|_v \le \exp\!\left(-c_2\, \delta^{\,1-\frac{1}{\mu'+1}}\, h(\xi)^{\,\frac{1}{\mu'+1}} \log h(\xi)^\frac{1}{\mu'+1}\right),   
\end{equation}
then there is no non-trivial relation of degree $\delta$ among the numbers $G_i(\xi)$ in $K_v$.
\end{thm}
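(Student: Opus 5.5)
This is André's refinement of Bombieri's theorem \cite[VII, Sec. 4.3]{zbMATH00041964}. The plan is to follow the standard Siegel--Bombieri scheme: build an auxiliary polynomial, then use the product formula.

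First I would reduce to a convenient normalization. Replace $G_1,\dots,G_\mu$ by a transcendence basis over $K(z)$ of the differential field they generate, together with finitely many derivatives. By \Cref{ACK}(1) this differential field is generated by the entries of a fundamental system $Y$ of a $G$-operator (a first-order system $Y'=AY$). Any relation of degree $\delta$ among the $G_i(\xi)$ then becomes a relation of degree $O(\delta)$ among the $\mu'$ algebraically independent coordinates, modulo the ideal of $B_{\mathbf{G}}$. Strong non-triviality (\Cref{nontrivialreldef}) is exactly what guarantees that this relation is not in the ideal of $B_{\mathbf{G},\xi}$.

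Next comes the auxiliary construction. Suppose $R$ is a non-trivial relation of degree $\delta$ at $\xi$. By Siegel's lemma, using the $G$-function growth bounds (the common denominators $d_n\le C^n$ and $|\sigma(a_n)|\le C^n$), I would construct polynomials $P_k(z)\in \mathcal O_K[z]$ of degree $\le D$ such that the combination $F(z)=\sum_k P_k(z)\,M_k(\mathbf G)$ vanishes at $z=0$ to high order $T$. Here the $M_k$ range over monomials of degree $\le \delta$ times $R$-cofactors, or equivalently over monomials in the quotient ring by $R$. The key zero estimate is a Shidlovskii-type multiplicity estimate (Bertrand--Beukers for Fuchsian systems), which ensures the number of unknowns versus conditions gives $T\approx D\cdot \delta^{\mu'}$ up to constants, using the non-triviality of $R$ to get the right dimension count. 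Then, differentiating via the system $Y'=AY$, I would produce nonzero algebraic numbers $\theta=$ (suitable derivative of $F$ evaluated at $\xi$, reduced modulo $R$). Their height is controlled by $D h(\xi)+T\log T$, and their $v$-adic size is tiny: $|\theta|_v\le (C|\xi|_v)^{T}$. At all other places I would bound them trivially using the Galois conjugate bounds of the $G$-function coefficients. The product formula $\sum_w \log|\theta|_w=0$ then yields a contradiction once (\ref{bomb1}) and (\ref{bomb2}) hold, after optimizing $D,T$ in terms of $\delta,h(\xi)$. This optimization is where the exponents $\delta^{\mu'}$ and $\tfrac1{\mu'+1}$ come from.

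The main obstacle is the non-vanishing step: showing that some derivative of $F$ at $\xi$, taken modulo the relation $R$, is nonzero. I would handle it with the multiplicity estimate applied to the restriction of the auxiliary function to the hypersurface $H_R\cap B_{\mathbf G}$. The strong non-triviality hypothesis makes this intersection of the expected dimension, so the zero estimate applies with the dimension drop by one. The remaining ingredient is the Chudnovsky--André theorem that the operator is a $G$-operator, which bounds denominators of derivatives of $F$.
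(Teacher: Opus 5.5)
Your opening citation is all the paper uses. The paper does not reprove this result but imports it from Andr\'e [VII, \S4.1--4.3]. The only bookkeeping is twofold. Andr\'e treats \emph{homogeneous} relations among $G$-functions indexed $0,\dots,\mu-1$, and the present statement follows by setting $G_0=1$, which changes neither $\mu'$ nor non-triviality. Andr\'e's algebraic-independence hypothesis of \S4.1 is removed in his \S4.2.

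The argument you sketch in place of the citation breaks at the decisive step. Your $\theta$ is a derivative of $F$ evaluated at $\xi$ and ``reduced modulo $R$'', so it is a $K$-linear combination of the numbers $M_k(\mathbf G(\xi))$. These numbers are in general transcendental and exist only in $K_v$, and a single relation $R$ does not make such a combination algebraic. So the product formula cannot be applied to $\theta$, and ``bounding it trivially at the other places'' has no meaning: the $G_i$ need not converge at $\xi$ in $K_w$ for $w\neq v$, and even when they do, the $w$-adic value is unrelated to the $v$-adic one.

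What the Siegel--Bombieri--Andr\'e method feeds into the product formula is a nonzero element of $K$ built from $K$-rational data only. It is a determinant whose rows are the coefficient vectors of $R$ (possibly with its monomial multiples) and of enough approximating forms $\sum_i P_{ki}(\xi)M_i$, obtained from $F$ by differentiating along $Y'=AY$.
\begin{itemize}
\item Its $v$-adic smallness comes from Cramer's rule. Multiply the matrix by the vector $(M_i(\mathbf G(\xi)))_i$ and use a nonzero coordinate such as the constant monomial $1$. The relation rows give $0$ and the approximating rows give $v$-adically small numbers.
\item At every other place it is bounded trivially in terms of $|\xi|_w$ and the sizes of the coefficients.
\item Its non-vanishing needs two inputs. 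Shidlovskii's lemma makes the forms produced by differentiation independent over $K(z)$, and a degree count keeps enough of them independent at $\xi$. Non-triviality of $R$ ensures its row is not already accounted for by the functional relations cutting out $B_{\mathbf G,\xi}$.
\item The coefficients of $R$ have uncontrolled height, while $c_1,c_2$ may not depend on $R$. Their contribution must be made negligible, for instance by letting the Pad\'e parameters tend to infinity with $\xi$ and $R$ fixed. Your sketch never addresses this.
\end{itemize}

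Relatedly, $R$ cannot enter the construction of the auxiliary function at all. It is a relation among the numbers $G_i(\xi)$, not among the functions $G_i(z)$. So the following are not meaningful:
\begin{itemize}
\item ``monomials in the quotient ring by $R$'' inside $F(z)$;
\item choosing $T$ ``using the non-triviality of $R$'';
\item ``restricting $F$ to $H_R\cap B_{\mathbf G}$'', which would restrict a function of $z$ near $0$ to a subvariety of the fibre over $\xi$.
\end{itemize}
$F$ is built from the functions alone, and $R$ is used only at $\xi$.

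Two smaller points. First, the field generated by the $G_i$ and their derivatives is generated over $K(z)$ by a solution \emph{vector} of a first-order system, not by a fundamental matrix. A fundamental matrix can have larger transcendence degree, which would spoil the exponent $\mu'$, and its entries need not be $G$-functions. Second, your reduction to a transcendence basis does not work as stated, because the remaining $G_i$ are only algebraic over that basis. A relation among the $G_i(\xi)$ gives one among the basis values only after an elimination, and you must check that strong non-triviality survives specialization at $\xi$. This is exactly what Andr\'e handles in \S4.2.
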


The above statement is proved in the course of arguments taking place in \cite[VII]{zbMATH00041964}. Note that \cite[VII, \S4.1]{zbMATH00041964} has the running hypothesis that the entries of the vector $\mathbf{G}$ are algebraically independent over $K(z)$, but this hypothesis is removed in \cite[VII, \S4.2]{zbMATH00041964}, and the above statement appears in \cite[VII, \S4.3]{zbMATH00041964}.

\begin{rem}
In \cite[VII, Sec. 4.3]{zbMATH00041964} only \emph{homogeneous} relations are considered, and the $G$-functions are numbered starting from $0$ up to $\mu - 1$. The statement we give is obtained by replacing $\mu-1$ with $\mu$ and setting $G_{0} = 1$, which allows one to remove the term ``homogeneous''. 
\end{rem}

\section{Main result on $G$-functions and $G$-operators}\label{Gresult}

\subsection{Statement of the main theorem}
\label{Gresultstatementsec}

Let $K \subset \Qbar \subset \C$ be a number field, which we fix throughout. We now label the coordinate $z$ from the previous section as $s$, and set $x = 1/s$. Consider a $G$-operator $L\in K[s][\partial]$, in the sense of \autoref{Gopdef}. Set $n$ for the rank of $L$. Let $\Sigma = \Sigma(L)$ be as in \autoref{sigmadef}. Fix a basis of solutions $b_{1}, \hdots, b_{n}$ of $L$ as well as a basis $m_{1}, \hdots, m_{n}$ for the associated differential module $\mathcal{D} / \mathcal{D} L$. Let 
\begin{equation}\label{basisorigi}
    \mathbf{P} = [b_{i}(m_{j})]_{1 \leq i, j \leq n}
\end{equation}
be the associated fundamental matrix. We define $B_\mathbf{P} \subset \mathbb{P}^1 \times \GL_{n} \subset \mathbb{P}^1 \times \mathbb{A}^{n^2}$ as the $K$-Zariski closure of the graph of the fundamental solution $\mathbf{P}$. For any evaluation point $s \in \PP^1(\C)$, let $B_{\mathbf{P}, s} \subset \mathbb{A}^{n^2}$ denote the algebraic fibre of $B_\mathbf{P}$ over $s$.

We want to consider relations $H_{R}$ on $\mathbf{P}(s)$ that do not vanish along any irreducible component of the germ of $B_{\mathbf{P},s} (\C)$ at $\mathbf{P}(s)$. This means that, for any open neighbourhood $\mathcal{N} \subset B_{\mathbf{P},s}(\mathbb{C})$ of $\mathbf{P}(s)$ and any irreducible component $C$ of $B_{\mathbf{P},s}$ passing through $\mathbf{P}(s)$, we have $C(\C) \cap \mathcal{N} \nsubseteq H_R(\C) \cap \mathcal{N}$. Equivalently, in algebro-geometric terms, we have:  
\begin{defn}\label{def:nontrivial}
Given $s \in \PP^1(K)$ a \emph{relation} at $s$ is a $K$-algebraic hypersurface $H_{s} = H_{R}$ in $\{ s \} \times \mathbb{A}^{n^2}$ defined by the polynomial $R$. It is said to be \emph{non-trivial} for $\mathbf{P}$ if $R$ does not vanish on any $K$-irreducible component of $B_{\mathbf{P},s}$ passing through $\mathbf{P}(s)$.

The \emph{degree} of the relation is the degree of $R$. 
\end{defn}
\noindent Note that the above is a version of \autoref{nontrivialreldef} except for $B_{\mathbf{P}}$ instead of $B_{\mathbf{G}}$.

Let $L$ be a $G$-operator on $\mathbb{P}^1$ of rank $n$ as above, let $x$ the coordinate of $\mathbb{P}^1$ at $0$, and $s = 1/x$ so that $\PP^1 = \operatorname{Proj}\, K[x, s]$. Let $\mathbf{P}(s)$ be a fundamental matrix of analytic solutions of $L$ in the parameter $s$, which we regard as single-valued analytic functions on a punctured disk around $\infty = [1 : 0]$, making a branch cut if necessary. 

\begin{thm}\label{thmwithproof}
Let $\delta \ge 1$ be a natural number and consider the set
\begin{equation}
\mathcal{R}_{\mathbf{P},\delta} := \left\{ x \in \mathbb{P}^1(K) \mid\; \begin{array}{c} \mathbf{P}(x) \text{ satisfies no relations of degree}\leq \delta \\ \text{which are non-trivial for $\mathbf{P}$.} \end{array} \right\}
\end{equation}
Then:
\begin{enumerate}
    \item[(1)] $\mathcal{R}_{\mathbf{P},\delta}$ has $\infty$ as an accumulation point. 
    \item[(2)] There exist an integer $D \leq n^2 (|\Sigma(L)|+1)^2 + 1$, and a constant $\kappa$, such that for any integer $t > \kappa$, at least one of the following points
    \begin{displaymath}
        [t:1], [t+D:1],\dots, [t+n^2D:1] \in \PP^1(K)
    \end{displaymath}
     lies in $\mathcal{R}_{\mathbf{P},\delta}$.
\item[(3)] More generally, for each positive integer $\nu > 0$ there exist an integer $D = D(\nu) \leq n^2 (|\Sigma(L)|+1)^2 + \nu$ and a constant $\kappa = \kappa(\nu)$ such that for any integer $t > \kappa$, at least $\nu$ of the following points 
    \begin{displaymath}
        [t:1], [t+D:1],\dots, [t+(n^2 + (\nu-1))D:1] \in \PP^1(K)
    \end{displaymath}
     lie in $\mathcal{R}_{\mathbf{P},\delta}$.
\end{enumerate}
\end{thm}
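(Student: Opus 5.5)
Part (3) implies (2) with $\nu=1$ and also (1), since the points $[t:1]$ with $t\to\infty$ tend to $\infty$. I therefore aim at (3), following the self-product strategy of \S\ref{strsec}. The basic move is a pigeonhole/elimination argument: if too many of the points $[t+kD:1]$ fail to lie in $\mathcal{R}_{\mathbf{P},\delta}$, we produce a single non-trivial relation of bounded degree among values of a $G$-function system at one point $\xi$. That point has large height and is $v$-adically close to the expansion point, so \Cref{thm:bombieri} rules it out.

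The construction goes as follows. Write $x=1/s$ and expand $\mathbf{P}$ around $x=0$ (i.e. $s=\infty$). By \Cref{ACK}(3) and the Frobenius method we can write $\mathbf{P}(x)=\mathbf{G}_{\log}(x)\,Q$. Here $\mathbf{G}_{\log}$ is built from $G$-functions over $K$ together with powers $x^{a}$, $a\in\frac1{e(L)}\Z$, and $\log(x)^b$ with $b<N(L)$, and $Q\in\GL_n(\C)$ is constant. Introduce a new parameter $u$ and the curve $u\mapsto(s_0(u),\dots,s_{M}(u))$ with $s_k=1/u+kD$, where $M=n^2+\nu-1$. Each $x_k=u/(1+kDu)$ is a rational function of $u$ vanishing at $0$. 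By \Cref{ACK}(4), the pulled-back systems $\mathbf{G}_{\log}(x_k(u))$ are again solutions of $G$-operators in $u$. Stripping off the $x^a\log^b$ factors gives a finite tuple of $G$-functions $\mathbf{G}^{(k)}(u)\in K[[u]]$. Evaluating at $u=1/t$ recovers $\mathbf{P}([t+kD:1])$ up to the known algebraic/log factors and the common matrix $Q$.

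Now suppose that for some large $t$ at least $M+1-\nu+1=n^2+1$ of the points fail to lie in $\mathcal{R}_{\mathbf{P},\delta}$. For each failing index $k$ we have a non-trivial $K$-relation $R_k$ of degree $\le\delta$ on $\mathbf{P}(s_k)=\mathbf{G}_{\log}(x_k)Q$. The $n^2$ entries of $Q$ are common unknowns. Eliminating them from $n^2+1$ relations, via a resultant or Chow-form argument of degree bounded in terms of $\delta$ and $n$, yields a polynomial relation $R'$ of degree $\delta'=\delta'(\delta,n)$ among the values $\mathbf{G}^{(k)}(1/t)$ and the algebraic quantities $(1/t)^a$. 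The $\log(1/t)$ entries must also be handled. My plan is to absorb them by also eliminating $\log t$, since $\log(x_k)=\log u-\log(1+kDu)$ and only the one transcendental $\log u$ appears; alternatively, unipotent $\log$ factors can be folded into $Q$-like unknowns. Either way this costs a bounded number of additional relations, which is why $D$ must avoid collisions with the singular points. The choice of $D\le n^2(|\Sigma|+1)^2+\nu$ is made so that the shifted points $s_k$ are pairwise distinct modulo the finitely many translations relating singularities. This guarantees that the product curve is "generic" in the sense below.

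The main obstacle is showing $R'$ is non-trivial, i.e. it does not vanish on the fibre of the Zariski closure $B$ of the graph of $(\mathbf{G}^{(0)},\dots,\mathbf{G}^{(M)})$. Two ingredients are needed. First, a monodromy/independence statement: for generic $D$ the graph closure $B$ contains the product of the individual $B_{\mathbf{P},s_k}$ twisted by one common $Q$. The transcendence degree over $K(u)$ is then $\approx(M+1)\dim B_{\mathbf{P}}^{\mathrm{fib}}$, with the only coupling being through $Q$. I would prove this by a Kolchin/Ax-type argument: the differential Galois groups of the translates are independent because their singular loci $\Sigma-kD$ are disjoint for suitable $D$, which is where the counting bound on $D$ enters. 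Second, granted this, non-triviality of each $R_k$ on $B_{\mathbf{P},s_k}$ cuts each factor's dimension, and elimination of $Q$ cannot recover all the lost dimension once more than $n^2=\dim \GL_n$ factors drop. Hence the value point lies in a subvariety of the fibre of $B$ of too small dimension, giving a non-trivial relation of degree $\le\delta'$.

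Finally, apply \Cref{thm:bombieri} at the archimedean place with $\xi=1/t$. Then $h(\xi)=\log t\to\infty$ while $|\xi|=1/t=e^{-h(\xi)}$, which beats the bound \eqref{bomb2} since its exponent is only $h^{1/(\mu'+1)}$ times logarithmic factors. So for $t>\kappa$ no non-trivial relation of degree $\delta'$ exists, a contradiction. Hence at most $n^2$ of the $M+1$ points fail, and at least $\nu$ lie in $\mathcal{R}_{\mathbf{P},\delta}$.
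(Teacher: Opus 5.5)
Your outline follows the paper's route: the shifted curve with full monodromy as in \autoref{lemmashift}, elimination of the common constant matrix $Q$, \autoref{thm:bombieri} at $\xi=1/t$ with $h(\xi)=\log t$, and pigeonhole for (3). Your check of \eqref{bomb2} via $|\xi|=e^{-h(\xi)}$ is correct. However, the step you call the main obstacle is only asserted, and the idea that makes it work is missing.

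Your first ingredient conflates two closures. The Zariski closure of the graph of the solution tuple $(\mathbf{G}_{\operatorname{log}}(x_k))_k$ does not involve $Q$ at all. It is a torsor for the product of the monodromy groups, and this follows directly from the surjectivity $\pi_1(C)\to\pi_1(\mathbb{P}^1-\Sigma')^N$; no Kolchin/Ax argument is needed. Non-triviality of $R_k$, by contrast, refers to $B_{\mathbf{P},s_k}$, the fibre of a $K$-Zariski closure. In that fibre $B_{\mathbf{G}_{\operatorname{log}},s_k}\cdot B_Q$ is dense, where $B_Q$ is the $K$-Zariski closure of $Q$.

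Non-triviality therefore does not by itself say that $R_k$ is non-zero on the actual translate $B_{\mathbf{G}_{\operatorname{log}},s_k}\cdot Q$ through $\mathbf{P}(s_k)$. If $R_k$ vanished there, it would only constrain $Q$, and $n^2+1$ such relations would say nothing about the $G$-function values. What excludes this is that $R_k$ and $B_{\mathbf{G}_{\operatorname{log}},s_k}$ are defined over $K$. Vanishing on $B_{\mathbf{G}_{\operatorname{log}},s_k}\cdot Q$ is then a $K$-algebraic condition on $Q$, so it holds on all of $B_Q$, contradicting non-triviality; \autoref{imIlem} handles the components through $\mathbf{P}(s_k)$. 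Only then do the $R_k$, which share the $Q$-variables, form a regular sequence on $\prod_k B_{\mathbf{J}}(k)_s\times B_Q$. Projecting away $Q$ then leaves codimension at least (number of relations) $-\dim B_Q$. This is the content of \autoref{deeplemma}, and your one-line dimension count presupposes it.

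Second, your bookkeeping does not close as written. With exactly $n^2+1$ failing points you have $n^2+1$ relations. After folding in $\log t$, you are eliminating $n^2+1$ parameters ($Q$ and $\log t$). The projection of the common zero locus to the $G$-function side can then be dominant, and no relation survives. Your remark that the logarithms ``cost a bounded number of additional relations'' is incompatible with concluding that at most $n^2$ points fail. The choice of $D$ plays no role here; it only serves the monodromy independence of \autoref{lemmashift}.

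The paper handles this as follows:
\begin{itemize}
\item It encodes the logarithms through the auxiliary $G$-functions $E_j=(T(s)T(a_j(s)))^{e}\log(1+jDs)$, so that only $\log s$ is lost when passing from $\mathcal{B}_{\mathbf{J}}$ to $\mathcal{B}_{\mathbf{H}}$.
\item It tracks $\dim B_Q$ rather than $n^2$.
\item In \S\ref{globalrelcodimdropsec} it explicitly requires $N\ge\dim B_Q+2$, so that the image in $\mathcal{B}_{\mathbf{H},s}$ keeps positive codimension.
\end{itemize}
Likewise, the factors $(t+kD)^{-a}$ with $a\in\frac{1}{e}\mathbb{Z}$, which you treat as known algebraic numbers, are not in $K$. \autoref{thm:bombieri}, however, needs a relation with coefficients in $K$ at $\xi\in K$. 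The paper removes these factors by replacing each entry $F$ by $F^{e}$. You need to carry out this accounting explicitly, with the number of failing points strictly exceeding the number of parameters you eliminate.
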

The rest of \S\ref{Gresult} is devoted to the proof of \Cref{thmwithproof}.

\subsubsection{A monodromy lemma}

Expanding on our sketch from \S\ref{strsec}, the first step is to find a rational curve $\overline{C} \subset (\PP^1)^{N}$ where $N = n^2 + 1$, such that denoting $C = \overline{C} \cap (\PP^1 - \Sigma(L)\cup\{0\})^N$ we have that:
\begin{itemize}
    \item[(a)] the natural map $\pi_1(C) \to \pi_1(\PP^1 - (\Sigma(L)\cup\{0\}))^N$ is surjective;
    \item[(b)] the curve $\overline{C}$ is defined over $K$; and
    \item[(c)] $\overline{C}$ passes through the point $(\infty, \cdots, \infty)$ and is tangent to the diagonal at this point.
\end{itemize}
The addition of the point $\{ 0 \}$ to $\Sigma(L)$ will only be necessary in the situation where $\infty \in \Sigma(L)$; since in \S\ref{strsec} we assumed this was not the case, this issue did not arise there. The reader interested only in (1) of \Cref{thmwithproof} can just apply the Bertini theorem (and weak Lefschetz) to find such a $\overline{C}$. Moreover, if such a reader is only interested in the subsequent applications to analytic density, then they may also assume that $s = 0$ is a regular point of the differential operator $L$ (i.e., that $\infty \notin \Sigma$) and consequently ignore both the addition of $\{ 0 \}$ and the appearance of logs and rational exponents in the arguments of the coming subsections.

For the more precise estimate appearing in (2) and (3) we need the following explicit lemma to choose $\overline{C}$.
\begin{lem}\label{lemmashift}
Let $m > 0$ be an integer, and let $\Sigma' \subset \PP^1(\C)$ be a finite set. Then there is an integer $D \leq m^2 |\Sigma'|^2 + 1$ such that the curve $\overline{C} = \overline{C}_D \subset (\PP^1 )^{m^2+1}$ defined as the image of
\begin{displaymath}
     x \mapsto (x, x+D, \dots, x+m^2D)
\end{displaymath}
has full monodromy, i.e., the map $\pi_1(C) \to \pi_1(\PP^1 - \Sigma')^{m^2 + 1}$ is surjective, where $C = \overline{C} \cap (\PP^1 - \Sigma')^{m^2 + 1}$.
\end{lem}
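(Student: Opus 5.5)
The curve $C$ is $\PP^1$ minus the finite set $\Sigma'' := \bigcup_{k=0}^{m^2} (\Sigma' - kD)$ (together with $\infty$ if appropriate). The plan is to choose $D$ so that these translates are pairwise disjoint and then compute the induced map on abelianized and unabelianized fundamental groups by sending loops to loops. Write $\Sigma' = \{\sigma_1,\dots,\sigma_r\}$ with $r = |\Sigma'|$ (dropping $\infty$, which is fixed by all translations and handled separately). The $k$-th coordinate of the map is $x \mapsto x + kD$, which is an isomorphism $\PP^1 \to \PP^1$. It pulls $\Sigma'$ back to $\Sigma' - kD$. A small loop $\gamma_{k,i}$ in $C$ around the point $\sigma_i - kD$ then maps to a loop in the $k$-th factor $\PP^1 - \Sigma'$ around $\sigma_i$. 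In every other factor $k'$ it maps to a null-homotopic loop, provided $\sigma_i - kD \notin \Sigma' - k'D$.

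The key genericity condition is therefore
\[ \sigma_i - \sigma_j \neq (k - k')D \quad \text{for all } i,j \text{ and all } 0 \le k' < k \le m^2 . \]
Each ordered pair $(i,j)$ with $i \neq j$ and each nonzero difference $k - k' \in \{1,\dots,m^2\}$ forbids at most one value of $D$. The pairs with $i = j$ forbid only $D = 0$. Hence at most $r(r-1)m^2 + 1 \le m^2 r^2$ values are excluded, and pigeonhole gives a positive integer $D \le m^2|\Sigma'|^2 + 1$ satisfying the condition. With this $D$, the points $\sigma_i - kD$ are pairwise distinct.

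Next comes surjectivity. Choose a base point $x_0$ far from everything and standard lassos $\gamma_{k,i}$ in $C$ around each point $\sigma_i - kD$. By the computation above, $\gamma_{k,i}$ maps to $(1,\dots,1,\ell_i,1,\dots,1)$, with the lasso $\ell_i$ in the $k$-th slot. This holds only up to conjugation, since the paths from $x_0$ matter; this is the delicate point. The remedy is to choose a single base point and paths in $\PP^1$ avoiding $\Sigma''$, and to observe what the $k'$-th projection of such a path does. It is a path in $\PP^1 - \Sigma'$, and the lasso it carries is contractible in factor $k'$ whenever that factor sees no puncture there. The image of $\gamma_{k,i}$ in factor $k$ is $\alpha \ell_i \alpha^{-1}$ for some $\alpha$, and it is trivial in the other factors.

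Consequently the image of $\pi_1(C)$ contains, for each $k$, the normal closure within the $k$-th factor of the images of all the $\ell_i$. Here the normality comes from the fact that the image already surjects onto each factor: the projection $C \to \PP^1 - \Sigma'$ is the inclusion of $\PP^1$ minus a larger finite set, hence $\pi_1$-surjective. Since the lassos around all punctures normally generate $\pi_1(\PP^1 - \Sigma')$, each factor $\{1\}\times\cdots\times\pi_1(\PP^1-\Sigma')\times\cdots\times\{1\}$ lies in the image, and the product is therefore surjective.

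The main obstacle is making this conjugation bookkeeping rigorous. Concretely, one must show that the subgroup $H$ given by the image intersects the $k$-th factor in a normal subgroup containing the $\ell_i$. I would argue as follows. Let $g \in H$ project onto a chosen element $h$ of factor $k$, which is possible by surjectivity of the projection. Conjugating $\gamma_{k,i}$'s image (trivial outside slot $k$) by $g$ stays trivial outside slot $k$ and conjugates the $k$-th slot by $h$. So $H \cap (\text{factor } k)$ is normal in factor $k$, and it contains the lassos, hence equals the whole factor.
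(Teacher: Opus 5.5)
Your proposal is correct and follows the same route as the paper. You choose $D$ to avoid the finitely many values $(\sigma_i-\sigma_j)/(k-k')$, so that the translates of $\Sigma'\setminus\{\infty\}$ are pairwise disjoint, and you use that a lasso around $\sigma_i-kD$ maps to a lasso around $\sigma_i$ in the $k$-th factor and to a trivial loop in the others. Your normal-subgroup argument, which uses that each projection $C\to\PP^1-\Sigma'$ is $\pi_1$-surjective, is a rigorous version of the paper's terse ``generators go to generators'' step. The point $\infty$ needs no separate treatment, since lassos around the finite punctures already generate each factor $\pi_1(\PP^1-\Sigma')$.
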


\begin{proof}
We select an integer $D$ such that the translated sets $(\Sigma' \setminus \{ \infty \}) - jD$ are pairwise disjoint for $j = 0, 1, \dots, m^2$. This disjointness is satisfied provided that $jD \neq \sigma_1 - \sigma_2$ for any $\sigma_1, \sigma_2 \in (\Sigma' \setminus \{ \infty \})$. As there are at most $|\Sigma'|^2$ such differences and $m^2$ non-zero multipliers, there are at most $m^2|\Sigma'|^2$ forbidden values for $D$, showing that such a $D$ can be chosen with $D \le m^2|\Sigma'|^2 + 1$. 

Now $\overline{C} = \overline{C}_{D}$ passes through the point $(\infty, \hdots, \infty)$, so the required surjectivity on fundamental groups reduces to showing the surjectivity of the map 
\[ \pi_1(C) \to \pi_1(\mathbb{A}^1 - (\Sigma' \setminus \{ \infty \}))^{m^2 + 1} . \]
The fundamental group of $\pi_1(C)$ is generated by loops around $(m^2 + 1) |\Sigma' \setminus \{ \infty \}|$ punctures, and the fundamental group of each $\mathbb{A}^1 - (\Sigma' \setminus \{ \infty \})$ is generated by the corresponding loops around points of $\Sigma'$. By construction the map on fundamental groups sends generators to generators, so the result follows.
\end{proof}

For the rest of the section we focus on the proof of \autoref{thmwithproof}(2), which means we will work with the above $\overline{C}_D$ curve with $\Sigma' = \Sigma \cup \{ 0 \}$, rather than an unspecified curve $\overline{C}$ that satisfies conditions (a), (b) and (c) above. The proof of \autoref{thmwithproof}(1) with an arbitrary curve $\overline{C}$ satisfying (a), (b) and (c) as above proceeds similarly with minor modifications. (The tangency condition in (c) is necessary to handle the logarithmic factors appearing in the solutions of $L$, and carry out a calculation analogous to \eqref{eqlog}.) We prefer the more explicit approach as we are interested in more precise density statements; note that \autoref{thmwithproof}(2) already implies \autoref{thmwithproof}(1). 

\subsection{Preliminary constructions}

The points $x = t$ in the second part of \autoref{thmwithproof} correspond to the parameter values $s = 1/t$. To study the values of $\mathbf{P}(1/t)$, we construct various functions centered at $\infty$ and express them in terms of the uniformizing parameter $s$. A priori these functions may be multi-valued and undefined at $\infty$ due to the presence of logarithmic factors $\log(s)$ and roots of $s$, so we instead work on a punctured disk around $\infty$ with a branch cut, where we assume the branch cut does not contain any $K$-points. This branch cut is chosen compatibly with the one that we used to evaluate $\mathbf{P}$. We write $A \subset \PP^1(\mathbb{C})$ for this punctured disc. 

\subsubsection{An auxiliary $G$-operator}
\label{auxGopsec}

We fix a fundamental matrix $\mathbf{G}_{\operatorname{log}}$ defined over $K$ as in the discussion following \Cref{Gopdef}. 
As explained above, there is a matrix $Q \in \GL_{n}(\mathbb{C})$ such that $ \mathbf{G}_{\operatorname{log}} \cdot Q = \mathbf{P}$ identically on $A$ (cf. \eqref{Qacts}). Moreover, each entry $G_{ij}$ of $\mathbf{G}_{\operatorname{log}}$ can be written in a unique way as a sum
\begin{equation}
\label{Nfacsol}
G_{ij}(s) = \sum_{r = 0}^{N(L)} G^{(r)}_{ij}(s) \log(s)^{r} , 
\end{equation}
where each $G^{(r)}_{ij}(s)$ is an element of $\mathcal{G}$ which is log-free. This follows from \eqref{Geq} by grouping the terms with a common power of $\log(s)$. Here $N(L)$ is the nilpotency index of $L$, cf. \Cref{indexdef}.

For any $b \in \N$, there is a $G$-operator $L_{\log,b}$ with solution basis
\begin{displaymath}
    1, \log(s), \dots, \log(s)^b.
\end{displaymath}
The operator
\begin{equation}\label{eqLG}
    L_G:= L \otimes L_{\log, N(L)}
\end{equation}
is a $G$-operator (cf. the discussion following \Cref{ACK}). By \autoref{removeloglem} below the solutions of $L_{G}$ include all of the $G^{(r)}_{ij}$ appearing in \eqref{Nfacsol}.

From this construction we also observe that the non-apparent singularities of $L$ and $L_G$ are related:
\begin{displaymath}
    \Sigma(L_G) \subset \Sigma (L) \cup \{0\},
\end{displaymath}
since $\Sigma (L_{\log, N(L)})=\{0,\infty\}$. (If $\infty \notin \Sigma(L)$, we just have $L=L_G$, since there are no logarithmic factors appearing.) We write $\mathbf{G}$ for vector consisting of those solutions of $L_{G}$ which are of the form $G^{(r)}_{ij}$, as above; this gives us a vector $\mathbf{G}$ of some length $m$. Note that $m \leq n^2 N(L)$.

\begin{lem}
\label{removeloglem}
Suppose that $L$ is a differential operator that kills an expression of the form
\begin{equation}
\label{Nfacsol2}
G(s) = \sum_{r = 0}^{r_{0}} G^{(r)}(s) \log(s)^{r} \in \mathcal{G} , 
\end{equation}
with $r_{0} \leq N(L)$ and with each $G^{(r)}$ log-free. Then each coefficient $G^{(r)}$ is a solution of $L \otimes L_{\operatorname{log},r_{0}}$.
\end{lem}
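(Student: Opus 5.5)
The plan is to exploit the fact that the solution space of $L$ is stable under a ``shift of the logarithm'', and then to recover each coefficient $G^{(r)}$ by Vandermonde inversion and a Taylor-type identity. Concretely, for $\lambda \in \mathbb{C}$ consider the operation $\varphi_\lambda$ that replaces $\log(s)$ by $\log(s)+\lambda$ in expressions of the form \eqref{Geq}, leaving the log-free parts (including the factors $s^{a}$) untouched.

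First I would justify that $\varphi_\lambda$ sends solutions of $L$ to solutions of $L$, at least for enough values of $\lambda$. The cleanest route is analytic. Let $e$ be the common denominator of the exponents $a_i$ occurring in the $G^{(r)}$, and let $T$ denote analytic continuation of a solution once around $s=0$. Then $T^{ek}$ fixes every $s^{a_i}$ and every (single-valued, log-free) $G$-function factor, while sending $\log(s)\mapsto \log(s)+2\pi i e k$. Since $L$ has coefficients in $K[s]$, analytic continuation preserves its solution space. Hence
\[ G_{\lambda_k}(s) := \sum_{r=0}^{r_0} G^{(r)}(s)\,(\log(s)+\lambda_k)^r, \qquad \lambda_k = 2\pi i e k, \]
is a solution of $L$ for $k=0,\dots,r_0$. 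Alternatively, one can check directly that $\varphi_\lambda$ is a differential-algebra automorphism of $\mathcal{G}$: it commutes with $\partial$ because $\partial \log(s) = 1/s$ is $\lambda$-independent. This needs the algebraic independence of $\log(s)$ over the log-free part, i.e. the uniqueness of the representation \eqref{Nfacsol2}. I expect this justification to be the main (mild) obstacle, and the monodromy argument is the way I would handle it.

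Next, expand in powers of $\lambda$:
\[ G_\lambda = \sum_{k=0}^{r_0} \lambda^k H_k, \qquad H_k := \sum_{r\ge k} \binom{r}{k} G^{(r)} \log(s)^{r-k}. \]
Because the values $\lambda_0,\dots,\lambda_{r_0}$ are distinct, the Vandermonde matrix $(\lambda_k^j)$ is invertible. Each $H_k$ is therefore a $\mathbb{C}$-linear combination of the $G_{\lambda_k}$, hence a solution of $L$.

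Finally, invert the triangular relation. Viewing $G$ as a polynomial in an indeterminate $\ell$ standing for $\log(s)$, we have $H_k = \frac{1}{k!}\partial_\ell^k G$. Taylor expanding $\frac{1}{r!}\partial_\ell^r G$ at $\ell=0$ around $\ell=\log(s)$ gives
\[ G^{(r)} = \sum_{j=0}^{r_0-r} \binom{r+j}{r}\, H_{r+j}\,(-\log(s))^{j}. \]
Each term is a product of a solution $H_{r+j}$ of $L$ with an element of the span of $1,\log(s),\dots,\log(s)^{r_0}$, which is the solution space of $L_{\log,r_0}$. By the definition of the tensor product, $G^{(r)}$ is then a solution of $L\otimes L_{\log,r_0}$, as claimed.
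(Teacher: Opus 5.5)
Your proof is correct and follows essentially the paper's route. Both arguments use that continuing $G$ around $s=0$ a multiple of $e$ times replaces $\log(s)$ by $\log(s)+2\pi i e k$ and so produces new solutions of $L$. The paper then isolates coefficients by iterated differences $G_{i+1}=G_i-\mathcal{M}[G_i]$ and an induction on $i$, whereas you use Vandermonde interpolation followed by the explicit inversion $G^{(r)}=\sum_{j}\binom{r+j}{r}H_{r+j}(-\log(s))^{j}$; this is the same linear algebra made fully explicit, and it cleanly replaces the paper's rather terse inductive step.
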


\begin{proof}
The coefficients $G^{(r)}$ all have finite monodromy, so after analytically continuing $G$ around $s = 0$ finitely many times we arrive at some element $\mathcal{M}[G] \in \mathcal{G}$ which has the same form as \eqref{Nfacsol2} except that each $\log(s)$ is replaced by $\log(s) + e 2 \pi i$ with $e > 0$ an integer. Setting $G_{1} = G - \mathcal{M}[G]$ we obtain a solution of $L$ where the highest power of $\log(s)$ is $r_{0} - 1$. Iterating this process, we get a sequence
\[ G = G_{0}, G_{1}, G_{2}, \hdots, G_{r_{0}} \]
where $G_{i+1} = G_{i} - \mathcal{M}[G_{i}]$ and $G_{r_{0}}(s) = G^{(r_0)}(s)$, and
\[ G_{i}(s) = \sum_{r = 0}^{r_{0} - i} G^{(r)}_{i}(s) \log(s)^r \]
where $G^{(r)}_{i}$ is a linear combination of the $G^{(r')}$ for $i + r \leq r' \leq r_0$. By construction, each $G_{i}$ is a solution to $L$, and arguing by induction we learn that $G^{(r)}_{i}$ is a solution to $L \otimes L_{\operatorname{log}, r_{0} - i}$ for all applicable $r, i$.
\end{proof}

\subsubsection{Bundle constructions} 
\label{bundconstrsec}

\paragraph{Zariski closures of solutions:} We recall the following general setup. Suppose that $M$ is a differential operator of rank $n$, and that $\mathbf{M} : A \to \GL_{n}(\mathbb{C})$ is a fundamental matrix of solutions of $M$. Then it is a general fact (cf. \cite[Ch 1, Thm. 1.28]{zbMATH01860838}) that the (complex) Zariski closure of the graph of $\mathbf{M}$ in $(\mathbb{P}^1 - \Sigma(M)) \times \GL_{n}$ is a torsor for the differential Galois group of $M$, which naturally acts on the right. When $M$ has regular singularities, this differential Galois group is also the algebraic monodromy group of $M$. We denote this group by $H = H_{M}$, and write $H^{\circ}$ for its identity component.

Let $\mathbf{J}$ be a fundamental matrix for $L_{G}$ whose solutions are products of those that comprise the entries of $\mathbf{G}_{\operatorname{log}}$ as well as the solutions $1, \log(s), \hdots, \log(s)^{N(L)}$ to $L_{\operatorname{log}, N(L)}$. We define $B_{\mathbf{J}}$ to be the complex Zariski closure in $\mathbb{P}^1 \times \GL_{(N(L) + 1) n}$ of the graph of $\mathbf{J}$ restricted to $A$. Once again, over $\mathbb{P}^1 - \Sigma(L_{G})$, $B_{\mathbf{J}}$ is a torsor for the algebraic monodromy group of $L_{G}$. We likewise define $B_{\mathbf{G}_{\operatorname{log}}}$ as the Zariski closure of in $\mathbb{P}^1 \times \GL_{n}$ of the graph of $\mathbf{G}_{\operatorname{log}}$ restricted to $A$. This is a torsor for the algebraic monodromy group of $L$.

\begin{lem}\label{lemdefoverK} 
The complex subvariety $B_{\mathbf{J}} \subset \mathbb{P}^1 \times \GL_{(N(L) + 1) n}$ (resp. $B_{\mathbf{G}_{\operatorname{log}}}$) is defined over $K$. In particular, it is equal to the $K$-Zariski closure of the graph of $\mathbf{J}$ (resp. $\mathbf{G}_{\operatorname{log}}$) on $A$.
\end{lem}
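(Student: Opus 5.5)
The plan is to use a Galois-descent argument: a Zariski closure that is stable under all automorphisms of $\mathbb{C}$ fixing $K$ is defined over $K$. I treat $B_{\mathbf{G}_{\operatorname{log}}}$ first; the case of $B_{\mathbf{J}}$ is identical, since $\mathbf{J}$ is built from products of entries of $\mathbf{G}_{\operatorname{log}}$ with $1,\log(s),\dots,\log(s)^{N(L)}$.

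The first step is to work formally rather than analytically. By the discussion following \Cref{ACK}, the Frobenius method applied to the $G$-operator $L$ over $K$ gives a fundamental matrix $\mathbf{G}_{\operatorname{log}}$ whose entries are finite sums $\sum s^{a}\log(s)^b F(s)$ with $a\in\mathbb{Q}$ and $F\in K((s))$. Let $e=e(L)$. Substitute $u = s^{1/e}$ and regard $\ell=\log(s)$ as an independent variable, differentially transcendental over $K((u))$ with $\partial \ell = 1/s$. Then every entry of $\mathbf{G}_{\operatorname{log}}$ lies in the differential ring $K((u))[\ell]$.

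Next I identify the ideal of the closure with a formal ideal. Let $I\subset \mathbb{C}[s][T_{ij}, \det^{-1}]$ be the ideal of polynomials vanishing on the graph of $\mathbf{G}_{\operatorname{log}}$ over $A$. Convergent expressions in $\mathbb{C}\{\{u\}\}[\ell]$ embed into the corresponding analytic functions on $A$ (with our branch cut), because $\log$ is transcendental over the field of convergent Laurent series in $u$. Hence a polynomial $R$ vanishes on the graph over $A$ if and only if $R(s,\mathbf{G}_{\operatorname{log}})=0$ in $\mathbb{C}((u))[\ell]$. So $I$ is the kernel of the evaluation homomorphism
\[ \mathbb{C}[s][T,\det^{-1}] \to \mathbb{C}((u))[\ell], \qquad T\mapsto \mathbf{G}_{\operatorname{log}}. \]
This map is the base change to $\mathbb{C}$ of the $K$-linear map $K[s][T,\det^{-1}]\to K((u))[\ell]$.

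The kernel then descends by flatness. Since $\mathbb{C}$ is flat over $K$ and $K((u))[\ell]\otimes_K\mathbb{C}\hookrightarrow \mathbb{C}((u))[\ell]$ is injective, the kernel of the complex map is $I_K\otimes_K \mathbb{C}$, where $I_K$ is the kernel over $K$. For the injectivity, note that a $K$-linearly independent family in $K((u))$ stays $\mathbb{C}$-linearly independent in $\mathbb{C}((u))$; one checks this coefficientwise through a nonzero minor of finitely many coefficient vectors. Therefore $B_{\mathbf{G}_{\operatorname{log}}}=V(I_K\otimes\mathbb{C})$ is defined over $K$ and equals the $K$-Zariski closure, which proves the second assertion. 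Taking the closure in $\mathbb{P}^1\times\GL$ rather than $\mathbb{A}^1$ changes nothing.

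The main obstacle is making rigorous that analytic vanishing on $A$ is equivalent to formal vanishing in $\mathbb{C}((u))[\ell]$. This requires that $\log(s)$ be algebraically independent over the convergent Laurent series in $u$, and that the expansions converge on $A$, which holds because $G$-functions have positive radius of convergence. I would justify the independence by monodromy: continuing around $s=0$ a total of $e$ times replaces $\ell$ by $\ell+2\pi i e$, and a minimal-degree relation in $\ell$ would then contradict minimality, exactly as in \Cref{removeloglem}.
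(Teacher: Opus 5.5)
Your argument is correct, and its core is the same as the paper's. An analytic relation $R(s,\mathbf{G}_{\operatorname{log}}(s))=0$ on $A$ is equivalent to the vanishing of every coefficient after expanding in powers of $\log(s)$ in a formal Laurent series ring, and all the data there have coefficients in $K$.

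The difference is in the descent step. The paper notes that this formal vanishing condition is preserved by $R\mapsto R^{\sigma}$ for $\sigma\in\operatorname{Aut}(\mathbb{C}/K)$, and concludes that the ideal is defined over $K$. This implicitly uses the standard fact that an $\operatorname{Aut}(\mathbb{C}/K)$-stable ideal is generated by polynomials over $K$. You instead realize the ideal as the kernel of an evaluation map that is the base change of one defined over $K$. Flatness together with the injectivity of $K((u))[\ell]\otimes_K\mathbb{C}\to\mathbb{C}((u))[\ell]$ then gives $I=I_K\otimes_K\mathbb{C}$ directly. This is more self-contained, and it yields both assertions of the lemma at once: definability over $K$ and equality with the $K$-Zariski closure. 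You are also more careful than the paper about fractional exponents, passing to $u=s^{1/e}$ where the paper writes the coefficients in $K((s))$. Likewise, you justify the transcendence of $\log(s)$ by monodromy rather than taking it for granted.

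A few cosmetic fixes are needed.
\begin{enumerate}
\item Your opening sentence announces Galois descent, but the argument you actually run never uses $\operatorname{Aut}(\mathbb{C}/K)$.
\item $\ell$ should be described as transcendental over $K((u))$, i.e.\ a free polynomial variable, not ``differentially transcendental'', since $\partial\ell=1/s$.
\item Since you invert $\det$, note that $\det\mathbf{G}_{\operatorname{log}}$ is log-free (local monodromy multiplies it by a root of unity), hence a unit of $K((u))[\ell]$. Alternatively, work in $K[s][T]$ and localize afterwards.
\end{enumerate}
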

\begin{proof}
We prove the claim for $B_{\mathbf{J}}$, as the argument for $B_{\mathbf{G}_{\operatorname{log}}}$ is analogous. Consider a complex analytic function $F(s)$ which is a polynomial in $\log(s)$ with coefficients in $K((s))$. Given a complex polynomial $R \in \mathbb{C}[x_{1}, \hdots, x_{\ell}]$ and objects $F_{1}, \hdots, F_{\ell}$ of this type, the relation $R(F_{1}, \hdots, F_{\ell}) = 0$ holds if and only if after expanding the result as a polynomial in $\log(s)$, each coefficient vanishes in the ring $\mathbb{C}((s))$. This condition is stable under replacing $R$ with $R^{\sigma}$ for any automorphisms $\sigma \in \textrm{Aut}(\mathbb{C}/K)$. The ideal defining the intersection of $B_{\mathbf{J}}$ with $(\mathbb{P}^1 - \Sigma(L_{G})) \times \GL_{(N(L)+1)n}$ is spanned by polynomials of this type in the natural coordinates on the product. This ideal is therefore defined over $K$, hence so is the associated variety $B_{\mathbf{J}}$.
\end{proof}

 
We consider the algebraic map over $\mathbb{P}^1$
\begin{displaymath}
    v : B_{\mathbf{G}_{\operatorname{log}}} \times B_{Q} \subset \mathbb{P}^1 \times \GL_{n} \times \GL_{n} \to \mathbb{P}^1 \times \GL_{n}, \hspace{2em} (s, A, B) \mapsto (s, A \cdot B) .
\end{displaymath}
For each $s \in A\subset \PP^1(\mathbb{C})$ we have $v(s, \mathbf{G}_{\operatorname{log}}(s), Q) = (s, \mathbf{P}(s))$. Since  $B_{\mathbf{G}_{\operatorname{log}}}$ is defined over $K$ (by the above lemma), so is $v$. As the image $I = v(B_{\mathbf{G}_{\operatorname{log}}} \times B_{Q})$ contains the graph of $\mathbf{P}$ it follows that the $K$-Zariski closure $\overline{I}^{\textrm{Zar}}$ contains $B_{\mathbf{P}}$. 

\begin{lem}
We have $\overline{I}^{K-\operatorname{Zar}} = B_{\mathbf{P}}$.
\end{lem}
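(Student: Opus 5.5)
The inclusion $B_{\mathbf{P}} \subset \overline{I}^{K-\operatorname{Zar}}$ was observed just before the statement, so it remains to prove $\overline{I}^{K-\operatorname{Zar}} \subset B_{\mathbf{P}}$. Here $B_Q$ is the $K$-Zariski closure of the point $Q \in \GL_n(\mathbb{C})$, and $B_{\mathbf{P}}$ is a $K$-closed set. It therefore suffices to show that $I$ itself lies in $B_{\mathbf{P}}$, i.e. that $v^{-1}(B_{\mathbf{P}}) \supseteq B_{\mathbf{G}_{\operatorname{log}}} \times B_Q$.

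Since $v$ is defined over $K$ and $B_{\mathbf{P}}$ is $K$-closed, the preimage $Z := v^{-1}(B_{\mathbf{P}}) \cap (B_{\mathbf{G}_{\operatorname{log}}} \times B_Q)$ is a $K$-closed subset. I will argue in two stages, fixing one factor at a time.

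\textbf{Step 1: the slice at $Q$.} Consider $W := \{ (s,A) \in B_{\mathbf{G}_{\operatorname{log}}} : (s, A Q) \in B_{\mathbf{P}} \}$. This is a complex Zariski closed subset of $B_{\mathbf{G}_{\operatorname{log}}}$, though not necessarily defined over $K$. It contains the graph of $\mathbf{G}_{\operatorname{log}}$ on $A$, because $\mathbf{G}_{\operatorname{log}}(s) Q = \mathbf{P}(s)$. By \Cref{lemdefoverK}, $B_{\mathbf{G}_{\operatorname{log}}}$ is also the complex Zariski closure of that graph. Hence $W = B_{\mathbf{G}_{\operatorname{log}}}$, so $B_{\mathbf{G}_{\operatorname{log}}} \times \{Q\} \subset Z$. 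This is the one place where \Cref{lemdefoverK} is essential: with only $K$-closure information we could not conclude this, since $Q$ is transcendental.

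\textbf{Step 2: spreading out in the $Q$-variable.} Set $Y := \{ B \in \GL_n : B_{\mathbf{G}_{\operatorname{log}}} \times \{B\} \subset Z \}$. We can write $Y = \bigcap_{p \in B_{\mathbf{G}_{\operatorname{log}}}} \{ B : (p,B) \in Z \}$, so $Y$ is Zariski closed. It is also stable under $\operatorname{Aut}(\mathbb{C}/K)$, because $Z$ and $B_{\mathbf{G}_{\operatorname{log}}}$ are defined over $K$. A Galois-stable closed subset is defined over $K$; more concretely, its ideal is generated by the polynomials expressing the conditions ``$R(A B) \in I(B_{\mathbf{G}_{\operatorname{log}}})$'' for $R$ ranging over $K$-generators of $I(B_{\mathbf{P}})$, and these conditions have $K$-coefficients after expanding in a $K$-basis. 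By Step 1, $Q \in Y$. Since $B_Q$ is the $K$-Zariski closure of $Q$, we get $B_Q \subset Y$, which gives $B_{\mathbf{G}_{\operatorname{log}}} \times B_Q \subset Z$.

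Therefore $I \subset B_{\mathbf{P}}$, so $\overline{I}^{K-\operatorname{Zar}} \subset B_{\mathbf{P}}$, and equality follows. The main subtle point is Step 2, namely making the statement ``defined over $K$'' precise for $Y$. I would handle it with the explicit coefficient-expansion argument, which mirrors the proof of \Cref{lemdefoverK}, rather than with abstract descent.
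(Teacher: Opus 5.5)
Your proof is correct and follows the paper's route. The paper likewise reduces to showing $v^{-1}(B_{\mathbf{P}}) \supseteq B_{\mathbf{G}_{\operatorname{log}}} \times B_Q$, uses \Cref{lemdefoverK} to replace $\{(s,\mathbf{G}_{\operatorname{log}}(s),Q)\}$ by $B_{\mathbf{G}_{\operatorname{log}}} \times \{Q\}$, and then spreads $Q$ out to its $K$-Zariski closure $B_Q$. Your Step 2, which shows $Y$ is $K$-closed by expanding in a $K$-basis of the coordinate ring of $B_{\mathbf{G}_{\operatorname{log}}}$, is a more explicit version of the paper's one-line claim that a $K$-function vanishing on $B_{\mathbf{G}_{\operatorname{log}}} \times \{Q\}$ comes from a function vanishing on $B_Q$.
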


\begin{proof}
It suffices to show the other inclusion. Since $v^{-1}(B_{\mathbf{P}})$ contains all the points in the set $W = \{(s, \mathbf{G}_{\operatorname{log}}(s), Q) : s \in A \}$, it suffices to show that the $K$-Zariski closure of $W$ is $B_{\mathbf{G}_{\operatorname{log}}} \times B_{Q}$. Observe from \autoref{lemdefoverK} that this is the same as the $K$-Zariski closure of $B_{\mathbf{G}_{\operatorname{log}}} \times \{ Q \}$. But any $K$-algebraic function vanishing on $B_{\mathbf{G}_{\operatorname{log}}} \times \{ Q \}$ vanishes identically on the first factor, and therefore comes from a function vanishing on $B_{Q}$.
\end{proof}
From now on we consider $v$ as a $K$-algebraic map $B_{\mathbf{G}_{\operatorname{log}}} \times B_{Q} \to B_{\mathbf{P}}$. 

\begin{lem}
\label{imIlem}
If $s \in (\mathbb{P}^1 - \Sigma(L))(K)$ and $C \subset B_{\mathbf{G}_{\operatorname{log}},s}$ is a $K$-connected (= $K$-irreducible) component, then $v(C \times B_{Q})$ is $K$-Zariski dense in the $K$-irreducible component of $B_{\mathbf{P},s}$ which contains it.
\end{lem}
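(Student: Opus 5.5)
Since $C$ and $B_Q$ are both $K$-irreducible (for $B_Q$: it is the $K$-Zariski closure of the single point $Q$, hence $K$-irreducible), the product $C \times B_Q$ is $K$-irreducible. More precisely, $C\times B_Q$ is the $K$-Zariski closure of $C\times\{Q\}$: a $K$-polynomial vanishing on $C \times \{Q\}$, expanded in a $K$-basis of polynomials in the first variables, has coefficients that are $K$-polynomials in the second variables vanishing at $Q$, hence on $B_Q$. The image of a $K$-irreducible variety under the $K$-morphism $v$ is again $K$-irreducible. So $\overline{v(C \times B_Q)}^{K\text{-Zar}}$ is a $K$-irreducible closed subvariety of $B_{\mathbf{P},s}$. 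The statement thus amounts to showing that this closure is a full $K$-irreducible component, i.e.\ that it is not strictly contained in a bigger $K$-irreducible closed subset of $B_{\mathbf{P},s}$.

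I would do this by a dimension/covering argument. First, the fibres $B_{\mathbf{G}_{\log},s}$ for $s \notin \Sigma(L)$ are torsors under the (complex) monodromy group $H$. Since $B_{\mathbf{G}_{\log}}$ and $B_{\mathbf{P}}$ are defined over $K$, $H$ descends to a $K$-group $H_K$ acting on the right, and $B_{\mathbf{P}}$ is likewise stable under the right $H_K$-action. The $K$-components of $B_{\mathbf{G}_{\log},s}$ are unions of $H^\circ$-cosets, and they all have the same dimension $\dim H$. Second, the previous lemma gives $\overline{I}^{K\text{-Zar}} = B_{\mathbf{P}}$. Over the generic point, $B_{\mathbf{G}_{\log}} \times B_Q \to B_{\mathbf{P}}$ is dominant, and both sides are $H$-equivariant, where $H$ acts on $B_Q$ trivially and on the target via right multiplication after $Q$. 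Hence $B_{\mathbf{P},s} = v(B_{\mathbf{G}_{\log},s} \times B_Q)$ up to closure, and this is a finite union of the closures $\overline{v(C_i \times B_Q)}$ over the $K$-components $C_i$ of $B_{\mathbf{G}_{\log},s}$. For this step I would use that $v$ is compatible with restriction to the fibre over $s$, together with the fact that the fibre of a torsor over a non-singular $K$-point is the specialization of the generic torsor. Then any $K$-irreducible component $Z$ of $B_{\mathbf{P},s}$ is covered by finitely many irreducible closed sets $\overline{v(C_i\times B_Q)}$, so it equals one of them. If $\overline{v(C\times B_Q)}\subset Z$, I would show it is equal to $Z$ by showing that all the sets $\overline{v(C_i\times B_Q)}$ have the same dimension. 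This follows because the $C_i$ are permuted transitively by the right $H_K(\overline K)$-action up to Galois conjugation, and $v$ is equivariant.

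The main obstacle is the specialization step: identifying $B_{\mathbf{P},s}$ with the closure of $v(B_{\mathbf{G}_{\log},s}\times B_Q)$ for $s$ a $K$-point, rather than only generically. I would handle it via the torsor structure. Over $\mathbb{P}^1-\Sigma(L)$, the set $B_{\mathbf{P}}$ is stable under the right action of $Q^{-1}HQ$-type groups, so all fibres are equidimensional. Consequently the image of the flat family $B_{\mathbf{G}_{\log}}\times B_Q$ restricted to $s$ has the correct dimension and closure in each component. If this causes trouble, I would instead argue directly with dimensions: $\dim \overline{v(C\times B_Q)} \geq \dim v(C\times\{Q\}) = \dim C = \dim H$, using that right multiplication by $Q$ is injective. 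On the other hand, every component of $B_{\mathbf{P},s}$ has dimension at most $\dim B_{\mathbf{P}} - 1$. I would then check that this upper bound equals $\dim(v(C\times B_Q))$, using the equality $\overline{I} = B_{\mathbf{P}}$ and equidimensionality of the fibres of $v$ coming from the equivariance.
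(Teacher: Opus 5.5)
\textbf{Verdict: genuine gap.} The gap is at the step you yourself flag as the main obstacle, and the tools you propose for it are false in general. You need to compare the fibre over the special point $s\in(\mathbb{P}^1-\Sigma(L))(K)$ with $B_{\mathbf P}$. That means proving either $\overline{v(B_{\mathbf{G}_{\operatorname{log}},s}\times B_Q)}=B_{\mathbf P,s}$, or at least $\dim v(C\times B_Q)=\dim B_{\mathbf P}-1$. The proposal proves neither. Write $B_{\mathbf{G}_{\operatorname{log}},s}=r_sH$.
\begin{itemize}
\item There is no right $H_K$-action on $B_{\mathbf P}$. If $Q\in\GL_n(K)$, then $B_{\mathbf P,s}=r_sHQ$, which is right-stable under $Q^{-1}HQ$ but not under $H$ unless $Q$ normalizes $H$.
\item Stability under $Q^{-1}HQ$ holds for the complex Zariski closure of the graph of $\mathbf P$, not in general for its $K$-closure, whose fibre contains $r_sHB_Q$.
\item $v$ is not equivariant for any right action on the target: $v(gh,q)=gq\cdot(q^{-1}hq)$ depends on $q$.
\item The fibres of $v$ are not equidimensional. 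The fibre over $(s,r_shq_0)$ is isomorphic to $B_Q\cap Hq_0$, whose dimension can jump with $q_0$.
\item The lower bound $\dim v(C\times\{Q\})=\dim H$ measures the wrong thing; the relevant dimension is that of $HB_Q$.
\end{itemize}
So neither the specialization claim nor the claim that all $\overline{v(C_i\times B_Q)}$ have the same dimension is justified as written. The latter is true, but not for the reason you give.

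What is missing is a mechanism relating different fibres. The paper does this analytically. Local analytic trivializations of the torsor $B_{\mathbf{G}_{\operatorname{log}}}$ make $I=v(B_{\mathbf{G}_{\operatorname{log}}}\times B_Q)$ a topologically locally trivial family over $\mathbb{P}^1-\Sigma$, so $I_s$ is dense in $B_{\mathbf P,s}$. After a finite \'etale base change making the monodromy connected, the pieces $raH^{\circ}B_Q$ become fibres of such a family, hence have equal dimension, so no one closure properly contains another.

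Within your algebraic set-up there is a short repair:
\begin{itemize}
\item Each regular fibre is a single coset, so $v(B_{\mathbf{G}_{\operatorname{log}},s}\times B_Q)=r_s\cdot HB_Q$ is a left translate of one fixed constructible set. Its dimension $d$ is therefore independent of $s$, and using $\overline I=B_{\mathbf P}$ we get $\dim B_{\mathbf P}=d+1$.
\item The complex components of the $K$-irreducible $B_{\mathbf P}$ are Galois conjugate and all dominate $\mathbb{P}^1$, so $B_{\mathbf P,s}$ is pure of dimension $d$.
\item Let $B_Q^{(i)}$ be the geometric components of $B_Q$; note $H^{\circ}$ is defined over $K$. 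The irreducible pieces $r_saH^{\circ}B_Q^{(i)}=(r_sar_s^{-1})\cdot r_sH^{\circ}B_Q^{(i)}$ are left translates and Galois conjugates of one another. Hence all have dimension $d$, and their closures are components of $B_{\mathbf P,s}$.
\end{itemize}
This is the correct form of your equivariance: the symmetry of the fibre that $v$ respects is left multiplication by $r_sHr_s^{-1}$, not right multiplication by $H$.

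Finally, $C\times B_Q$ need not be $K$-irreducible when $C$ is not geometrically irreducible (compare $\{x^2=-1\}\times\{y^2=-1\}$ over $\mathbb{Q}$), and your closure argument does not show it. The lemma's wording tacitly assumes this too. The argument above gives the robust form: every $K$-component of $\overline{v(C\times B_Q)}$ is a $K$-component of $B_{\mathbf P,s}$.
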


\begin{proof}
Choose a $K$-algebraic finite \'etale cover $\rho : Y \to (\mathbb{P}^1 - \Sigma(L))$ such that the algebraic monodromy of the differential operator $\rho^{*} L$ over $Y$ is connected. We may lift our fixed disk-with-branch-cut $A \subset (\mathbb{P}^1 - \Sigma(L))(\mathbb{C})$ to an open region $A' \subset Y(\mathbb{C})$ and construct solutions $\mathbf{G}'_{\operatorname{log}} : A' \to \GL_{n}(\mathbb{C})$ and $\mathbf{P}' : A' \to \GL_{n}(\mathbb{C})$ such that $\mathbf{G}'_{\operatorname{log}} \cdot Q = \mathbf{P}'$. Considering the analogously-constructed bundles $B_{\mathbf{G}'_{\operatorname{log}}}$ and $B_{\mathbf{P}'}$ we may construct a commutative diagram 
\begin{center}
\label{bunddiag}
\begin{tikzcd}
B_{\mathbf{G}'_{\operatorname{log}}} \times B_{Q} \arrow[r, "v'"] \arrow[d, "\mu"] & B_{\mathbf{P}'} \arrow[d, "w"] \\
B_{\mathbf{G}_{\operatorname{log}}} \times B_{Q} \arrow[r, "v"] & B_{\mathbf{P}} 
\end{tikzcd}
\end{center}
where $\mu$ is finite \'etale. 

We claim that the image $I' = v'(B_{\mathbf{G}'_{\operatorname{log}}} \times B_{Q})$ (resp. $I = v(B_{\mathbf{G}_{\operatorname{log}}} \times B_{Q})$) is a topologically trivial family of constructible sets over $Y$ (resp. $\mathbb{P}^1 - \Sigma$). To check this, it suffices to replace $K$ with $\mathbb{C}$ and work in the complex analytic topology. Then around each point of $(\mathbb{P}^1 - \Sigma)(\mathbb{C})$ we can find some small analytic open neighbourhood $U \subset (\mathbb{P}^1 - \Sigma)(\mathbb{C})$ such that $B_{\mathbf{G}_{\operatorname{log}}}$ is trivial, and the map $v$ is identified with the map
\[ U \times r H \times B_{Q} \to B_{\mathbf{P},s} \subset U \times \GL_{m}; \hspace{2em} (u, rh, q) \mapsto (u, rh \cdot q) \]
whose image is evidentally a trivial family over $U$. The same argument applies to $B_{\mathbf{G}'_{\operatorname{log}}}$. Note that this also shows that, for each $y \in Y(\mathbb{C})$ (resp. $s \in (\mathbb{P}^1 - \Sigma)(\mathbb{C})$), each $I'_{y}$ is dense in $B_{\mathbf{P}',y}$ (resp. $B_{\mathbf{P}',s}$).

We show that $w$ additionally has the property that, for each $s \in (\mathbb{P}^1 - \Sigma)(K)$, each $K$-irreducible component of $w^{-1}(B_{\mathbf{P},s})$ surjects onto a $K$-irreducible component of $B_{\mathbf{P},s}$. Since $I'_{s}$ (resp. $I_{s}$) is $K$-Zariski dense in $B_{\mathbf{P}',s}$ (resp. $B_{\mathbf{P},s}$), this reduces to the same problem for the map of constructible $K$-algebraic sets $I'_{s} \to I_{s}$. If we view $B_{\mathbf{G}_{\operatorname{log}},s}$ as a coset $r H$ for the algebraic monodromy group $H$, this reduces to showing that for each pair of $r a H^{\circ}, r a' H^{\circ} \subset r H$, with $H^{\circ}$ the identity component of $H$ and $a H^{\circ}, a' H^{\circ}$ cosets of $H/H^{\circ}$, we have that $r a H^{\circ} B_{Q} \subset r a' H^{\circ} B_{Q}$ implies $\overline{r a H^{\circ} B_{Q}}^{\textrm{Zar}} = \overline{r a' H^{\circ} B_{Q}}^{\textrm{Zar}}$. But because the constructible sets $r a' H^{\circ} B_{Q}$ are the fibres of the topologically trivial family $I'$, they are all irreducible of the same dimension.

The desired statement now follows by observing that $C \times B_{Q}$ is the image of a $K$-irreducible component of the fibre above $s$ of $B_{\mathbf{G}'_{\operatorname{log}}} \times B_{Q}$, and taking the image of this component under $v'$ and $w$. 
\end{proof}
\noindent We now define $u = v \circ p$, with $p : B_{\mathbf{J}} \to B_{\mathbf{G}_{\operatorname{log}}}$ the natural projection map. Note that $p$ is surjective, so the image $I$ of \autoref{imIlem} can be identified with the image of $u$.

\paragraph{Product Constructions and $G$-functions:} Apply \autoref{lemmashift} with $\Sigma': = \Sigma (L_G) \subset \Sigma(L) \cup \{ 0 \}$, and take $D$ and $\overline{C} = \overline{C}_{D}$ as in that statement. We define the $N = n^2 + 1$ shifted evaluation maps for $j = 0, \dots, n^2$:
\[ a_j(s) = \frac{s}{1 + jDs} . \]
Evaluating at the origin yields $a_j(0) = 0$ for all $j$. We set $\mathbf{P}_{j}$ to be the pullback of $\mathbf{P}$ by $a_j$. Similarly, we write $\mathbf{G}_{j}$ for the pullback of $\mathbf{G}$ from \S\ref{auxGopsec}, and $\mathbf{J}_j$ for the pullback of $\mathbf{J}$. By mimicking the above constructions, we obtain analogous bundles $B_{\mathbf{P}}(j)$ and $B_{\mathbf{J}}(j)$ with $B_{\mathbf{P}} = B_{\mathbf{P}}(0)$ and $B_{\mathbf{J}} = B_{\mathbf{J}}(0)$. We also write $u_{j} : B_{\mathbf{J}}(j) \times B_{Q} \to \mathbb{P}^1 \times \GL_{n}$ for the corresponding maps; once again if $I(j)$ is the image of $u_{j}$ then $\overline{I(j)}^{\operatorname{Zar}}$ equals $B_{\mathbf{P}}(j)$ and $I(j)$ is a topological fibre bundle away from the singular locus of $a^{*}_{j} L$. Note that our fixed curve $\overline{C}$ coming from \autoref{lemmashift} is the image of the map $a : \PP \to \PP^{N}$ whose components are $a_{j}$.

We define $\mathcal{B}_{\mathbf{J}}$ as the fibered product of the pullback bundles $B_{\mathbf{J}}(j)$ over the base curve $\mathbb{P}^1$:
\[ \mathcal{B}_{\mathbf{J}} = B_{\mathbf{J}}(0) \times_{\mathbb{P}^1} B_{\mathbf{J}}(1) \times_{\mathbb{P}^1} \dots \times_{\mathbb{P}^1} B_{\mathbf{J}}(n^2) \]
As we know from \autoref{lemdefoverK} each $B_{\mathbf{J}}(j)$ is defined over $K$, so is this fibre product. Moreover, the Zariski closure of the graph of the vector-valued function
\begin{equation}
\label{BGgrapheq}
s \mapsto \{ (\mathbf{J}_{j}(s)) \}^{n^2}_{j=0} 
\end{equation}
contains the analytic continuation of this function along all paths in the curve $C$, and $C$ was chosen such that the map $\pi_1(C) \to \pi_1(\mathbb{P}^1 - \Sigma')^{N}$ is surjective, so it follows that this Zariski closure is equal to $\mathcal{B}_{\mathbf{J}}$; note that, since $\mathcal{B}_{\mathbf{J}}$ is defined over $K$, there is no distinction between $K$-Zariski and complex Zariski closures in this case. Since each $B_{\mathbf{J}}(j)$ is a torsor for the algebraic monodromy group of $a_{j}^{*} L_{G}$, the bundle $\mathcal{B}_{\mathbf{J}}$ is a torsor for the product of these groups. We also construct the bundle 
\begin{displaymath}
     \mathcal{B}_{\mathbf{P}} = B_{\mathbf{P}}(0) \times_{\mathbb{P}^1} B_{\mathbf{P}}(1) \times_{\mathbb{P}^1} \dots \times_{\mathbb{P}^1} B_{\mathbf{P}}(n^2) . 
\end{displaymath}
We obtain a map $\mathcal{u} : \mathcal{B}_{\mathbf{J}} \times B_{Q} \to \mathcal{B}_{\mathbf{P}}$. 

Finally, we consider a vector-valued function $\mathbf{H}$ on $A \subset \PP^1(\mathbb{C})$ constructed from the functions $\{ (\mathbf{G}(a_{j}(s)), \log(a_{j}(s))) \}^{n^2}_{j=0}$ as follows. Let $e = e(L)$ is the ramification index of $L$ introduced in \Cref{indexdef}. Recall from the proof of \autoref{removeloglem} that there is a linear combination of entries of $\mathbf{G}_{\operatorname{log}}$ which is an entry of $\mathbf{G}$; call this entry $T$. For each index $1 \leq j \leq n^2$ we consider the function 
\begin{align}
E_{j}(s) &= T(a_{j}(s))^{e} T(s)^{e} \log(s) - T(s)^{e} T(a_{j}(s))^{e} \log(a_{j}(s)) \nonumber \\
&= (T(s) T(a_{j}(s)))^{e} (\log(s) - \log(a_{j}(s))) \nonumber \\
&= (T(s) T(a_{j}(s)))^{e} \log(1 + jDs) \label{eqlog} .
\end{align}
We take the entries of $\mathbf{H}$ to consist of the functions $F^{e}$ for each $F$ that appears in some $\mathbf{G}_{j}$ as well as the functions $E_{j}$ for all $1 \leq j \leq n^2$. These entries are all $G$-functions. We define $\mathcal{B}_{\mathbf{H}}$ as the complex Zariski closure (which again agrees with the $K$-Zariski closure, cf. \autoref{lemdefoverK}) of the graph of $\mathbf{H}$ inside $\mathbb{P}^1 \times \mathbb{A}^{\mu}$, with $\mu$ the number of entries of $\mathbf{H}$. 


Observe that each $G$-function $E_{j}$ is a polynomial in entries of the various $\mathbf{J}_{j}$ as $j$ ranges from $1$ to $n^2$. Indeed, the functions $T(a_{j}(s))$ all appear as solutions of $a^{*}_{j} L$ as a consequence of the first part of the proof of \autoref{removeloglem}, and therefore $T(s) \log(s)$ and $T(a_{j}(s)) \log(a_{j}(s))$ are solutions of $L_{G}$ and $a^{*}_{j} L_{G}$, respectively. Similarly, each $F$ in some $\mathbf{G}_{j}$ also appears in $\mathbf{J}_{j}$. It follows that we have a natural $K$-algebraic map $\mathcal{B}_{\mathbf{J}} \to \mathbb{P}^1 \times \mathbb{A}^{\mu}$ which sends the graph of \eqref{BGgrapheq} to the graph of $\mathbf{H}$, and therefore induces a dominant $K$-algebraic map $\pi : \mathcal{B}_{\mathbf{J}} \to \mathcal{B}_{\mathbf{H}}$. 

Recall that the total spaces $\mathcal{B}_{\mathbf{H}}$ and $\mathcal{B}_{\mathbf{J}}$ are irreducible over $K$. We claim that
\begin{equation}
\label{Hfibdimeq}
\dim \mathcal{B}_{\mathbf{H}} - 1 \leq \dim \mathcal{B}_{\mathbf{J}} .
\end{equation}
It suffices to show that, for a generic point $s$, one can recover all the entries of $\{ \mathbf{J}_{j}(s) \}^{n^2}_{j=0}$ as $K$-algebraic functions of the entries of $\mathbf{H}(s)$ and $\log(s)$. For such a generic $s$ we may assume $E_{j}(s) \neq 0$, and therefore solve for $\log(1+jDs)$ by dividing out $T(s) T(a_{j}(s))$ from the expression (\ref{eqlog}) and using that $T(s)$ and $T(a_{j}(s))$ are entries of $\mathbf{H}(s)$. This in particular means we can solve for $\log(a_{j}(s)) = \log(s) - \log(1+jDs)$. Then it suffices to observe that the entries of $\mathbf{J}_{j}(s)$ are $K$-algebraic polynomials in the entries of $\mathbf{G}_{j}(s)$ and $\log(a_{j}(s))$ by construction.

\subsection{Proof of \Cref{thmwithproof}}

We let $t$ be an integer, and $s = 1/t$. To apply \autoref{thm:bombieri} to polynomial relations on our product system, we argue as follows.

\subsubsection{Global relations and codimension drop} 
\label{globalrelcodimdropsec}

Suppose that for every $j = 0, \hdots, n^2$, $\mathbf{P}(a_{j}(s))$ satisfies a $K$-relation $R_{j}$ of degree $\delta$ which is non-trivial for $\mathbf{P}$ in the sense of \Cref{def:nontrivial}, i.e., if $H_{j}$ is the associated hypersurface in $\mathbb{A}^{n^2}$, then the image of $H_{j}$ in the local ring of $B_{\mathbf{P}}(j)_{s}$ at $\mathbf{P}(s)$ is not nilpotent, and $H_{j}$ is defined by a polynomial of degree $\delta$. Observe that if we consider the relation $u^{*} R_{j} = R_{j} \circ u_{j}$ on $B_{\mathbf{J},a_{j}(s)} \times B_{Q}$, it vanishes at the point $(\mathbf{J}(a_{j}(s)), Q) = (\mathbf{J}_{j}(s), Q)$. 


Next we observe that $u^{*} R_{j}$ is non-zero in the stalk of $B_{\mathbf{J},a_{j}(s)} \times B_{Q}$ at $(\mathbf{J}_{j}(s), Q)$. We observe that for any $K$-irreducible component $C$ of $B_{\mathbf{J},a_{j}(s)}$, the image of $B_{\mathbf{J}, a_{j}(s)} \times B_{Q}$ in $B_{\mathbf{P}, a_{j}(s)}$ is $K$-Zariski dense in some $K$-irreducible component of $B_{\mathbf{P}, a_{j}(s)}$ passing through $\mathbf{P}(s)$. Indeed, the $K$-irreducible (= $K$-connected) components of $B_{\mathbf{J}, a_{j}(s)}$ surject onto those of $B_{\mathbf{G}_{\operatorname{log}}, s}$, so the statement follows from \autoref{imIlem}.

We apply \autoref{deeplemma} with $Y = B_{Q}$, the relations $u^{*} R_{0}, \hdots, u^{*} R_{N}$, and $(E_{1}, E_{2}) = (\{ \mathbf{J}(a_{j}(s)) \}_{0 \leq j \leq N}, Q)$. The lemma then shows that $E_{1}$ lies inside a closed $K$-algebraic subvariety of $\mathcal{B}_{\mathbf{J},s}$ of codimension at least $N - \dim B_{Q}$ and degree bounded in terms of $\delta N$. If we choose $N \geq \dim B_{Q} + 2$, then we can project this subvariety along the map $\pi_{s} : \mathcal{B}_{\mathbf{J},s} \to \mathcal{B}_{\mathbf{H},s}$ to obtain an algebraic subvariety $T_{s}$ of codimension at least $N - \dim B_{Q} - 1$ which vanishes on $\mathbf{H}(s)$.

\begin{lem}
\label{deeplemma}
For each $j = 0, \hdots, n^2$, fix a closed embedding $B_{\mathbf{J}}(j) \subset \mathbb{P}^1 \times \mathbb{A}^{r}$ over $\mathbb{P}^1$, and let $\mathcal{B}_{\mathbf{J}} \subset \mathbb{P}^1 \times \mathbb{A}^{Nr}$ be the associated embedding of $X := \mathcal{B}_{\mathbf{J}}$. Let $Y \subset \mathbb{A}^{n^2}$ be an irreducible affine variety defined over $K$, and write $\pi_{j} : \mathcal{B}_{\mathbf{J}} \to B_{\mathbf{J}}(j)$ for the obvious projections.

Suppose we are given: 
\begin{itemize}
    \item a point $s \in \mathbb{P}^1(K)$;
    \item a point $E = (E_{1}, E_{2}) \in (X_{s} \times Y)(\mathbb{C})$ with $\overline{E_{2}}^{K-\operatorname{Zar}} = Y$; and
    \item $K$-algebraic relations $R_{0}, \hdots, R_{n^2}$, with each $R_{j}$ a relation on $B_{\mathbf{J}}(j)_{s} \times Y$, such that $R_{j}$ vanishes at $(\pi_{j}(E_{1}), E_{2})$ for each $j$ and is not zero in the stalk of $B_{\mathbf{J}}(j)_{s} \times Y$ at $(\pi_{j}(E_{1}), E_{2})$.
\end{itemize}
Then the projection to $X_{s}$ of the vanishing locus $V(\pi_{0}^{*} R_{0}, \hdots, \pi^{*}_{n^2} R_{n^2})$ contains a component of codimension at least $N - \dim Y$ in $X_{s}$, containing the image of $E$, and defined by polynomials whose degrees under the fixed affine embedding are bounded by a polynomial (depending only on the fixed embeddings, and in particular independent of $s$) in the degrees of the $R_{0}, \hdots, R_{n^2}$.
\end{lem}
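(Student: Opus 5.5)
We work in $X_{s} \times Y \subset \mathbb{A}^{Nr} \times \mathbb{A}^{n^2}$ and cut out a subvariety by the pulled-back relations. Set $Z := V(\pi_{0}^{*} R_{0}, \hdots, \pi_{n^2}^{*} R_{n^2}) \subset X_{s} \times Y$, and let $Z_{E}$ be an irreducible component of $Z$ through $E$. The goal is to show that $\operatorname{codim}(Z_E, X_s \times Y) \geq N$, and then to project to $X_s$. The key structural fact is that $X_{s} = B_{\mathbf{J}}(0)_{s} \times \cdots \times B_{\mathbf{J}}(n^2)_{s}$ is a product, and that $\pi_{j}^{*} R_{j}$ depends only on the $j$-th factor and on $Y$. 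The relations therefore live on ``different coordinates'' except for the shared factor $Y$. To exploit this, I would first pass to the fibre over the fixed point $E_{2}$. More precisely, I work $K$-algebraically with the generic point of $Y$: since $\overline{E_{2}}^{K\text{-Zar}} = Y$, the point $E_{2}$ behaves like a generic point of $Y$ over $K$.

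\textbf{Step 1 (fibrewise codimension).} For each $j$, consider $R_{j}$ as a family of relations on $B_{\mathbf{J}}(j)_{s}$ parametrized by $y \in Y$. I claim $R_{j}(\cdot, E_{2})$ does not vanish on the germ of $B_{\mathbf{J}}(j)_{s}$ at $\pi_{j}(E_{1})$. Suppose it did vanish on a component $W$ of $B_{\mathbf{J}}(j)_s$ through $\pi_j(E_1)$. Since $W$ is $K$-defined (components over $K$) and $E_{2}$ is $K$-generic in $Y$, vanishing at $E_2$ is a $K$-closed condition on $y$. It would therefore hold for all $y \in Y$, so $R_{j}$ would vanish on $W \times Y$. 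This contradicts the hypothesis that $R_j$ is nonzero in the stalk at $(\pi_j(E_1), E_2)$. This genericity argument is the step I expect to be the main obstacle. One must be careful that ``stalk'' nonvanishing is taken with respect to $K$-irreducible components, and that the $K$-components of $B_{\mathbf{J}}(j)_s \times Y$ through the point are products $W \times Y$. I would handle this by working over the function field $K(Y)$ and using that $E_{2}$ induces a $K$-embedding $K(Y) \hookrightarrow \mathbb{C}$.

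\textbf{Step 2 (product and projection).} On the fibre $X_{s} \times \{E_{2}\}$, each $R_{j}(\cdot,E_2)$ cuts every component of the $j$-th factor through $\pi_j(E_1)$ in codimension exactly one. Since the factors are independent, the product of these hypersurface sections has codimension $N$ in $X_{s}$ near $E_{1}$. Working over $K$ (equivalently over $K(Y)$), this shows that every component $Z_E$ of $Z$ through $E$ has generic fibre over $Y$ of codimension $\geq N$ in $X_s$. Hence $\operatorname{codim}(Z_E, X_s \times Y) \geq N$. Projecting to $X_{s}$ loses at most $\dim Y$ dimensions, so the $K$-Zariski closure of the projection of $Z_E$ has codimension $\geq N - \dim Y$ and contains $E_{1}$.

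\textbf{Step 3 (degree control).} $Z$ is cut out inside $X_{s} \times Y$ by equations of degree $\deg R_{j}$ together with the fixed equations of $\mathcal{B}_{\mathbf{J}}$ and $Y$. The fibre $X_s$ is the fibre of a fixed family over $\mathbb{P}^1$, so it is defined in degrees independent of $s$. By B\'ezout (in its refined form for irreducible components), the component $Z_E$ has degree polynomially bounded in the $\deg R_{j}$. The projection of a variety has degree at most the degree of the variety. Finally, a variety of degree $d$ is set-theoretically cut out by polynomials of degree $\leq d$, for instance via Chow forms or generic projections. This gives defining equations of degree bounded polynomially in the $\deg R_j$, with constants depending only on the fixed embeddings.
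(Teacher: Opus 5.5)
Your proof is correct and is essentially the paper's argument. Your Step 1 is exactly the paper's claim that $R_j \notin (y_1,\dots,y_\mu)$ in the formal germ ring at $E$, which also follows from the $K$-genericity of $E_2$; the paper's inductive regular-sequence argument via \autoref{lemmacodim} is a local-algebra version of your product-of-hypersurface-sections count on the fibre over $E_2$, and both then lose at most $\dim Y$ under projection and bound degrees by elimination theory. One input you should state explicitly, as the paper does, is that each $B_{\mathbf{J}}(j)_s$ is smooth (a torsor fibre), so its complex germ at $\pi_j(E_1)$ is irreducible: this is what lets your $K(Y)$-argument in Step 1 pass from nonvanishing in the $K$-stalk to nonvanishing on the unique complex component through $\pi_j(E_1)$, which Step 2 requires.
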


\begin{proof}
From now on we write $R_{i} = \pi^{*}_{i} R_{i}$. We claim that the locus $V(R_{0}, \hdots, R_{n^2})$ has a component containing $E = (E_{1}, E_{2})$ and which has codimension $N$. As explained in the paragraph containing \eqref{BGgrapheq}, $\mathcal{B}_{\mathbf{J}}$ is a torsor, so it follows that $\mathcal{B}_{\mathbf{J},s}$ is a smooth variety, and hence locally irreducible. To compute the dimension of $$V(R_{0}, \hdots, R_{n^2})$$ at $E$ one can work inside the ring of formal germs of $X_{s} \times Y$ at $E$, which is naturally the quotient of a ring of the form
\begin{displaymath}
  M:=   \mathbb{C}[[\{ v^{i}_{0} \}^{r}_{i=1}, \hdots, \{ v^{i}_{n^2} \}^{r}_{i=1}, y_{1}, \hdots, y_{\mu} ]]
\end{displaymath}
where the $\{ v^{i}_{j} \}^{r}_{i=1}$ are coordinates at $\pi_{i}(E)$ on the affine space $\mathbb{A}^{r}$ containing $\mathcal{B}_{\mathbf{J}}(j)_{s}$, and the $y_{1}, \hdots, y_{\mu}$ are formal coordinates for $Y$ at $E_{2}$. In this ring, each $R_{j}$ takes the form $R_{j}(\{ v^{i}_{j} \}^{r}_{i=1}, y_{1}, \hdots, y_{\mu})$, and we claim that $R_j$ does not lie in the ideal $(y_1,\dots, y_\mu) \subset M$. Indeed, if this were true, it would already be true in the global affine coordinate ring of $X_s\times Y$, but, since the relations $R_j$ are defined over $K$, this would contradict the fact that the $K$-Zariski closure of $E_{2}$ is $Y$.

Recall that each $R_j$ is a $K$-algebraic relation on $B_{\mathbf{J}}(j)_{s} \times Y$ vanishing at $(\pi_j(E_1),E_2)$ and not zero in the stalk of $B_{\mathbf{J}}(j)_{s} \times Y$ at $(\pi_{j}(E_{1}), E_{2})$.  It then follows by iteratively applying \Cref{lemmacodim} below that $(R_{0}, \hdots, R_{n^2})$ is a regular sequence. Indeed, we can argue by induction as follows. The base case is given by considering the relation $R_0$ on $\mathcal{B}_{\mathbf{J}, {s}}\times B_Q$ which is not identically zero, and so the associated $V(R_0)$ has codimension one in a neighbourhood of $(E_1, E_2)$. Let $C$ be the formal stalk at $\pi_j(E_1)$ of $B_{\mathbf{J}}(j)_{s}$. Let $B$ be the formal stalk at $\mathfrak{m}=(\pi_i(E_1)_{i\neq j},E_2)$ of 
\begin{displaymath}
V ((R_i)_{i< j} ) \subset  B_Q \times \prod_{i\neq j} B_{\mathbf{J}}(i)_{s} .
\end{displaymath}
 By the inductive assumption, $B$ is of pure dimension and  $R_j$ is not identically zero on $\{ \mathfrak{m} \} \times C$. \Cref{lemmacodim} then shows that $V(R)$ has codimension one in $B\times C$. Since $(R_{0}, \hdots, R_{n^2})$ is a regular sequence, the ideal cut out by these functions defines a locus of codimension at least $N$ in some neighborhood of $(E_1,E_2)$ in $X_s\times Y$. It follows that its projection to $X_{s}$ has local codimension at least $N - \dim Y$. 

The claim on the degrees is an easy consequence of elimination theory.
\end{proof}

\begin{lem}\label{lemmacodim}
    Let $(A,\mathfrak{m})$ be a local ring (which is a $\C$-algebra) with spectrum $B$, and $C$ be the formal spectrum of an irreducible formal power series ring over $\C$. Let $R$ be a non-zero element of the coordinate ring of $B\times C$ such that $\restr{R}{\mathfrak{m}\times C}$ is not identically zero. If $B$ has pure dimension, then $H=V(R)$ has codimension one in $B\times C$. 
\end{lem}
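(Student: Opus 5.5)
The plan is to reduce to Krull's principal ideal theorem together with a dimension count on the product. Since $H = V(R)$ is cut out by a single element, every irreducible component of $H$ has codimension at most one in $B \times C$. It therefore suffices to show two things: that $R$ is not a zero divisor on $B \times C$, so no component of $H$ is a component of $B \times C$, and that $H$ is nonempty near the relevant point (the latter only matters when $R$ is a non-unit there). Because $B$ has pure dimension and $C$ is the formal spectrum of an irreducible power series ring, $B \times C$ (more precisely the completed tensor product $\widehat{A} \widehat{\otimes}_{\C} \C[[w_1,\dots,w_k]]$) is equidimensional. Its irreducible components are exactly the products $B_i \times C$, where $B_i$ runs over the irreducible components of $B$; this uses that $C$ is geometrically irreducible over $\C$.

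So the key step is to show that $R$ does not vanish identically on any $B_i \times C$. Here the hypothesis that $\restr{R}{\mathfrak{m} \times C}$ is not identically zero enters. Each $B_i$ passes through the closed point $\mathfrak{m}$, since $A$ is local. Hence $\{\mathfrak{m}\} \times C \subset B_i \times C$ for every $i$. If $R$ vanished on $B_i \times C$, it would therefore vanish on $\{\mathfrak{m}\} \times C$, a contradiction. Consequently $R$ is nonzero on every component. If the ring is not reduced, I would pass to the reduced structure, which does not change dimensions, so vanishing can be interpreted set-theoretically.

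Now apply Krull's Hauptidealsatz in the Noetherian local ring of $B \times C$ at $(\mathfrak{m}, 0)$. Every minimal prime over $(R)$ has height at most one. Since $R$ avoids all minimal primes of the ring, these minimal primes have height exactly one. Because the ring is equidimensional and catenary (it is a quotient of a regular complete local ring, hence excellent), height one translates into codimension one, which shows that $V(R)$ has codimension one in $B \times C$.

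I expect the main technical point to be making the product $B \times C$ rigorous as a Noetherian, equidimensional, catenary local scheme whose components are the $B_i \times C$. My first approach would be to replace $A$ by its completion, which preserves pure dimension for the formal germs considered in \Cref{deeplemma}. One then works with $\widehat{A}[[w_1,\dots,w_k]]$, where both properties are standard: power series over a complete local ring remain complete local and excellent, and the minimal primes of $\widehat{A}[[w]]$ are extended from the minimal primes of $\widehat{A}$.
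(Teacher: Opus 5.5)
Your proposal is correct and follows the paper's argument: in both, the key point is that every irreducible component $B_i$ of $B$ passes through the closed point $\mathfrak{m}$, so $R$ cannot vanish identically on any $B_i\times C$ and therefore cuts each component in codimension one. You additionally spell out the commutative-algebra details that the paper's two-line proof leaves implicit: Krull's principal ideal theorem, and the fact that the minimal primes of $\widehat{A}[[w_1,\dots,w_k]]$ are extended from those of $\widehat{A}$.
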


\begin{proof}
  Let $B'$ be any component of $B$. Observe that $B'$ necessarily passes through $\mathfrak{m}$ and so $\restr{R}{B'\times C}$ is not identically zero. In particular, the intersection between $H$ and each $B'\times C$ has codimension one, so the result follows.
\end{proof}

\subsubsection{Height bound for $t$ via Bombieri and Andr\'e}
\label{heightboundsec}

We apply \autoref{thm:bombieri} with $G_{1}, \hdots, G_{\mu}$ the entries of $\mathbf{H}$, $v : K \hookrightarrow \mathbb{C}$ a fixed place of $K$, and take the $\delta$ of \autoref{thm:bombieri} to be an upper bound $\rho$ on the size of the lowest degree non-zero element of the ideal defining $T_{s}$, which by our construction is bounded in terms of the $\delta$ of \autoref{thmwithproof}. We observe that the statement of \autoref{thm:bombieri} implies that there exists some constant $\kappa$ such that when $1/s = t > \kappa$, a $K$-algebraic relation of degree $\rho$ on $\mathbf{H}(s)$ cannot exist. Indeed taking $\kappa$ larger than the exponential of the right-hand side of \eqref{bomb1}, we know \eqref{bomb1} holds with $\xi = s = 1/t$, since $h(1/t) = \log t$.\footnote{This is the crucial point where the assumption that $t$ is an integer is used; otherwise to apply the Andr\'e-Bombieri method one would need to additionally produce relations at non-archimedean places.} The expression on the right-hand side of \eqref{bomb2} is of the form $\exp(- r(t) \log t)$ where $r(t)$ is an increasing function of $t$ that tends to $\infty$. In particular, the right-hand side is eventually smaller than $c t$ for any constant $c > 0$, and therefore smaller than the left-hand side for $t \gg 0$. Taking $\kappa$ large enough, we conclude that the inequalities \eqref{bomb1} and \eqref{bomb2} cannot hold simultaneously for $t > \kappa$, and therefore $\mathbf{H}(t)$ does not admit a non-trivial $K$-algebraic relation of degree at most $\rho$.

Returning to the original problem, our analysis shows that not all of the entries
\[ \mathbf{P}(t), \mathbf{P}(a_{1}(t)), \hdots, \mathbf{P}(a_{n^2}(t)) \]
satisfy a non-trivial $K$-algebraic relation of degree $\delta$. In particular using the definition of $a_{j}$, one recovers both (1) and (2) in \autoref{thmwithproof}.

\subsubsection{Proof of \Cref{thmwithproof}(3)}

The structure of the proof for \Cref{thmwithproof}(3) is analogous to that of \Cref{thmwithproof}(2), with the following modifications. Instead of taking $N = n^2 + 1$, we take $N = n^2 + \nu$. Then instead of assuming at the beginning of \S\ref{globalrelcodimdropsec} that the values $\mathbf{P}(a_{j}(s))$ satisfy a non-trivial relation $R_{j}$ for all $j$, we assume this is true for those $j$ in a subset $J$ of $\{ 0, \hdots, N-1 \}$ of size $n^2 + 1$. Carrying out the argument for this subset, one obtains a constant $\kappa_{J}$ such that for $t > \kappa_{J}$ at least one of the points $\{ [t + j D : 1] : j \in J \}$ lies in $\mathcal{R}_{\mathbf{P},\delta}$. Doing this for all possible choices of $J$ and taking $\kappa = \textrm{max}_{J}\{ \kappa_{J}\}$ we then obtain (3) via the pigeonhole principle.

\section{Hodge-theoretic recollections}\label{sec5}
We recall some facts about variations of Hodge structures, as studied by Griffiths \cite{Griffiths}, Deligne \cite{MR0498551}, Steenbrink and Zucker \cite{zbMATH04017061}, and many others.

\subsection{Variations of (mixed) $\mathbb{Z}$-Hodge structures and period maps}
\label{vhsandpermapsec}

Let $S$ be a smooth (irreducible) complex variety and denote its (complex) analytification by $S^{\an}=S(\C)$. A \emph{graded-polarizable variation of (mixed) $\Z$-Hodge structures} ($\Z$VHS) on $S$ is the data of $$\V:=(\V_\Q, \mathbb{W}_{\bullet}, \mathcal{V}, F^\bullet, \nabla),$$ where:
\begin{itemize}
    \item $\V_\Q$ is a finite rank locally free $\Q_{S^{\an}}$-local system on $S^{\an}$ admitting an integral structure $\V_{\Z} \subset \V_{\Q}$,
    \item $\mathbb{W}_{\bullet}$ is an increasing filtration of $\V_{\mathbb{Q}}$ by local subsystems (called the \emph{weight filtration}),
    \item $(\mathcal{V}, F^\bullet, \nabla)$ is a filtered regular algebraic flat connection on $S(\C)$ such that the analytification $(\mathcal{V}^{\an}, \nabla^{\an})$ is isomorphic to $\V_\mathbb{C} \otimes_{\Q_{S^{\an}}} \mathcal{O}_{S^{{\an}}}$ endowed
with the holomorphic flat connection defined by $\V_\Q$ (the
filtration $F^\bullet$ is called the \emph{Hodge filtration}),
\end{itemize}
such that the fibre of the tuple $(\V_\Z, \mathbb{W}_{\bullet}, \mathcal{V}, F^\bullet, \nabla)$ above each point $s \in S(\mathbb{C})$ is a graded-polarizable integral mixed Hodge structure (see e.g. \cite[\S 2]{MR1154159} for a definition). We will only consider $\Z$VHS that also satisfy two extra conditions. The first, \emph{Griffiths transversality}, was formulated by Griffiths \cite{Griffiths}, and applies to variations of pure Hodge structure obtained by passing to graded quotients with respect to $\mathbb{W}_{\bullet}$. The second, \emph{admissibility}, was formulated by Steenbrink and Zucker \cite{zbMATH04017061}. 

Let us fix an integral structure $\mathbb{V}_{\mathbb{Z}} \subset \mathbb{V}_{\mathbb{Q}}$. Associated to $\V$ and this integral structure there is a \emph{period map}, constructed as follows -- see also \cite[Sec. 4]{2021arXiv210708838B} for the pure case and \cite[Sec. 4]{2024arXiv240616628B} for the mixed one. Fix a graded-polarized lattice $(V, W_{\bullet}, Q)$ of the same type as the fibres of $\V$; for instance we could take $(V, W_{\bullet}, Q) = (\V_{\mathbb{Q}, s_{0}}, \mathbb{W}_{\bullet,s_{0}}, \mathcal{Q}_{s_{0}})$ for some point $s_{0} \in S(\mathbb{C})$. Let $D$ be the space of all graded-polarized Hodge structures on $(V, W_{\bullet}, Q)$ with the same Hodge numbers as those parameterized by $\V$. Let $\Gamma = \Aut(V, W_{\bullet}, Q)(\mathbb{Z})$ be the symmetry group of $(V, W_{\bullet}, Q)$. Then $\Gamma$ acts on $D$ and the quotient $\Gamma \backslash D$ is a complex orbifold with uniformization map $\pi : D \to \Gamma \backslash D$. We have a natural complex analytic map
\[ \varphi : S(\C) \to \Gamma \backslash D, \hspace{2em} s \mapsto (\V_{s}, \mathbb{W}_{\bullet,s}, F^{\bullet}_{s}, \mathcal{Q}_{s}) \]
where $F^{\bullet}_{s}$ denotes the Hodge flag at $s$. For later use we also note that the complex manifold $D$ embeds naturally into a flag variety $\ch{D}$ of Hodge flags on $V$ satisfying the first Hodge-Riemann bilinear relation on the associated graded $\textrm{gr}^{W} V$; see for instance \cite[pg.48]{zbMATH06031035} for a definition.

The map $\varphi$ admits a local description as follows. Let $(\mathcal{V}, F^{\bullet}, \nabla)$ be the algebraic filtered vector bundle with Gauss-Manin connection associated to $\V$. Let $B \subset S(\mathbb{C})$ be an analytic open subset on which $\V$ is trivial.

\begin{defn}
A \emph{filtration-compatible frame} $v_{1}, \hdots, v_{n}$ for $\mathcal{V}$ over an analytic or algebraic open subset $B$ of $S$ is a frame of $\restr{\mathcal{V}}{B}$ such that each filtered piece of $\restr{F^{\bullet}}{B}$ is spanned by a sequential subset of $\{ v_{1}, \hdots, v_{n} \}$; i.e., each $F^{i}$ is spanned by $v_{1}, \hdots, v_{n_{i}}$ for some $0 \leq n_{i} \leq n$. 
\end{defn}

\noindent Given a filtration-compatible frame $v_{1}, \hdots, v_{n}$ for $\restr{\mathcal{V}}{B}$, and a basis $b_{1}, \hdots, b_{n}$ for $\V(B)$, we can consider a varying change-of-basis matrix $\mathbf{P} : B \to \GL_{n}(\mathbb{C})$ such that $\mathbf{P}(s)$ gives the coordinates of $v_{1,s}, \hdots, v_{n,s}$ in the basis $b_{1}, \hdots, b_{n}$. By fixing a polarization-compatible identification $\iota$ between $\V(B)$ and the graded-polarized lattice $(V, W_{\bullet}, Q)$, we obtain a map 
\begin{equation}
\label{psidef}
\psi : B \to D \subset \{ \textrm{graded-polarized Hodge flags on }(V, W_{\bullet}, Q) \}, \hspace{2em} s \mapsto \iota(\mathbf{P}(s))
\end{equation}
where the matrix $\iota(\mathbf{P}(s))$ is interpreted as representing a flag on $V$. The maps $\psi$ are called \emph{local lifts} of the period map $\varphi$ and are independent of the choices modulo $\Gamma$.

\subsection{The Hodge locus}

Associated to every mixed $\Q$-Hodge structure $V$, there is a $\mathbb{Q}$-algebraic group $\MT(V)$ called its Mumford-Tate group. For a point $s \in S(\mathbb{C})$, we write $\mathbf{G}_s=\MT(\V_s)$ for the Mumford-Tate group of the Hodge structure above $s$. Note that $\mathbf{G}_s$ is constant (preserved by flat translation) away from a countable union of closed analytic subvarieties of $S$ (cf. \cite[Lem. 4]{MR1154159}). We call the associated abstract group the \emph{generic Mumford-Tate group}, and denote it $\mathbf{G}=\MT(\V)$, cf. also \cite[Def. 4.4]{2024arXiv240616628B}. For general properties of the Hodge locus we refer to \cite[\S4.2]{2024arXiv240616628B}.
\begin{defn}
Let $\V$ be a $\Z$VHS on $S$. The \emph{(tensorial) Hodge locus} of $\V$, $\HL(S,\V^\otimes) \subset S(\mathbb{C})$, is the locus of $s \in S(\mathbb{C})$ where $\mathbf{G}_s \neq \mathbf{G}$.
\end{defn}

\begin{defn}\label{hodgegen}
 A point $s\in S(\C)$ is called \emph{Hodge generic} if $\mathbf{G}_s \neq \mathbf{G}$. An irreducible subvariety $Y\subset S$ is called \emph{Hodge generic} if it contains a Hodge generic point.
\end{defn}

We now explain how one can reinterpret $\HL(S,\V^\otimes)$ using the period map $\varphi$. For each Mumford-Tate group $\mathbf{G}_{h}$ associated to some Hodge structure $h \in D$, the complex manifold $D$ contains a closed complex submanifold $D_{\mathbf{G}_{h}} \subset D$ consisting of all those Hodge structures with Mumford-Tate group contained in $\mathbf{G}_{h}$. If $\mathbf{G} = \mathbf{G}_{h}$ then one observes by construction that the image of $\varphi$ lies in the image of $D_{\mathbf{G}}$ in $\Gamma \backslash D$.

It is immediate from the definitions that $\HL(S, \V^{\otimes})$ is equal to the union
\[ \bigcup_{\substack{h \in D_{\mathbf{G}} \\ \mathbf{G}_h \subsetneq \mathbf{G}}} \varphi^{-1}(\pi(D_{\mathbf{G}_h})) . \]
For an equivalent description of the Hodge locus in terms of Mumford-Tate subgroups, rather than period domains, we refer to \cite[Sec. 1]{2021arXiv210708838B} and \cite[Sec. 4]{2024arXiv240616628B}.

\subsection{Preliminary Hodge-theoretic lemmas}\label{lemmas}
We collect some lemmas that we will use to prove the existence of algebraic Hodge generic points. Let $\V$ be a (mixed) $\Z$VHS as in \Cref{vhsandpermapsec} that satisfies the Griffiths transversality and the admissibility condition. These conditions are needed to invoke the \emph{theorem of the fixed part} of Deligne \cite[Sec. 4.1]{MR0498551}, Griffiths-Schmid \cite{sch73}, and Steenbrink-Zucker \cite{zbMATH04017061}, which is used multiple times (sometimes implicitly). For instance, these assumptions appear in \cite[Thm. 1]{MR1154159} where an admissible $\mathbb{Z}$VHS is called \emph{good}. 

First, we introduce the following:
\begin{defn}\label{mondef}
Let $S$ be a smooth (irreducible) complex variety and $s_0\in S(\C)$. The \emph{algebraic monodromy} of a $\Z$VHS $\V$ on $S$ is the connected component of the identity of the $\Q$-Zariski closure of the image of the monodromy representation 
\begin{displaymath}
    \pi_1(S(\C),s_0)\to \GL(\V_{s_0})
\end{displaymath}
associated to the underlying local system $\V_\Q$.
\end{defn}
We denote the algebraic monodromy group by $\mathbf{M}=\mathbf{M}(\V)$, or sometimes $\mathbf{M}_{s_0}$ and recall here that it is a $\mathbb{Q}$-normal subgroup of the derived subgroup of the generic Mumford-Tate group of $\V$, as established in \cite[Thm. 1]{MR1154159}.

\begin{lem}
\label{genRlem}
Let $K \subset \mathbb{C}$ be a field, and let $\V$ be a $\mathbb{Z}$VHS on a $K$-algebraic open subvariety $U \subset \mathbb{A}^{\ell}$. Fix a point $p \in U(K)$. Then there exists a $K$-rational curve $R$ passing through $p$ which is Hodge generic for $\V$.
\end{lem}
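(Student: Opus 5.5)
We need a $K$-rational curve through $p$ that contains a Hodge generic point, i.e. a point outside $\HL(U,\V^\otimes)$. The plan is to use a \emph{very general} point together with the observation that lines through $p$ defined over $K$ are dense among all lines through $p$.

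We first note that $\HL(U,\V^\otimes)$ is a countable union of closed analytic subvarieties $Z_i \subsetneq U(\C)$. By Cattani--Deligne--Kaplan these are even algebraic, though countability and properness suffice here. Each $Z_i$ is proper, since by definition the Mumford--Tate group is generic away from a countable union of proper closed analytic subsets (cf. \cite[Lem. 4]{MR1154159}). The same holds in the mixed admissible setting.

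Next we consider the family of lines through $p$ in $\mathbb{A}^\ell$, parametrized by directions $w \in \mathbb{P}^{\ell-1}$. Set $L_w = \{p + \lambda w\}$ and $R_w = L_w \cap U$. A line $R_w$ is \emph{not} Hodge generic exactly when $R_w(\C) \subset \HL(U,\V^\otimes)$. Since $R_w(\C)$ is an irreducible curve and the $Z_i$ are countably many closed analytic sets, this happens iff $R_w(\C) \subset Z_i$ for a single $i$, by a Baire category argument on the irreducible curve. Fix $i$. The set $W_i$ of directions $w$ with $R_w \subset Z_i$ is a closed analytic subset of $\mathbb{P}^{\ell-1}(\C)$; if $Z_i$ is algebraic it is even algebraic. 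Moreover $W_i$ is proper: otherwise every line through $p$ would lie in $Z_i$, so $Z_i \supset U$, contradicting properness.

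Hence the bad directions form a countable union $\bigcup_i W_i$ of proper closed analytic subsets of $\mathbb{P}^{\ell-1}(\C)$. The main obstacle is to find a $K$-rational direction outside this union, because $\mathbb{P}^{\ell-1}(K)$ is countable and could in principle be swallowed by countably many proper subvarieties.

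To overcome this we first use algebraicity: by Cattani--Deligne--Kaplan (and its mixed version), each $W_i$ is algebraic. It may, however, only be defined over $\C$. Suppose $\mathbb{P}^{\ell-1}(K) \subset \bigcup_i W_i$. Since $K$ is infinite, $\mathbb{P}^{\ell-1}(K)$ is Zariski dense and is not contained in a finite union of proper subvarieties, but a countable union could still contain it. To handle this we avoid insisting on lines and use $K$-rational curves of higher degree. Choose the curve
\[
\lambda \mapsto p + \lambda w_1 + \lambda^2 w_2 + \cdots + \lambda^k w_k, \qquad w_j \in K^\ell,
\]
which is $K$-rational and passes through $p$. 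Instead of taking a very general direction, we take a very general \emph{point} $q \in U(\C)\setminus \HL$ that is very close to a $K$-point. It then suffices to show that some $K$-rational curve through $p$ meets the nonempty open complement. This is automatic once we argue pointwise: pick any Hodge generic $q \in U(\C)$. The Hodge generic locus is the complement of a countable union of proper closed analytic sets, so it is dense, and in particular it meets $U(\C)\setminus\bigcup_i Z_i$ in an open-dense-in-Baire sense.

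The key simplification is that the Hodge generic locus is itself Baire-generic, hence dense, while the union of $K$-rational lines through $p$ is analytically dense in $\mathbb{A}^\ell(\C)$, since $K$-directions are dense in $\mathbb{P}^{\ell-1}(\C)$ when $K \subset \C$ is dense. If $K$ is not dense in $\C$ (for instance $K = \Q$, whose closure is $\mathbb{R}$), we use real-dense directions. The non-Hodge-generic lines are those with direction in $\bigcup_i W_i$, and each $W_i \cap \mathbb{P}^{\ell-1}(\mathbb{R})$ is a proper real-analytic subset, since a complex algebraic proper subset cannot contain the Zariski-dense set $\mathbb{P}^{\ell-1}(\mathbb{R})$. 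Hence each such intersection is nowhere dense in $\mathbb{P}^{\ell-1}(\mathbb{R})$. The complement of the countable union is still comeager, but $K$-points are countable, so Baire alone does not produce a $K$-point.

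This is the genuine difficulty, and the intended resolution is likely a separate argument, for instance via \Cref{thmwithproof} applied to a pencil, or via André's theorem that the generic Mumford--Tate group is attained on a $K$-curve through $p$ when one bounds the degree of the special subvarieties. I would try to show that only finitely many $W_i$ are relevant: a line contained in the Hodge locus lies in a \emph{maximal} special subvariety, and one would use finiteness of the components of the Hodge locus of positive period dimension (Bakker--Klingler--Tsimerman / Baldi--Klingler--Ullmo). With finitely many relevant $W_i$, Zariski density of $\mathbb{P}^{\ell-1}(K)$ finishes the proof.
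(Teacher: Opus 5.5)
\textbf{There is a genuine gap: the proposal stops before a proof, and the fix it suggests does not work in general.}

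After your Baire-category reduction you correctly see the obstruction. $\mathbb{P}^{\ell-1}(K)$ is countable, so it could a priori be covered by the countably many proper algebraic sets $W_i$. You then propose to show that only finitely many $W_i$ are relevant, using finiteness of the components of the Hodge locus of positive period dimension. That finiteness is false in general. Baldi--Klingler--Ullmo prove finiteness only for the \emph{atypical} part, and for the whole positive-dimensional locus only when the level is $\geq 3$. In level $\leq 2$ the typical Hodge locus is, when non-empty, an analytically dense union of infinitely many special subvarieties. Examples are Hecke translates of $\mathcal{A}_1\times\mathcal{A}_{g-1}$ in $\mathcal{A}_g$, or Noether--Lefschetz loci for K3 surfaces. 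If $p$ lies in the Hodge locus, nothing in your argument prevents infinitely many of them from containing lines through $p$. Appealing to \autoref{thmwithproof} does not help either. That theorem works on a curve already chosen, and this lemma is exactly what produces the Hodge generic curve it is applied to.

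The missing idea is to replace the condition ``$R_w$ is Hodge generic'' with the stronger condition that $\pi_1(R_w,p)\to\pi_1(U,p)$ is surjective. The first is a countable intersection of Zariski-open conditions on $w$; the second is a \emph{single} Zariski-open condition. A Lefschetz/Zariski-type theorem for lines through a fixed point supplies it. Concretely, blow up $p$: lines through $p$ become the fibres of a $\mathbb{P}^1$-fibration over $\mathbb{P}^{\ell-1}$, and the exceptional divisor is a section that gets contracted to $p$. This gives a non-empty $K$-Zariski open $T^\circ\subset\mathbb{P}^{\ell-1}$ of good directions, and $T^\circ(K)\neq\emptyset$ because $K$ is infinite.

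One then shows that equal algebraic monodromy forces equal generic Mumford--Tate groups:
\begin{enumerate}
\item By Chevalley, $\MT(\V|_R)$ is the stabilizer of a line of Hodge tensors.
\item That line is preserved by $\mathbf{M}_R\subset \MT(\V|_R)$, hence by $\mathbf{M}_R=\mathbf{M}_U$.
\item The theorem of the fixed part extends it, over a finite cover, to a flat sub-bundle on $U$ whose sections are Hodge everywhere.
\item Hence the line is stabilized by $\MT(\V)$, which gives $\MT(\V)\subset\MT(\V|_R)$. The reverse inclusion is automatic.
\end{enumerate}
This is the argument the paper gives.
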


\begin{proof}
Let $g : \mathcal{R} \to T$ be the family of all curves in $U$ obtained as intersections with $U$ of one-dimensional affine linear subspaces of $\mathbb{A}^{\ell}$ passing through $p$. By the Lefschetz hyperplane theorem, there is a non-empty $K$-algebraic open subset $T^{\circ} \subset T$ such that, for each fibre $g^{-1}(t)$ with $t \in T^{\circ}$, the map $\pi_1(g^{-1}(t), p) \to \pi_1(U, p)$ is surjective. Let $t\in T^{\circ}(K)$ and set $R=g^{-1}(t)$. By construction $C$ has the same algebraic monodromy of $U$. We claim that this implies that they have the same generic Mumford--Tate group. By a theorem of Chevalley (see \cite[Prop. 3.1]{zbMATH03853242}) every algebraic group is the stabilizer of a line in some faithful linear algebraic representation. Recall also that the algebraic monodromy group $\mathbf{M}_{R}$ of $\restr{\V}{R}$ is a subgroup of the Mumford-Tate group $\mathbf{G}_{R}=\MT(\restr{\V}{R})$ of $\restr{\V}{R}$ (cf. \cite[Thm. 1]{MR1154159} and the theorem of the fixed part). Let $L$ be a rational line (in some tensorial representation) fixed by $\mathbf{G}_R$. Notice that $L$ is invariant under the algebraic monodromy of $\mathbf{M}_R$. By construction of $R$, it follows that $L$ is fixed by the algebraic monodromy of $U$. The theorem of the fixed part implies that $L$ extends (up to passing to a finite covering) to a flat subbundle of the corresponding bundle used to construct $L$, and sections of this subbundle are moreover Hodge everywhere on $U$. In particular, $L$ is fixed by the generic Mumford-Tate group $\MT(\V)$ of $U$ and so $\mathbf{G}_R= \MT(\V)$, as desired.

\end{proof}

The next lemmas are needed to control the representations needed to define Mumford-Tate subgroups associated to a fixed graded-polarized lattice $(V, W_{\bullet}, Q)$.

\begin{lem}
\label{MTgroupfamlem}
Let $(V, W_{\bullet}, Q)$ be a graded-polarized lattice, and let $\mathcal{S} = \{ h^{p,q} \}_{p, q \geq 0}$ be a set of Hodge numbers. Then there are finitely many $\mathbb{Q}$-algebraic families of Mumford-Tate subgroups of $V$ associated to graded-polarized mixed Hodge structures with Hodge numbers those of $\mathcal{S}$. In particular, letting $g : \mathcal{F} \to \mathcal{Y}$ be one of these families, each fibre of $g$ is an algebraic subgroup of $\GL(V)$ preserving $W_{\bullet}$, and all Mumford-Tate subgroups associated to $(V, W_{\bullet}, Q, \mathcal{S})$ appear as a $\mathbb{Q}$-fibre of some such $g$.
\end{lem}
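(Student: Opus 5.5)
The plan is to reduce the statement to a finiteness result for conjugacy classes of subgroups, with algebraic families supplying the continuous parameters. First I would bound the representations needed to cut out Mumford--Tate groups. Each Mumford--Tate group $\mathbf{G}_h$ of a mixed Hodge structure $h$ on $(V,W_\bullet,Q)$ with Hodge numbers $\mathcal{S}$ is, by definition, the stabilizer of the Hodge tensors of $h$. By Chevalley's theorem (as in \cite[Prop. 3.1]{zbMATH03853242}), it is therefore the stabilizer of a line in some tensor construction $T^{a,b} = V^{\otimes a}\otimes V^{\vee\otimes b}$. The key input is that $(a,b)$ can be bounded uniformly in terms of $\dim V$ and $\mathcal{S}$. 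For this I would use that Mumford--Tate groups are reductive modulo their unipotent radical, which preserves $W_\bullet$. For reductive subgroups of $\GL(V)$ there are only finitely many $\GL(V)(\overline{\mathbb{Q}})$-conjugacy classes (a classical finiteness result, e.g.\ via Richardson's rigidity). For the unipotent part, it is contained in the fixed unipotent radical $W_{-1}\operatorname{End}(V)$ of the weight-preserving group. Together these give a uniform bound on $(a,b)$ (compare \cite[Sec. 4]{2024arXiv240616628B} for the mixed setting).

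Next I would construct the families. For each of the finitely many tensor spaces $T = T^{a,b}$ in the range allowed by the bound, I would consider the Hilbert scheme, or more simply the Grassmannians $\operatorname{Gr}(k,T)$ for all $k$. A subgroup fixing a collection of tensors is the stabilizer of the subspace they span pointwise. The locus of subspaces $\Lambda \subset T$ is a $\mathbb{Q}$-variety, and so is the incidence variety
\[ \mathcal{F}_{T,k} = \{(\Lambda, g) : g \in \GL(V),\ g|_{\Lambda} = \mathrm{id},\ g W_\bullet = W_\bullet\} \to \operatorname{Gr}(k,T). \]
The fibres of this map are algebraic subgroups preserving $W_\bullet$. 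I would then stratify the base by a finite constructible decomposition on which the fibres are flat; generic flatness applied iteratively gives finitely many strata, which become the $\mathcal{Y}$'s. Every Mumford--Tate group is then the fibre over the $\mathbb{Q}$-point $\Lambda_h = $ the $\mathbb{Q}$-span of the Hodge tensors of $h$ in $T$, since that span is defined over $\mathbb{Q}$. Finally I would restrict each $\mathcal{Y}$ to the Zariski closure of such points if one wants only Mumford--Tate groups in the family, though the statement only requires that all of them appear.

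The main obstacle is the uniform bound on $(a,b)$. Chevalley's theorem gives some representation for each group separately, but a uniform choice needs the finiteness of conjugacy classes of the relevant groups together with the fact that conjugation does not change the needed degree. I would first try to argue for the reductive quotient by Richardson-type finiteness, and then handle the unipotent radical inside the fixed finite-dimensional group $W_{-1}\operatorname{End}(V)$ by a Noetherian argument on a Hilbert scheme of subgroups of bounded degree. If that stalls, I would instead bound degrees directly: Mumford--Tate groups are generated by images of $\mathbb{S}$ and unipotent parts, which have bounded degree as subvarieties of $\GL(V)$. Bounded degree then yields a bounded family through the Chow variety, and a bounded family of subgroups is defined by tensors of bounded degree.
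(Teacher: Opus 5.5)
The load-bearing step of your plan is false as stated. Reductive subgroups of $\GL(V)$ do \emph{not} fall into finitely many $\GL(V)(\overline{\mathbb{Q}})$-conjugacy classes. Already in $\GL_2$, the tori $\{\operatorname{diag}(t^a,t^b)\}$ with $\gcd(a,b)=1$ are pairwise non-conjugate for distinct pairs $\{a,b\}$ (up to a common sign), because the weights of $V$ under the torus are a conjugacy invariant.

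Richardson's rigidity only gives finiteness for semisimple subgroups. For reductive $H$ it gives open orbits in $\operatorname{Hom}(H,\GL(V))$, but that variety has infinitely many components as soon as $H$ has a central torus. What you actually need is finiteness up to conjugation of Mumford--Tate groups \emph{with the fixed Hodge numbers $\mathcal{S}$}. This is the result attributed to Deligne that the paper imports from \cite[Thm. 4.14]{zbMATH06492665} for the pure case. It is precisely the Hodge numbers, i.e.\ the weights of the Hodge cocharacter, that control the central torus. Without this input your uniform bound on $(a,b)$ has no support.

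Note also the direction of the logic. In the paper the uniform bound on $(a,b)$ is \autoref{lem:chev}, deduced \emph{from} the present lemma. Once one has conjugacy-finiteness for the reductive parts, the families come essentially for free. One takes preimages in $\GL(V)^{W}$ of Mumford--Tate groups of $\operatorname{gr}^{W}V$, their Levi subgroups (parametrized algebraically), subgroups of the unipotent radical (Lie subalgebras of $W_{-1}\operatorname{End}(V)$, so a Grassmannian), and extensions. That is the paper's proof, and your incidence-variety construction with flattening stratification is not needed for it.

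Your fallback is the right kind of idea, but as written it is not yet an argument. To make it one you need three facts. First, $\MT(h)$ is the subgroup generated by the $\operatorname{Aut}(\mathbb{C}/\mathbb{Q})$-conjugates of the image of the Hodge cocharacter; in the mixed case this cocharacter is built from Deligne's splitting, so the unipotent radical needs no separate treatment. Second, all these images are $\GL(V)(\mathbb{C})$-conjugate to one torus determined by $\mathcal{S}$, hence have fixed degree. Third, by Borel's generation lemma at most $2\dim\GL(V)$ of them already generate, so $\MT(h)$ has uniformly bounded degree. The Chow variety of bounded-degree subgroups preserving $W_\bullet$ then yields the finitely many families directly. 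This would be a genuinely different, self-contained proof, compared with the paper's route of quoting Deligne's finiteness and then handling the mixed case by extensions.

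A smaller point concerns your incidence variety. Chevalley gives $\MT(h)$ as the stabilizer of a \emph{line}, whereas you impose $g|_{\Lambda}=\mathrm{id}$ on subspaces of untwisted $V^{\otimes a}\otimes V^{\vee\otimes b}$. In nonzero weight $\MT(h)$ contains the homotheties, which fix no nonzero vector when $a\neq b$, so $\MT(h)$ need not be a fibre of your family. Use $\{(\ell,g): g\ell=\ell\}\to\mathbb{P}(T)$ instead, with $\MT(h)$ the fibre over its rational Chevalley line.
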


\begin{proof}
In the case where $W_{\bullet}$ is trivial, so the Hodge structures are pure, this follows from the fact that there are finitely many Mumford-Tate subgroups of $\GL(V)$ up to complex conjugation, as proven in \cite[Thm. 4.14]{zbMATH06492665} (and attributed to P. Deligne); see also \cite[Lemma (page 58)]{zbMATH00764346}. Next we consider the short exact sequence (cf. \cite[Lemma 2]{MR1154159})
\[ 1 \to U \to \GL(V)^{W} \xrightarrow{\psi} \GL(\operatorname{gr}^{W} V) \to 1 . \]
Then $(\operatorname{gr}^{W} V, Q)$ is a polarized lattice, so applying the result in the pure case the groups $\psi^{-1}(M)$, with $M$ a Mumford-Tate subgroup of $\operatorname{gr}^{W} V$, belong to finitely many algebraic families. Then all maximal reductive subgroups of $\psi^{-1}(M)$ are isomorphic to $M$, and can likewise be parameterized algebraically in terms of $M$. In particular, there is a family $g_{\operatorname{red}} : \mathcal{F}_{\operatorname{red}} \to \mathcal{Y}_{\operatorname{red}}$ which parameterizes all reductive subgroups of $\GL(V)^{W}$ which map isomorphically to a Mumford-Tate subgroup of $\operatorname{gr}^{W} V$.

On the other hand, subgroups of $U$ are all affine linear, hence belong to some family $g_{\operatorname{uni}} : \mathcal{F}_{\operatorname{uni}} \to \mathcal{Y}_{\operatorname{uni}}$. Given two families of subgroups of $\GL(V)$ (or $\GL(V)^{W}$), extension groups whose factors are the fibres of those families can also be parameterized by an algebraic family. Each Mumford-Tate group associated to $(V, W_{\bullet}, Q, \mathcal{S})$ is such an extension, so the result follows.
\end{proof}

\begin{lem}\label{lem:chev}
Let $(V, W_{\bullet}, Q, \mathcal{S})$ be as in \autoref{MTgroupfamlem}. Then there exists an integer $d$ with the following property: for any Mumford-Tate group $M$ associated to a graded-polarized mixed Hodge structure on $(V, W_{\bullet}, Q)$ and with Hodge numbers in $\mathcal{S}$, the group $M$ is the stabilizer of a line in a tensor representation $V^{\otimes a} \otimes (V^{\vee})^{\otimes b}$ with $a + b \leq d$. 
\end{lem}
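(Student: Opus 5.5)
The plan is to combine the finiteness statement of \autoref{MTgroupfamlem} with Chevalley's theorem applied \emph{in families}, plus a Noetherian argument.

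By \autoref{MTgroupfamlem}, every Mumford-Tate group $M$ associated to $(V, W_{\bullet}, Q, \mathcal{S})$ occurs as a $\mathbb{Q}$-fibre $\mathcal{F}_y$ of one of finitely many $\mathbb{Q}$-algebraic families $g : \mathcal{F} \to \mathcal{Y}$ of subgroups of $\GL(V)$. It therefore suffices to treat a single family and produce a bound $d_g$; we then take $d = \max_g d_g$.

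Fix such a family. Replacing $\mathcal{Y}$ by a finite stratification into locally closed $\mathbb{Q}$-subvarieties, we may assume $\mathcal{Y}$ is irreducible and that $g$ is flat with reduced fibres. Passing to the generic point $\eta$ of $\mathcal{Y}$, the generic fibre $\mathcal{F}_{\eta}$ is an algebraic subgroup of $\GL(V)$ over the function field $\mathbb{Q}(\mathcal{Y})$. Chevalley's theorem \cite[Prop. 3.1]{zbMATH03853242} then gives a line $\ell_{\eta}$ in some $V^{\otimes a} \otimes (V^{\vee})^{\otimes b}$, defined over $\mathbb{Q}(\mathcal{Y})$, whose stabilizer is exactly $\mathcal{F}_{\eta}$. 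Here one should observe that Chevalley's proof can be run inside the mixed tensors of $V$: since $\GL(V)$ is reductive, the regular functions on $\GL(V)$ are exhausted by finite pieces of $\bigoplus V^{\otimes a}\otimes V^{\vee \otimes b}$, with $\det^{-1}$ absorbed via $\wedge^{\dim V}V^\vee$. The line is then $\wedge^{k}$ of a finite-dimensional subspace of ideal generators of the subgroup. Spreading out, $\ell_{\eta}$ extends to a line subbundle $\ell$ over a dense open $\mathcal{Y}^{\circ} \subset \mathcal{Y}$. The stabilizer group scheme $\mathrm{Stab}(\ell) \to \mathcal{Y}^{\circ}$ agrees with $\mathcal{F}$ over the generic point, hence over a possibly smaller dense open (both are finite-type closed subschemes of $\GL(V) \times \mathcal{Y}^{\circ}$ with equal generic fibres). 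So for every $\mathbb{Q}$-point $y \in \mathcal{Y}^{\circ}$ the group $\mathcal{F}_y$ is the stabilizer of the $\mathbb{Q}$-line $\ell_y$ in a fixed tensor space with fixed $a+b$.

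We then apply Noetherian induction to the closed complement $\mathcal{Y} \setminus \mathcal{Y}^{\circ}$, restricting the family to it and repeating the argument. This terminates after finitely many steps, giving finitely many pairs $(a,b)$, and $d$ is the maximum of $a+b$ over them.

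The main obstacle is the second step: showing that the stabilizer of the spread-out line is \emph{exactly} the fibre group on an open set, not merely containing it. Stabilizers are only upper semicontinuous in families, and the fibre of the stabilizer scheme could a priori be larger, or non-reduced in a way that changes its points. I would handle this by making the Chevalley construction uniform: choose a finite set of generators of the relative ideal of $\mathcal{F}$ in $\mathcal{O}(\GL(V))\otimes\mathcal{O}(\mathcal{Y})$ lying in a bounded-degree piece $W$. After shrinking, this piece is locally free, with the fibrewise ideal generated by $W_y$ and $\dim W_y$ constant. The standard Chevalley argument then applies fibrewise with the same $\wedge^k W$, showing that the stabilizer of $\wedge^k W_y$ equals $\mathcal{F}_y$ for every $y$ in the open set.
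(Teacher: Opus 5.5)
Your proposal is correct and follows the same route as the paper: reduce via \autoref{MTgroupfamlem} to finitely many families, apply Chevalley's theorem, and conclude by a Noetherian argument. Your generic-point spreading-out step is exactly what justifies the paper's assertion, stated without proof, that the loci $\mathcal{Y}_d$ are constructible and that $\mathcal{Y}=\bigcup_d \mathcal{Y}_d$ forces $\mathcal{Y}=\mathcal{Y}_d$ for some $d$; and the ``obstacle'' in your last paragraph is already handled by your earlier equal-generic-fibres argument, because the stabilizer scheme of a line subbundle commutes with base change, so agreement over a dense open gives $\mathcal{F}_y=\mathrm{Stab}(\ell_y)$ at every point $y$ there.
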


\begin{proof}
By \autoref{MTgroupfamlem} it suffices to prove the same statement for a family $g : \mathcal{G} \to \mathcal{Y}$ of algebraic subgroups of $\GL(V)$, with $V$ a vector space, and $\mathcal{Y}$ an algebraic variety (possibly disconnected). By a theorem of Chevalley (cf. the proof of \Cref{genRlem}), each fibre of $g$ is the stabilizer of a line in some $V^{\otimes a} \otimes (V^{\vee})^{\otimes b}$, where $(a,b)$ depends on the fibre. The condition that $(a,b)$ can be chosen with $a + b \leq d$ is a constructible algebraic condition on $\mathcal{Y}$. Thus we obtain an increasing filtration
\[ \mathcal{Y}_{0} \subset \mathcal{Y}_{1} \subset \mathcal{Y}_{2} \subset \cdots \subset \mathcal{Y} \]
of $\mathcal{Y}$ by constructible algebraic sets. Since $\mathcal{Y}$ is an algebraic variety and $\mathcal{Y} = \bigcup_{d} \mathcal{Y}_{d}$, there is some $d$ for which $\mathcal{Y} = \mathcal{Y}_{d}$. 
\end{proof}

\begin{lem}
\label{finitelymanyNLequivlem}
Let $\V$ be a $\Z$VHS on $S$ with associated flag variety $\ch{D}$. Let $\ch{D}^0= \mathbf{M}_{s_0}(\C) \cdot \psi (s_0)$ be the monodromy orbit of some base point $s_0$ (where $\psi$ is as in \eqref{psidef}). There exists a family $g : Y \to W$ of algebraic subvarieties of $\ch{D}$, with $W$ possibly disconnected but of finite-type, with the following property:
\begin{itemize}
    \item For each sub-Mumford-Tate domain $\ch{D}'$ of $\ch{D}^0$, there is a $\mathbb{Q}$-point $w \in W$ such that $\ch{D}'$ appears as a component of $g^{-1}(t) \cap \ch{D}^0$.
\end{itemize}
\end{lem}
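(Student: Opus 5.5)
We reduce the statement to the finiteness of families of Mumford-Tate subgroups from \autoref{MTgroupfamlem}. Write $(V, W_{\bullet}, Q)$ for the lattice $\V_{s_0}$, and let $\mathcal{S}$ be the Hodge numbers of $\V$. Let $g_k : \mathcal{F}_k \to \mathcal{Y}_k$, $k = 1, \hdots, r$, be the finitely many $\mathbb{Q}$-algebraic families of subgroups of $\GL(V)$ given by \autoref{MTgroupfamlem}. Every Mumford-Tate group of a graded-polarized Hodge structure on $(V, W_{\bullet}, Q)$ with Hodge numbers $\mathcal{S}$ appears as a $\mathbb{Q}$-fibre $\mathcal{F}_{k,y}$ for some $y \in \mathcal{Y}_k(\mathbb{Q})$.

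A sub-Mumford-Tate domain $\ch{D}' \subset \ch{D}^0$ has the form $\ch{D}' = \mathbf{G}'(\C) \cdot h$, where $h \in \ch{D}^0$ is a Hodge flag with Mumford-Tate group $\mathbf{G}' = \mathbf{G}_h$. (Equivalently, up to components, it is a component of the locus of flags in $\ch{D}^0$ whose Mumford-Tate group is contained in $\mathbf{G}'$.) The natural family to take is therefore an orbit family. Set $W := \bigsqcup_k \mathcal{Y}_k \times \ch{D}$, possibly after stratifying so that everything is flat. Then define $Y \subset W \times \ch{D}$ by
\[ Y_{(y, h)} := \overline{\mathcal{F}_{k,y}(\C) \cdot h} \subset \ch{D} . \]
This is a constructible family of orbit closures for an algebraic group action on a projective variety, so it is of finite type over $W$. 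However, the point $(y,h)$ must be $\mathbb{Q}$-rational, while $h$ is a complex Hodge flag. We therefore avoid parameterizing by $h$.

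Instead, we use the fact that $\ch{D}'$ is a component of the fixed locus
\[ \ch{D}^{\mathbf{G}'} := \{ F \in \ch{D} : \text{all } \mathbf{G}'\text{-invariant tensors } t \text{ of bounded degree are of type } (0,0) \text{ for } F \} . \]
By \autoref{lem:chev}, $\mathbf{G}'$ is the stabilizer of a line in a tensor space $V^{\otimes a} \otimes (V^\vee)^{\otimes b}$ with $a+b \le d$, and $d$ is uniform. Hence the condition that a flag $F$ has Mumford-Tate group contained in $\mathbf{G}'$ becomes an algebraic condition: the invariant lines (Hodge tensors) in representations of degree at most $d$ must lie in the appropriate Hodge filtration pieces. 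These incidence conditions depend algebraically on $y \in \mathcal{Y}_k$. So we define the family as
\[ Y = \{ (y, F) \in \mathcal{Y} \times \ch{D} : F^{0}\big( (V^{\otimes a} \otimes V^{\vee \otimes b})_F \big) \supset (\text{invariant tensors of } \mathcal{F}_y \text{ of degree} \le d) \} , \]
appropriately twisted by $\mathbb{Q}(c)$ with bounded $c$. This is closed and algebraic over each $\mathcal{Y}_k$, and has finite type because $\mathcal{Y}$ does. It is defined over $\mathbb{Q}$, and the point $w = y$ is the $\mathbb{Q}$-point corresponding to $\mathbf{G}'$.

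The key step, and the main obstacle, is showing that $\ch{D}'$ is actually an irreducible component of $g^{-1}(w) \cap \ch{D}^0$, and not merely contained in it. The locus $g^{-1}(w) \cap \ch{D}^0$ is the Noether-Lefschetz-type locus of flags for which the $\mathbf{G}'$-tensors are Hodge. We will use the standard fact that, near a point $h$ with $\mathbf{G}_h = \mathbf{G}'$, this locus is locally the orbit $\mathbf{G}'(\C) \cdot h$. Its tangent space is $\mathfrak{g}'_{\C}/F^0\mathfrak{g}'_{\C}$: the flags fixing these tensors form a union of $\mathbf{G}'(\C)$-orbits, and the orbit through $h$ is open in the locus. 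Combined with irreducibility of $\ch{D}'$, this shows $\ch{D}'$ is a component. If that local argument proves delicate in the mixed case, we would pass to $\operatorname{gr}^W$ and handle the unipotent radical separately, as in the proof of \autoref{MTgroupfamlem}.
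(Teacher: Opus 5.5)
Your overall route is the paper's: use the uniform bound of \autoref{lem:chev} so that every Mumford--Tate group is cut out by rational tensor data of bounded degree, and take for $g$ the finitely many algebraic incidence families in $\ch{D}$ attached to such data. The paper differs in two respects. It parameterizes directly by $\mathbb{Q}$-lines $\ell \subset V^{\otimes a} \otimes (V^{\vee})^{\otimes b}$ with $a+b \leq d$, rather than by the group families $\mathcal{Y}_k$ of \autoref{MTgroupfamlem}. So $W$ is a finite union of projective spaces, the fibre over $[\ell]$ is the locus of flags for which $\ell$ is of Hodge type, and the required $\mathbb{Q}$-point is the Chevalley line of $\mathbf{G}'$; this needs no stratification. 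It also first reduces from $\ch{D}^0$ to $\ch{D}$.

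The step that fails as written is your replacement of the Chevalley line by ``invariant tensors of degree $\leq d$'' in the definition of $Y$. \autoref{lem:chev} only says that $\mathbf{G}'$ is the \emph{stabilizer} of a line $\ell$, on which it acts by a character that is in general nontrivial. Hence the group $\mathbf{H} \supseteq \mathbf{G}'$ fixing all $\mathbf{G}'$-invariants of degree $\leq d$ can be strictly larger than $\mathbf{G}'$. For example, in the pure case of nonzero weight $\mathbf{G}'$ contains the homotheties, so a Chevalley line in $V^{\otimes a} \otimes (V^{\vee})^{\otimes b}$ with $a \neq b$ contributes no invariant at all. Moreover, ``twisting by $\mathbb{Q}(c)$'' is not defined for an abstract subgroup of $\GL(V)$ until you choose a character of it.

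Your own tangent computation then shows that near $h$ the fibre of $g$ is the orbit $\mathbf{H}(\mathbb{C}) \cdot h$, with tangent space $\mathfrak{h}_{\mathbb{C}}/F^0\mathfrak{h}_{\mathbb{C}}$, so $\ch{D}'$ need not be a component. The fix is to impose the condition on $\ell$ itself: $\ell_{\mathbb{C}} \subset F^p$ with $2p$ the weight of $\ell$, together with $\ell \subset W_{2p}$ in the mixed case. Take $0 \neq v \in \ell$, which has type $(p,p)$ at $h$, and let $X \in \bigoplus_{r<0} \mathfrak{gl}(V)^{r,s}$ (Deligne's bigrading at $h$) represent a tangent vector. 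The first-order condition $Xv \in F^p$ forces $Xv = 0$, which for such $X$ is equivalent to $Xv \in \mathbb{C}v$. Since $\mathbf{G}' = \operatorname{Stab}(\ell)$ and $\mathfrak{g}'$ is a sub-Hodge structure of $\mathfrak{gl}(V)$, the tangent space of the fibre at $h$ is exactly $\mathfrak{g}'_{\mathbb{C}}/F^0\mathfrak{g}'_{\mathbb{C}}$. So the $\mathbf{G}'(\mathbb{C})$-orbit is open in the fibre, as you wanted.

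Finally, a sub-Mumford--Tate domain of $\ch{D}^0$ is in general a component of $(\mathbf{G}'(\mathbb{C}) \cdot h) \cap \ch{D}^0$, not the whole orbit, which need not lie in $\ch{D}^0$. So, as in the paper, run the component argument in $\ch{D}$ and intersect with $\ch{D}^0$ at the end. With these repairs, your tangent-space argument justifies the component property that the paper's proof only asserts.
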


\begin{proof}
Since each sub-Mumford-Tate domain of $\ch{D}^0$ is obtained as the intersection of $\ch{D}^0$ with a sub-Mumford-Tate domain of $\ch{D}$, it is enough to find a family of subvarieties of $\ch{D}$ such that all sub-Mumford-Tate domains of $\ch{D}$ appear as components of its $\Q$-fibers. Each such sub-Mumford-Tate domain $\ch{D}''\subset \ch{D}$ is a component of a locus where the associated Hodge flags contain an extra $\Q$-line in some faithful linear $\Q$-representation of $\GL (\V_{s_0})$. \Cref{lem:chev} shows that it is enough to consider lines associated to finitely many representations, and hence finitely many algebraic families, and so there are only finitely many families of subvarieties of $\ch{D}$ to be considered.
\end{proof}

\section{Algebraic Hodge generic points and applications}\label{sec6}

In this final section, we prove the results announced in \Cref{sec1} and \Cref{section2}.

\subsection{General setting and $K$-geometric $\Z$VHS} \label{setting}

\subsubsection{Geometric variations of Hodge structures}

\begin{defn}
 Let $K\subset \Qbar \subset \C$ be a field and $S$ a smooth and geometrically connected $K$-variety. A \emph{$K$-structure} on a $\Z$VHS $\V = (\V_\Q, \mathbb{W}_{\bullet}, \mathcal{V}, F^\bullet, \nabla, \mathcal{Q})$ on $S(\C)$ is the data of an algebraic model over the $K$-variety $S$ of the complex vector bundle $\mathcal{V}$, its subbundles $(F^\bullet, \mathbb{W}_{\bullet}\otimes_{\Q_{S(\C)}} \mathcal{O}_{S(\C)})$, and the connection $\nabla$, compatible with the natural inclusions and identifications.
\end{defn}

\begin{rem}
    Given a smooth projective morphism $f: X \to S$ defined over $K$, each $R^jf_*\Q_{S(\C)}$ is naturally a $\Z$VHS with $K$-structure.
\end{rem}

\begin{defn}\label{everythinggeneralizesrem} 
Let $K \subset \Qbar \subset \C$ be a field and $S$ a smooth and geometrically connected $K$-variety. A $K$-\emph{geometric} $\Z$VHS is a $\Z$VHS with a $K$-structure obtained by taking direct sums, direct factors, tensor products, and iterated extensions of polarizable $\mathbb{Z}$VHS with $K$-structure arising from the cohomology of smooth proper families $X\to S$ defined over $K$. 
\end{defn} 

Consider the case where $S$ is a (non-empty) open subvariety of $\mathbb{P}^{1}$ and let $\V$ be a $K$-geometric VHS on $S$.  Associated to the connection of $\V$ there is a differential operator $L$. Moreover, as explained in \Cref{thmGoperator}, $L$ is a $G$-operator, since it is obtained by successive extensions of (pure quotients of) algebraic de Rham vector bundles with connection. A $\Z$VHS with an associated $G$-operator is all we need for our applications of \Cref{thmwithproof}. 

\subsubsection{Quasi-projective families defined over $K$ are sources of $K$-geometric $\Z$VHS}
\label{VHSconstrsec}

Let $f: X \to S$ be any quasi-projective family of algebraic varieties. By a theorem of Verdier \cite[Cor. 5.1]{zbMATH03520733} given \emph{any} family $f : X \to S$ of complex algebraic varieties (that is, any map of algebraic varieties), there is a non-empty Zariski open subset $U \subset S$ such that $f^{-1}(U) \to U$ is a topological fibration in the complex analytic topology; we therefore assume $f$ is a locally trivial topological fibration, as this will not affect any density results we wish to prove. It then follows from Saito's theory of mixed Hodge modules \cite{zbMATH04200363}, and in particular the stability of polarizable mixed Hodge modules under direct images of quasi-projective morphisms, that the cohomology local systems $R^{j} f_{*} \mathbb{Q}$ admit the structure of a graded-polarizable mixed $\mathbb{Z}$VHS satisfying Griffiths transversality and the admissibility condition. If, furthermore, the family is defined over $K$, Saito's construction shows that $R^jf_*\Q$ is, in fact, a $K$-geometric $\Z$VHS. (We remark here also that it is known that the open subset appearing in Verdier's theorem can also be defined over $K$ if $f$ is, as follows from the theory of Whitney stratifications over $K$, see, e.g., \cite[Lem. 3.1.9.]{zbMATH06126017}.)

\subsection{Abundance of algebraic Hodge generic points -- proof of \Cref{mainhodgegenthm}}\label{sec6.2}

In this section, we prove \autoref{mainhodgegenthm} and \Cref{effectivethmintro}, using \autoref{thmwithproof} and the results from \S \ref{lemmas}. For the rest of the section we let let $K\subset \Qbar \subset \C$ be a number field and $S$ a smooth and geometrically connected $K$-variety.

\begin{thm}
\label{generalHggenthm2}
Let $\V$ be a $K$-geometric $\Z$VHS on $S$. There exists an integer $d$ such that the subset
\[ \operatorname{HgGen}(S,\V,d) := \{ s \in S(\overline{\mathbb{Q}}) : [\kappa(s) : K] \leq d, \hspace{0.5em} s \notin \HL(S, \V^{\otimes}) \} \]
is analytically dense in $S(\mathbb{C})$.
\end{thm}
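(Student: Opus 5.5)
The plan is to reduce to curves in $\mathbb{P}^1$ and then feed the Hodge-theoretic condition into \Cref{thmwithproof}.

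\textbf{Step 1: reduction to a curve.} Density is local, so fix a nonempty analytic open $\mathcal{U} \subset S(\mathbb{C})$. After shrinking $S$ we take a finite étale $K$-map $S \to U \subset \mathbb{A}^\ell$ (Noether normalization), or argue directly on an affine chart. We pick a point $p \in U(K)$ whose fibre meets $\mathcal{U}$. By \Cref{genRlem}, and its proof via Lefschetz, there is a $K$-rational line through $p$, which we may perturb to meet $\mathcal{U}$, whose preimage contains a Hodge generic curve $R$. The curve $R$ is finite over a Zariski open of $\mathbb{P}^1$ of degree bounded by $d_0 := \deg(S \to U)$. Since lines through $K$-points near $\mathcal{U}$ give a dense family, it suffices to show that for one such curve, $\mathbb{P}^1$-points $x \in K$ accumulating at a chosen point near $\mathcal{U}$ have a preimage in $R$ that is Hodge generic. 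Such preimages have $[\kappa(s):K] \le d_0$, which gives $d$. To push forward, we take $\pi_* \V|_R$, which is still $K$-geometric with the same Hodge generic locus over $x$, since the fibre is a direct sum over preimages. Moving the chosen point to $\infty$ by a $K$-Möbius transformation, we reduce to $S \subset \mathbb{P}^1$ open, with $\V$ $K$-geometric and Hodge generic.

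\textbf{Step 2: Hodge loci give non-trivial relations of bounded degree.} Choose a $K$-frame of $\mathcal{V}$ compatible with $F^\bullet$ and $W_\bullet$. The period matrix $\mathbf{P}$ is then a fundamental matrix of the associated $G$-operator (\S\ref{thmGoperator}). Let $\ch{D}^0$ be the monodromy orbit, and let the local lift $\psi$ be the image of $\mathbf{P}$ in $\ch{D}$ under a $\mathbb{Q}$-algebraic (hence $K$-algebraic) map. If $x \in S(K)$ lies in $\HL$, then $\psi(x)$ lies in a proper sub-Mumford-Tate domain $\ch{D}' \subsetneq \ch{D}^0$. By \Cref{finitelymanyNLequivlem} and \Cref{lem:chev}, $\ch{D}'$ is cut out by equations coming from a $\mathbb{Q}$-fibre of finitely many families, so its degree is bounded by some $\delta$ independent of $x$. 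Pulling back gives a $\mathbb{Q}$-, hence $K$-, polynomial relation $R$ of degree $\le \delta$ on $\mathbf{P}(x)$. Some care is needed, because the equations are defined over $\mathbb{Q}$ in the Betti coordinates, while the relation must be on $\mathbf{P}(x)$ with coefficients in $K$. We therefore write the condition as "there is a rational tensor $t$ with $\mathbf{P}(x)$-transformed line in $F^0$". Eliminating $t$ is awkward, so instead we use the finite-type family: the defining equations are polynomial in $(\mathbf{P}, w)$ with $w \in W(\mathbb{Q})$ a rational parameter, hence they have $\mathbb{Q}$-coefficients after substituting $w$.

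\textbf{Step 3: non-triviality, the main obstacle.} We must show $R$ does not vanish on any $K$-component of $B_{\mathbf{P},x}$ through $\mathbf{P}(x)$. By \S\ref{bundconstrsec}, $B_{\mathbf{P},x}$ contains $\mathbf{P}(x)\cdot \mathbf{M}$-type orbits, i.e. $H^\circ$-translates. The algebraic monodromy $\mathbf{M}$ acting on the Betti side moves $\psi(x)$ over all of $\ch{D}^0$, which is irreducible, and $\ch{D}' \subsetneq \ch{D}^0$ is proper. If $R$ vanished on a component, the whole orbit $\mathbf{M}(\mathbb{C})\psi(x)$, or a Zariski dense piece of it, would lie in $\ch{D}'$, which is a contradiction. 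Two points need checking here:
\begin{itemize}
\item[(a)] the orbit structure of $B_{\mathbf{P},x}$ is the differential Galois group acting on the Betti side, so that it projects onto $\ch{D}^0$;
\item[(b)] Galois conjugation over $K$ of components does not spoil this; it does not, since $\ch{D}^0$ is defined over $\mathbb{Q}$.
\end{itemize}
Hodge genericity of the curve ensures that $\ch{D}^0$ is the Mumford–Tate domain of the generic group, so that "proper" is correct.

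\textbf{Step 4: conclusion.} By Step 3, each $K$-point of $S$ in $\HL$ lies outside $\mathcal{R}_{\mathbf{P},\delta}$. \Cref{thmwithproof}(1) then gives points $x \in \mathcal{R}_{\mathbf{P},\delta}$ accumulating at $\infty$, which are therefore Hodge generic. Varying the centre as in Step 1 yields density with $[\kappa(s):K]\le d$.
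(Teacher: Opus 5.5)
Your proposal follows essentially the same route as the paper: finite \'etale reduction and push-forward, a Hodge generic $K$-rational line from \Cref{genRlem}, uniformly bounded-degree $\mathbb{Q}$-relations from \Cref{lem:chev} and \Cref{finitelymanyNLequivlem}, non-triviality via the monodromy orbit $\ch{D}^0$ (which you spell out more explicitly than the paper does), and then \Cref{thmwithproof}(1). Two small fixes are needed. First, choosing $p \in U(K)$ whose fibre meets $\mathcal{U}$ requires $K$ to be dense in $\mathbb{C}$ (it fails for $K=\mathbb{Q}$); the paper arranges this by adjoining $\sqrt{-1}$, which at most doubles $d$. Second, in (b) $\ch{D}^0$ need not be defined over $\mathbb{Q}$; the correct reason is that every $K$-component of $B_{\mathbf{P},x}$ through $\mathbf{P}(x)$ is stable under the connected $\mathbb{Q}$-group $\mathbf{M}$ acting on the Betti side, and hence contains the whole orbit $\mathbf{P}(x)\cdot\mathbf{M}(\mathbb{C})$ lying over $\ch{D}^0$.
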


\begin{rem}
If $S$ is a curve and $K$ is dense in $\mathbb{C}$, our proof shows that the integer $d$ in \autoref{generalHggenthm2} can be taken to be the $K$-gonality of $S$, i.e. the minimum degree of a nonconstant $K$-morphism to $\PP^1$.
\end{rem}

\autoref{generalHggenthm2} follows from \autoref{thmwithproof}(1), as we now explain. Using the more precise \autoref{thmwithproof}(3), we will also get the following, which is, in fact, essential to control the density of the various intersection loci appearing in \Cref{mainthmhyper}:

\begin{thm}\label{quantitativeHodgegen}
Let $\V$ be a $K$-geometric $\Z$VHS on $S$ an open subvariety of $\PP^1$. Among the points of $\PP^1(K)$ of the form $[t:1]$ with $t$ an integer, the subset of such points which lie in $\operatorname{HgGen}(S,\V,1)$ has natural density $1$ in $\mathbb{N}$.
\end{thm}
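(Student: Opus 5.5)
Our plan is to deduce \autoref{quantitativeHodgegen} from \autoref{thmwithproof}(3), applied to the $G$-operator $L$ attached to $\V$. Three things are needed: a translation from Hodge non-genericity at $[t:1]$ into a non-trivial relation of uniformly bounded degree $\delta$ on $\mathbf{P}(t)$; the choice of this $\delta$; and the pigeonhole estimate turning (3) into density one.

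\emph{Setting up the operator.} First shrink $S$ so that the de Rham bundle $\mathcal{V}$ is trivial over $S$, with a $K$-basis compatible with the Hodge and weight filtrations. This discards only finitely many $K$-points. Choosing a cyclic vector over $K(s)$ produces a $G$-operator $L$ of rank $n$ (see \S\ref{thmGoperator} and \S\ref{setting}). Its fundamental matrix $\mathbf{P}$, taken near $\infty$, is the de Rham–Betti period matrix up to a $K(s)$-rational gauge change. Since that gauge change is defined over $K$, relations of degree $\leq\delta$ on one matrix transform into relations of degree $\leq c\delta$ on the other, with $c$ independent of $t$. Nontriviality is preserved as well, because the gauge identifies $B_{\mathbf{P}}$ with the Zariski closure of the graph of the period matrix.

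\emph{Key step: from Hodge loci to bounded-degree relations.} Suppose $s=[t:1]\in\HL(S,\V^\otimes)$. Then $\mathbf{G}_s\subsetneq\mathbf{G}$, so the Hodge flag $\psi(s)$ lies in a proper sub-Mumford–Tate domain of the monodromy orbit $\ch{D}^0$. By \autoref{finitelymanyNLequivlem} and \autoref{lem:chev}, this domain is a component of a $\Q$-fibre $g^{-1}(w)\cap\ch{D}^0$ of one of finitely many algebraic families. Concretely, it is the locus where the flag contains a fixed rational line $\ell$ in some $V^{\otimes a}\otimes V^{\vee\otimes b}$ with $a+b\leq d$.

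In the coordinates of $\mathbf{P}(s)$, the condition "$\ell$ lies in the relevant Hodge piece" is a collection of polynomial equations in the entries of $\mathbf{P}(s)$ of degree $\leq d$. These equations have $\Q$-coefficients, coming from $\ell$, so they are defined over $K$. We take $R$ to be one of them, of degree $\delta:=d$, independent of $s$.

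Nontriviality of $R$ in the sense of \Cref{def:nontrivial} is where I expect the real work to lie. Every component of $B_{\mathbf{P},s}$ through $\mathbf{P}(s)$ is a translate of the algebraic monodromy group (after \autoref{imIlem}). Its image in $\ch{D}$ is therefore an open piece of $\ch{D}^0$. Since the sub-Mumford–Tate domain is proper in $\ch{D}^0$, the condition cannot hold identically on such a component, so $R$ does not vanish on it. To make this rigorous I would argue via the theorem of the fixed part, as in \autoref{genRlem}: if $R$ vanished on a whole component, then $\ell$ would be monodromy-invariant up to finite index, hence Hodge everywhere. It would then already be fixed by $\mathbf{G}$, contradicting $\mathbf{G}_s\neq\mathbf{G}$.

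\emph{Density.} We have shown that Hodge non-generic integral points lie outside $\mathcal{R}_{\mathbf{P},\delta}$. Now apply \autoref{thmwithproof}(3) with arbitrary $\nu$. We get $D=D(\nu)\leq n^2(|\Sigma|+1)^2+\nu$ such that every arithmetic progression block $t,t+D,\dots,t+(n^2+\nu-1)D$ with $t>\kappa(\nu)$ contains at least $\nu$ good points. Partition $[1,X]$ into the $D$ residue classes mod $D$, and cut each class into consecutive blocks of length $n^2+\nu$. This shows that the proportion of bad points up to $X$ is at most $n^2/(n^2+\nu)+o(1)$. Letting $\nu\to\infty$ gives upper density $0$ for the bad points, hence natural density one, since all these points are $K$-rational with $[\kappa(s):K]=1$.
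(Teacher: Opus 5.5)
Your proposal is correct and follows the paper's argument. The paper proves \Cref{quantitativeHodgegen} by rerunning the proof of \Cref{generalHggenthm2}, using \autoref{finitelymanyNLequivlem} to turn Hodge non-genericity into a non-trivial relation of uniformly bounded degree on $\mathbf{P}(s)$, then invoking \autoref{thmwithproof}(3) instead of (1), with $s$ allowed to be centred at any point of $\PP^1$; your residue-class pigeonhole count is the implicit step from (3) to natural density one.

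Three things need tidying, none of which affects the argument. First, take the relation to be a generic $K$-linear combination of the bounded-degree equations expressing that $\ell$ is Hodge, since a single equation may vanish on the monodromy coset through $\mathbf{P}(s)$. Second, the components of $B_{\mathbf{P},s}$ through $\mathbf{P}(s)$ are not monodromy translates themselves. They contain the monodromy coset through $\mathbf{P}(s)$, because $B_{\mathbf{P}}$ is stable under monodromy, and that containment is what your argument needs. Third, the degree of the relation is bounded in terms of $d$ and $n$, not by $d$ itself.
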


\begin{proof}[Proof of \Cref{generalHggenthm2}]
Arguing by Noetherian induction, we may assume that $S$ has an \'etale map $S \to \mathbb{A}^{\ell}$ for some $\ell$ with image $U$. Thanks to \cite[\href{https://stacks.math.columbia.edu/tag/02NW}{Lemma 02NW}]{stacks-project}, we may replace $U$ with a Zariski open and $S$ with its inverse image so that $e : S \to U$ is additionally finite, hence finite \'etale. Consider the $K$-geometric VHS $e_{*} \V$. Since the degree of $e$ is bounded by some integer $k$, if $d$ is divisible by $k$ we can prove the density of $\operatorname{HgGen}(S,\V,d)$ by proving the density of $\operatorname{HgGen}(U, e_{*} \V, d/k)$. Thus we can assume that $S$ is a rational open subset of some $\mathbb{A}^{\ell}$. If $K$ is not dense in $\C$ we may adjoin $\sqrt{-1}$ so that it is, and hence reduce to the case where $d = 1$.

Let $p \in S(K)$, and choose using \autoref{genRlem} a Hodge generic rational curve $R \subset S$ (in the sense of \Cref{hodgegen}) which is defined over $K$ and passes through $p$. We may reduce to the case where $S = R$. It suffices to prove that Hodge generic points accumulate in $S$ near $p$. We let $S$ be the $\PP^1 - \Sigma$ of \autoref{thmwithproof}, such that $p$ is identified with $\infty$, and so $s$ is a uniformizing parameter at $p$.


Let $\varphi$ be the period map associated to $\V$, as in \S\ref{vhsandpermapsec}. We let $\mathbf{P}(s)$ be the Betti-de Rham period matrix, as constructed as \S\ref{vhsandpermapsec}. Let $L$ be the $G$-operator associated to the connection on $\V$, cf. the discussion after \Cref{everythinggeneralizesrem}. This period matrix may also be regarded as a solution of $L$ in the sense of \S\ref{Gopsec}. In particular, we may apply \autoref{thmwithproof} to conclude that, for each integer $\delta$, there are infinitely many values of $s = 1/(t + jD)$, with $t+jD$ an integer, such that $\mathbf{P}(s)$ does not satisfy any non-trivial $K$-algebraic relations of degree at most $\delta$. We now show that, taking $\delta$ large enough, this implies that all such values of $s$ are Hodge generic. It suffices to show that there is a uniform $\delta$, such that whenever $s$ is not Hodge generic, then $\mathbf{P}(s)$ satisfies a non-trivial relation of degree $\delta$.

Recall that a $K$-relation $H$ on $\mathbf{P}(s)$ is said to be non-trivial if it induces a non-nilpotent element in the local coordinate ring of $B_{\mathbf{P},s}$ at $\mathbf{P}(s)$; cf. \Cref{def:nontrivial}. Write $M_{\mathbf{P}}$ for the $\mathbb{C}$-Zariski closure of the graph of $\mathbf{P}$; of course $M_{\mathbf{P}} \subset B_{\mathbf{P}}$. To produce a non-trivial relation on $\mathbf{P}(s)$, it will suffice to produce a relation $H$ vanishing at $\mathbf{P}(s)$ such that $\dim(M_{\mathbf{P},s} \cap H) < \dim M_{\mathbf{P}} - 1$; such a relation is automatically not nilpotent in the local coordinate ring of $B_{\mathbf{P},s}$ at $\mathbf{P}(s)$. 

Recall that $\mathbf{P}$ was constructed with respect to a fixed filtration-compatible basis $b_{1}, \hdots, b_{n}$ of $\V(B)$, where $B \subset S(\C)$ was some simply connected open subset. Let $(V, W_{\bullet}, Q)$ be as in \S\ref{vhsandpermapsec}, and write $\ch{L}$ for the full flag variety of all flags on $V$ with the same Hodge numbers as those of $\V$; note that $\ch{D}$ is a closed subvariety of $\ch{L}$, and we have a natural map $e : B_{\mathbf{P}} \to \ch{L}$ which sends the graph of $P$ into $\ch{D}$. Note that the image of $P(s)$ in $\ch{D}$ actually lands in $\ch{D}^{0}$ for each $s$; this is explained in \cite[\S4.6]{2024arXiv240616628B}. From \autoref{finitelymanyNLequivlem} we see that there exists a family $g : Y \to W$ of algebraic subvarieties of $\ch{D}$ (and hence $\ch{L}$), with $W$ possibly disconnected but of finite-type, such that $\mathbf{P}(s)$ lies in a $\mathbb{Q}$-fibre of $g$ whenever $s$ is not Hodge generic. Moreover, one knows that each fibre of $g$ intersects the image of $M_{\mathbf{P},s}$ under $e$ properly: that is we have
\[ \dim (M_{\mathbf{P},s} \cap (g \circ e)^{-1}(w)) < \dim M_{\mathbf{P},s} \]
for each $w \in W$ and $s \in S$.

Because $W$ is finite-type, the ideals defining the fibres of $e \circ g$ are given by generators whose degrees are uniformly bounded by some integer $\delta$. Applying \autoref{thmwithproof}(1) with this $\delta$ we may conclude.
\end{proof}
(The idea of bounding the degree of the exceptional relations in terms of the degree of exceptional tenors appears also, in a special case, at very end of \cite[(page 58)]{zbMATH00764346}.  In the above proof, \autoref{finitelymanyNLequivlem} plays indeed a similar role.)

\begin{proof}[Proof of \Cref{quantitativeHodgegen}]
    The proof is exactly as above, but, at the very end, one concludes applying \autoref{thmwithproof}(3), rather than its first point. Moreover one does not assume that the parameter $s$ is centered at a $K$-point of $S = \mathbb{P}^1 - \Sigma$, but instead allows $s$ to be a local coordinate at any point of $\mathbb{P}^1$.
\end{proof}


\subsubsection{Effective strengthening}
\label{effectsec}
So far we have used \autoref{thmwithproof} only as a density statement, but in fact \autoref{thmwithproof}(3) implies a \emph{finer effective} result concerning which parameter values can be Hodge generic. The following implies \Cref{effectivethmintro}:
\begin{cor}
\label{effcor}
Let $\V$ be a $K$-geometric $\Z$VHS on $S$ an open subvariety of $\PP^1$. Let $\alpha \in [0,1) \in \mathbb{Q}$. There exist three integers $\kappa, D, N$ depending on $\alpha$, effectively computable from the equations defining the Gauss-Manin connection on $S$, such that when $t > \kappa$ at least an $\alpha$-fraction of the points in the subset
\[ \{ [t : 1], [t + D : 1], \hdots, [t + DN : 1] \} \subset S(K)\]
are Hodge generic.
\end{cor}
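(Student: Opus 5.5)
The plan is to combine \autoref{thmwithproof}(3) with the uniform-degree reduction from the proof of \Cref{generalHggenthm2}. Let $L$ be the $G$-operator attached to the Gauss--Manin connection of $\V$ on $S=\PP^1-\Sigma$ (\Cref{thmGoperator}), let $n$ be its rank, and let $\mathbf{P}$ be the Betti--de Rham period matrix near $\infty$, viewed as a fundamental matrix of $L$. As in the proof of \Cref{quantitativeHodgegen}, we do not need $\infty\in S$, since \autoref{thmwithproof} allows $\infty\in\Sigma(L)$.

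The first step is to fix $\delta$. Using \autoref{finitelymanyNLequivlem} and \autoref{lem:chev}, choose $\delta$ so that every $K$-point $s$ that is not Hodge generic makes $\mathbf{P}(s)$ satisfy a non-trivial relation of degree $\le\delta$. This is exactly what the proof of \Cref{generalHggenthm2} produces: $\delta$ bounds the degrees of the generators of the ideals of the fibres of $g\circ e$. Consequently $\mathcal{R}_{\mathbf{P},\delta}\cap S(K)$ consists of Hodge generic points.

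The second step is the counting. Given $\alpha\in[0,1)\cap\mathbb{Q}$, choose $\nu$ with $\nu/(n^2+\nu)\ge\alpha$; for instance $\nu=\lceil \alpha n^2/(1-\alpha)\rceil$. Set $N=n^2+\nu-1$, and let $D=D(\nu)$ and $\kappa=\kappa(\nu)$ be as in \autoref{thmwithproof}(3), enlarging $\kappa$ so that all $[t+jD:1]$ with $t>\kappa$ avoid $\Sigma$. Then for $t>\kappa$, at least $\nu$ of the $N+1=n^2+\nu$ points $[t+jD:1]$ lie in $\mathcal{R}_{\mathbf{P},\delta}$, hence are Hodge generic, which gives an $\alpha$-fraction.

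The third step, and the main obstacle, is effectivity. $n$, $\Sigma(L)$ and $D$ are clearly computable: $D$ comes from the explicit avoidance argument in \autoref{lemmashift}. The constant $\kappa$, however, comes from \autoref{thm:bombieri} through \S\ref{heightboundsec}. I would argue that the constants $c_1,c_2$ there are effective in Andr\'e's treatment, since they depend on the size and the global radii of the $G$-operator, which are computable from $L$. The degree $\rho$ of the relation on $\mathbf{H}$ is obtained from $\delta$ by elimination theory in \autoref{deeplemma}, which is effective given equations for $\mathcal{B}_{\mathbf{J}}$. Those equations are computable since they define a torsor under a differential Galois group, and that group is computable, for example by Hrushovski's algorithm. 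The $\delta$ from \autoref{lem:chev} must also be made effective: the Noetherian stabilization there is non-constructive, so I would instead bound $a+b$ via an explicit Chevalley-type bound in terms of the degrees of the families in \autoref{MTgroupfamlem}. That family is explicitly constructed from $(V,W_\bullet,Q,\mathcal{S})$, so this bound should be obtainable. Finally, the ``max over $J$'' in the proof of (3) is a finite maximum, so it preserves effectivity.
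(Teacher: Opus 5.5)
Your proposal is correct and is essentially the paper's own argument. You fix the uniform $\delta$ from the proof of \Cref{generalHggenthm2}, apply \autoref{thmwithproof}(3) with $\nu$ large enough, and trace effectivity of $D$ through \autoref{lemmashift} and of $\kappa$ through the Bombieri--Andr\'e constants of \autoref{thm:bombieri}. Your bookkeeping ($N=n^2+\nu-1$ with $\nu/(n^2+\nu)\ge\alpha$, taking $\nu\ge 1$ when $\alpha=0$) is more careful than the paper's. Your remark that the degree bound $\delta$, obtained by Noetherian stabilization in \autoref{lem:chev}, must itself be made effective addresses a point the paper passes over silently, although your fix for it (an explicit Chevalley-type bound) is only sketched.
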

For a general discussion on how to compute the Gauss-Manin connection of a given family of varieties, we refer to \cite[\S2]{urbanik2021sets}, \cite[Prop. 9.4]{2024arXiv240616628B}.
\begin{proof}
Follow the proof of \autoref{quantitativeHodgegen} until the invocation of \autoref{thmwithproof} at the very end, and with $s$ a parameter center at $\infty$. In this case, one invokes \autoref{thmwithproof}(3) for $\nu$ large enough so that $\alpha < \frac{\nu}{n^2 + \nu + 1}$. Then one can take $\kappa$ and $D$ to be the $\kappa$ of \autoref{thmwithproof}(3), and take $N = n^2 + \nu + 1$. To see effectivity, one follows the proof of \autoref{thmwithproof} to control the constants $\kappa$ and $D$. The integer $D$ comes from \autoref{lemmashift}, and requires testing finitely many disjointness condition $jD \neq \sigma_1 - \sigma_2$. The constant $\kappa$ comes from \S\ref{heightboundsec}, where it is chosen using \autoref{thm:bombieri} so that the entries of $\mathbf{H}$ in \S\ref{heightboundsec} have no points $s = 1/t$ with $t > \kappa$ admitting non-trivial $K$-relations of degree $\rho$, where $\rho$ is as in \S\ref{heightboundsec}. Since a system of differential equations for $\mathbf{H}$ can be derived from the original Gauss-Manin system, and $\rho$ can likewise be computed from an explicit presentation of the associated differential modules, everything reduces to determining the constants $c_{1}, c_{2}$ in \autoref{thm:bombieri}. That these constants are effective is well-known and explained in \cite[\S VII]{zbMATH00041964} with explicit formulas (cf. \cite[\S11, \S12]{zbMATH03721021}).
\end{proof}

\subsection{Motivic applications}
We now prove the motivic applications described in \Cref{sec1} and \Cref{section2} (for smooth projective families of varieties). We note here that such results have natural effective strengthenings as in \Cref{effectsec}, since, in fact, they are deduced from the same theorem.

\subsubsection{The period conjecture up to a certain degree}\label{proofperiod}

\begin{proof}[Proof of \Cref{galperiodconj}]
    Follows directly from \autoref{thmwithproof}(1) and the fact that geometric VHS give rise to $G$-operators. Indeed, as in the proof of \Cref{generalHggenthm2}, the statement is reduced to the case where the base $S$ is a Zariski open of $\mathbb{P}^1$ and then one applies \autoref{thmwithproof}(1) to the $G$-operator associated to the Gauss-Manin connection. Note that the condition in \autoref{degdeltaperconj} that a relation does not vanish on any irreducible component of $B_{\mathbf{P},s}$ agrees with the definition of non-triviality appearing in \autoref{def:nontrivial}.
\end{proof}

Under assumption \eqref{bigmon}, we can also revisit our results on the period conjecture and state them in more classical terms. Note that under \eqref{bigmon}, the locus $S_{P,j,\delta}$ appearing in \Cref{galperiodconj} can be reinterpreted as the locus where all period relations up to degree $\delta$ are accounted for by the generic absolute Mumford-Tate group; in particular, \autoref{perconj} is just a reformulation of the Grothendieck period conjecture for Hodge generic points. Moreover if, for a fixed Hodge generic point $s \in S(\Qbar)$, \autoref{degdeltaperconj} holds for all integers $\delta$, then in fact \autoref{perconj} holds: given a $\Qbar$-relation $R$ of some degree $d$ vanishing on $\mathbf{P}(s)$, one can take a product of $[\kappa(R) : \kappa(s)]$ conjugates of $R$ to obtain a relation $\mathcal{R}$ of degree $\delta = d \cdot [\kappa(R) : \kappa(s)]$ defined over $\kappa(s)$. Since $B_{\mathbf{P
},s}$ is irreducible in this setting (it is a torsor for the generic Mumford-Tate group under assumption \eqref{bigmon}), the relation $\mathcal{R}$ is also non-trivial.

We obtain the following corollary of \autoref{galperiodconj}.
 
\begin{cor}\label{mainthmonperiod}
Let $f : X \to S$ be a smooth projective family defined over a number field $K \subset \Qbar \subset \mathbb{C}$ and $j$ a natural number. If $[R^jf_*\Q]_{\operatorname{prim}}$ satisfies \eqref{bigmon} then, for any $\delta \geq 1$, there is a set of points in $S(\Qbar)$, dense in $S(\mathbb{C})$, where the Grothendieck period conjecture holds up to degree $\delta$.
\end{cor}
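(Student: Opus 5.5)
The plan is to intersect two dense sets: the Hodge generic $\Qbar$-points, and the points produced by \autoref{galperiodconj}. The argument then reinterprets "no nontrivial relations of degree $\delta$ over $\kappa(s)$" as the period conjecture up to degree $\delta$, using \eqref{bigmon}.

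First, I run the proof of \autoref{generalHggenthm2} and of \autoref{galperiodconj} simultaneously rather than separately. By Noetherian induction, the finite étale reduction, and \autoref{genRlem}, I reduce to $S$ being a Hodge generic $K$-rational curve $\PP^1-\Sigma$, centred at a point $p$ identified with $\infty$. I then apply \autoref{thmwithproof}(1) to the $G$-operator of the Gauss–Manin connection on $[R^jf_*\Q]_{\operatorname{prim}}$ (a $G$-operator by \autoref{thm:andre-geometric}), with degree bound $\max(\delta,\delta_0)$. Here $\delta_0$ is the uniform degree from the proof of \autoref{generalHggenthm2}, which guarantees that points without nontrivial relations of degree $\delta_0$ are Hodge generic. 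This yields points of $S(K)$ accumulating at $p$ that are both Hodge generic and lie in $S_{P,j,\delta}$. By the density of such $p$, the resulting set is dense in $S(\C)$.

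Second, I identify what membership in $S_{P,j,\delta}$ means at a Hodge generic point under \eqref{bigmon}. At such a point, the Zariski closure $B_{\mathbf{P}}$ of the period graph contains the torsor for the monodromy group acting on the right. I would argue that it is a torsor under $\mathbf{M}\cdot\mathbb{G}_m$, possibly after accounting for the homothety factor. The upper bound comes from the absolute Hodge tensors, which are flat and give $K$-algebraic relations, so $B_{\mathbf{P},s}$ sits inside a torsor for $\mathbf{G}_{\mathrm{abs}}$. The equalities $\mathbf{M}'=\mathbf{G}'=\mathbf{G}'_{\mathrm{abs}}$ make the two bounds coincide, so $B_{\mathbf{P},s}$ is an irreducible torsor of dimension $\dim \mathbf{G}=\dim\MT(\V_s)$. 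The absence of relations of degree $\le\delta$ not vanishing on $B_{\mathbf{P},s}$ then says exactly that, up to degree $\delta$, the relations among periods are the ones predicted by the Mumford–Tate group. This is the definition of the period conjecture up to degree $\delta$, namely \autoref{degdeltaperconj}.

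The main obstacle is this second step: making the torsor identification precise. In particular, one must handle the $\mathbb{G}_m$ factor, which requires showing the period matrices see the full weight/Tate twist, since the transcendence of $2\pi i$ enters here. One must also ensure $B_{\mathbf{P},s}$ is $K$-irreducible rather than merely geometrically irreducible. For irreducibility, I would first pass to a finite extension of $K$ over which all components of $B_{\mathbf{P}}$ are defined. Such an extension is harmless for density, and irreducibility then follows from connectedness of $\mathbf{G}$.
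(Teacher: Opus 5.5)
Your route is the paper's: trap $B_{\mathbf{P}}$ between the monodromy torsor (from analytic continuation) and a Mumford--Tate torsor (from flat tensors whose de Rham realizations are $K$-rational, which is where \eqref{bigmon} enters), then invoke \autoref{galperiodconj}. Using the degree bound $\max(\delta,\delta_0)$ to force Hodge genericity is a sensible way to make explicit what the paper leaves implicit.

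However, you stop exactly at the step that carries the content of the paper's proof. Your sentence that $\mathbf{M}'=\mathbf{G}'=\mathbf{G}'_{\mathrm{abs}}$ ``make the two bounds coincide'' is false as written: they agree only modulo homotheties, so the lower bound is one dimension short of $\dim\mathbf{G}$. The missing argument is short.
\begin{enumerate}
\item $B_{\mathbf{P}}$ is stable under right translation by the monodromy group, hence by its Zariski closure, which contains $\mathbf{M}$.
\item So inside the $\mathbf{G}$-torsor, $B_{\mathbf{P}}$ is determined by its image under $(s,A)\mapsto(s,\det A)$ in $S\times\mathbb{G}_m$. If $B_{\mathbf{P}}$ were not the whole torsor, this image would be a proper $K$-subvariety, i.e.\ $\det\mathbf{P}$ would be algebraic over $K(S)$.
\item But $\det\mathbf{P}$ is a constant times an algebraic function. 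Specializing at any $s\in S(\Qbar)$ shows that this constant is a non-zero algebraic multiple of a non-zero power of $2\pi i$ when $j>0$; the power comes from the Tate twist in the polarization.
\end{enumerate}
Transcendence of $\pi$ then forces $B_{\mathbf{P}}$ to be the full $\mathbf{G}$-torsor, so $\dim B_{\mathbf{P},s}=\dim\mathbf{G}$.

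Separately, your irreducibility fix is not the right one. Geometric irreducibility implies $K$-irreducibility, not conversely. Moreover, enlarging $K$ cannot merge geometric components of $B_{\mathbf{P},s}$ that come from a disconnected Zariski closure of monodromy. The paper instead passes to a finite \'etale cover of $S$ (harmless for density) so that monodromy lands in the connected group $\mathbf{G}$. This is also what makes the invariant tensors genuinely flat global sections rather than flat only up to a finite monodromy orbit, so you already need it for your upper bound.
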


\begin{proof}[Proof of \Cref{mainthmonperiod}]
    This will follow from \Cref{galperiodconj} as long as we can show that the variety $B_{P}$ in the statement of \Cref{galperiodconj} is a torsor for the generic Mumford-Tate group. It suffices to prove the statement after replacing $S$ with a finite \'etale covering, so we may assume that the image of monodromy lands inside the generic Mumford-Tate group. We recall the construction of $\mathbf{P}$. One fixes a frame $v_{1}, \hdots, v_{n}$ for the algebraic de Rham cohomology bundle $[R^{j} f_{*} \Omega^{\bullet}_{X/S}]_{\operatorname{prim}}$ and a basis $b_{1}, \hdots, b_{n}$ for the fibre of $\V = [R^{j} f_{*} \mathbb{Q}]_{\operatorname{prim}}$ at some point $s_{0} \in S(\mathbb{C})$, and lets $P$ be the graph of the change-of-basis isomorphism. Let $\mathbf{G}_{s_{0}}$ be the realization of the generic Mumford-Tate group in $\GL(\V_{s_{0}})$. Then if $t$ is a tensor invariant of $\mathbf{G}_{s_{0}}$, $t$ extends to a flat section $\widetilde{t}$ of a local system in the Tannakian category generated by $\V$, and has a corresponding de Rham realization $\widetilde{t}_{\operatorname{dR}}$. Moreover, as a consequence of our assumption (\ref{bigmon}), the section $\widetilde{t}_{\operatorname{dR}}$ is defined over $K$. Then the relation on $(s, A) \in S \times \GL_{m}$ given by $A \cdot \widetilde{t}_{\operatorname{dR}}(s) = t$ is a $K$-algebraic relation on the graph of $\mathbf{P}$. Ranging over all such tensor invariants $t$, one sees that $B_{\mathbf{P}}$ lies inside a torsor $B_{\operatorname{MT}}$ for the Mumford-Tate group $\mathbf{G}_{s_{0}}$.

    On the other hand, by analytic continuation, $B_{\mathbf{P}}$ contains a torsor $B_{M}$ for the (complex points of the) algebraic monodromy group $\mathbf{M}$. Since $\mathbf{M} = \mathbf{G}_{s_{0}} / \mathbb{G}_{m}$, it suffices to show that the extra relation of the form $\det \mathbf{P}(s) = F(s)$, where $F(s)$ is some algebraic function, is not defined over $K$ unless $j = 0$. Specializing to some $\Qbar$-point $s \in S(\Qbar)$, one observes via the standard computation (cf. \cite[\S1]{zbMATH03853242}) that $F(s)$ is a $\Qbar$-multiple of $\pi^j$, and therefore not $\Qbar$-algebraic.
\end{proof}

\begin{proof}[Proof of \Cref{hypercor}:]
Apply \autoref{mainthmonperiod} to the $\mathbb{Z}$VHS in degree one associated to the specified family of hyperelliptic curves, where the base $S$ is $\mathbb{P}^1 - \Delta$ where $\Delta$ is the discriminant of $f(x)(x-t)$. Note that the effective strengthening of \autoref{mainthmonperiod} follows from a parallel argument to \autoref{effcor}, which we omit. That the assumption of \eqref{bigmon} holds follows from an unpublished theorem of J-K. Yu. See also \cite[Thm. 1.2]{zbMATH05224877}.
\end{proof}

\subsubsection{The Mumford-Tate Conjecture: proof of \Cref{mainthmonMT}}\label{proofofmt}

Assume first that $\ell$ is a prime (recall our conventions on $\ell$ from \S\ref{mmr}). Arguing as in the beginning of the proof of \Cref{generalHggenthm2}, we reduce ourselves to the case where $S \subset \mathbb{P}^1$, with the inclusion defined over a number field $K$. We will fix a parameter $s = 1/t$ on $\mathbb{P}^1$, and show that points in $S_{\operatorname{MT}, j,\ell}$ accumulate near $s = 0$. 

As explained in \Cref{relworksec}, Serre's version of the Hilbert irreducibly theorem shows all points of $\PP^1(K)$ outside of a thin set have $\ell$-adic monodromy ``as big as possible''; let us recall what this means.

First, we recall that a thin subset of $\PP^1(K)$ is a subset given as the union of the images of finitely many maps $C(K) \to \PP^1(K)$, where $C \to \PP^1$ is a ramified covering of degree $> 1$. Regarding $\ell$-adic monodromy, recall that we have a split exact sequence
\begin{equation}
\label{galoisexactseq}
\begin{tikzcd}
1 \arrow[r] & \pi^{\textrm{\'et}}_1(S_{\overline{K}}, \overline{t}) \arrow[r] & \pi^{\textrm{\'et}}_1(S, \overline{t}) \arrow[r] & \textrm{Gal}(\overline{K}/K) \arrow[r] \arrow[bend right=30, swap, "\sigma_{t}"]{l} & 1
\end{tikzcd}
\end{equation}
with the section labeled ``$\sigma_{t}$'' corresponding to a choice of $K$-point $t \in S(K)$. The family $f : X \to S$ over $K$ induces a lisse $\ell$-adic sheaf $[R^j f_{\'et, *} \Q_\ell]_{\operatorname{prim}}$, which can be identified with a representation $\rho : \pi^{\textrm{\'et}}_{1}(S, \overline{t}) \to \GL(H^{j}_{\'et}(X_{\overline{t}}, \mathbb{Q}_{\ell})_{\operatorname{prim}})$. By \cite[\S10.6]{zbMATH00042767} (cf. \cite[Fact 3.3.1.1]{zbMATH06657570}) there exists a thin subset of $S(K)$ outside of which $\rho$ and $\rho \circ \sigma_{t}$ have the same image.

Observe that the intersection with $\mathbb{N} = \{ [t : 1] \, | \, t \in \mathbb{Z} \} \subset \PP^1(K)$ and a thin subset $\mathcal{T}$ of $\PP^1(K)$ has density zero: indeed the number of integral points of height at most $H$ of $\mathcal{T}$ is $O(H^{1/2})$ (cf. \cite[p. 27]{zbMATH00053635}). By \Cref{quantitativeHodgegen}, the integral values of $t$ that are Hodge generic have natural density one in $\mathbb{N}$, so we obtain an infinite subset of $S(K)$, accumulating near $s = 0$, where the both the Mumford-Tate group and the Galois action are as large as possible.

Now the generic Mumford-Tate group at $t$ has the algebraic monodromy group $\mathbf{M}_{t} \subset \GL(H^j(X_{t,\overline{\mathbb{C}}}, \mathbb{Q}))$ as a subgroup, and the Zariski closure of 
\[ \textrm{im} \left[ \pi^{\'et}_{1}(S, \overline{t}) \to \GL(H^j_{\'et}(X_{\overline{t}}, \mathbb{Q}_{\ell})_{\operatorname{prim}}) \right] \]
has as a subgroup the Zariski closure of the image of $\pi^{\'et}_{1}(S_{\overline{K}}, \overline{t})$. Under the Artin comparison theorem $H^{j}(X_{t,\mathbb{C}}, \mathbb{Q})_{\operatorname{prim}} \otimes \mathbb{Q}_{\ell} \xrightarrow{\sim} H^{j}(X_{\overline{t}}, \mathbb{Q}_{\ell})_{\operatorname{prim}}$ these two geometric monodromy groups agree by \cite[Lem. 3.3]{zbMATH07532081} (cf. \cite{zbMATH07144489}). It then follows from \eqref{bigmon} that the Mumford-Tate group of $H^{j}(X_{t,\mathbb{C}}, \mathbb{Q})_{\operatorname{prim}}$ and the Zariski closure of the image of $\rho \circ \sigma_{t}$ agree under the Artin comparison, up to the homothety factor $\mathbb{G}_{m}$. This homothety factor is non-trivial on either side if and only if $j = 0$, so the result follows.

The case where $\ell$ is not prime follows similarly by carrying out the argument for each prime factor of $\ell$ and observing that the intersection of finitely many density one sets still has density one.

\begin{rem}
If one assumes only the validity of the first equality of \eqref{bigmon}, i.e., that the monodromy $\mathbf{M}$ is equal to the generic Mumford-Tate group up to the diagonal $\mathbb{G}_m$ factor (which is needed whenever the weight is non-zero), the above argument shows the abundance of points $t \in S(K)$ where $\MT(X_t)_{\Q_\ell}$ is naturally contained in the Zariski closure of the $\ell$-adic monodromy at $s$. Indeed, one obtains that, up to the action of $\mathbb{G}_{m, \Q_\ell}$ corresponding to the cyclotomic character, $\MT(X_t)_{\Q_\ell}$ is contained in the Zariski closure of the image of $\pi_1^{\textrm{\'et}}(S_{\overline{K}})$. This is ``half of'' the Mumford-Tate conjecture for $X_t$; the other half would follow if one knew that the global Hodge tensors which define the generic Mumford-Tate group satisfied the $\ell$-adic absolute Hodge property.
\end{rem}

\subsubsection{Families of smooth hypersurfaces}

In this final section, we prove of the results announced in the introductory section \S\ref{mmr}, and in particular discuss applications to the Hodge and Tate conjectures. Let $S=U_{n,d}$ be the parameter space of smooth degree $d$ hypersurfaces in $\PP^{n+1}$. This is naturally defined over $\Q$, for more details on its construction see e.g. \cite{BKU2}. We let $\V$ be the polarized $\mathbb{Z}$VHS associated to the primitive cohomology in degree $n$. We would like to show density of the locus
\[
S_{\operatorname{Motivic}, j,\ell, \delta} \;=\;  S_{P,j,\delta}  \;\cap\;S_{\mathrm{H},j} \;\cap\; S_{\mathrm{T},\ell,j} \;\cap\; S_{\mathrm{MT},\ell,j}.
\]
The idea is that each of loci appearing in the above intersection has density one (after restriction to any rational curve, and counting integer-valued points), and therefore their intersection is again dense.

\begin{proof}[Proof of \Cref{mainthmhyper}]
First, recall that the primitive cohomology in degree $n$ of the universal family of smooth hypersurfaces on $U_{n,d}(\C)$ has large monodromy and so satisfies the assumption \eqref{bigmon}; see indeed \cite[Sec. 9.1]{2021arXiv210708838B} and references therein. Arguing as in the previous sections, we can reduce ourselves to the case of a Hodge generic family of smooth hypersurfaces over $S=\PP^1-\Sigma$, defined over a number field $K\subset \Qbar \subset \C$ (as usual, we fix a parameter $s = 1/t$ on $\mathbb{P}^1$). Consider the loci
\begin{displaymath}
    \N_{\mathrm{MT}}:=   \{ t \in \mathbb{N} : [t : 1] \in S_{\mathrm{MT},\ell,n}\}
 \ \ \text{  and  } \ \
     \N_{P, \delta}:=   \{ t \in \mathbb{N} : [t : 1] \in S_{\mathrm{P},n,\delta}\}.
\end{displaymath}
Since condition \eqref{bigmon} is satisfied, we can apply the density one version of \Cref{mainthmonperiod} (which is argued analogously to the density one result \Cref{quantitativeHodgegen}) to see that  $\N_{P, \delta}$ has natural density one. Moreover, as explained in the proof in \Cref{proofofmt}, $\N_{\mathrm{MT}}$ has also density one. It follows that the subset $\N_{\mathrm{MT}} \cap \N_{P, \delta}$ has density one in $\mathbb{N}$, since it is the intersection of two density one subsets. 

From the construction in \Cref{proofofmt}, the hypersurface $X_t$ corresponding to a $t \in  \N_{\mathrm{MT}}\cap \N_{P, \delta}$ is Hodge generic. We are left to show that the (tannakian) Hodge and Tate conjectures hold for $X_t$. Since we already know that $X_t$ satisfies the Mumford-Tate conjecture, it is enough to show that the Hodge conjecture holds. The fact that a Hodge generic hypersurface satisfies the (tannakian) Hodge conjecture is proven in \Cref{lem:HC-generic} below, which follows from considerations on the cohomology of hypersurfaces and Weyl's theorems on invariants of symplectic and orthogonal groups. We conclude that the intersection \eqref{Smotdef} admits points arbitrarily close to $s = 0$ in $\PP^1$. Since the curve $S$ and coordinate $s$ was arbitrary, we conclude by d\'evissage the desired analytic density in $U_{n,d}(\C)$.


\end{proof}
\begin{proof}[Proof of \Cref{thmLefschetzpen}]
    Same proof as above, using that Lefschetz pencils have large monodromy (cf. \cite[Exposé XVII]{zbMATH03407568}).
\end{proof}

\begin{lem}\label{lem:HC-generic}
  Let $X \subset \PP^{n+1}$ be a smooth hypersurface of degree $d \geq 1$ over $\C$, let
  \[
    V \;:=\; H^{n}(X,\Q)_{\textrm{prim}}, \qquad
    T^{a,b,c}(V) \;:=\; V^{\otimes a} \otimes (V^{\vee})^{\otimes b}\otimes \Q(c),
    \quad a,b,c \ge 0.
  \]
Suppose $X$ is Hodge generic in $S=U_{n,d}$. Then $X$ satisfies the (tannakian) Hodge conjecture: every Hodge tensor in some $T^{a,b,c}(V)$ is algebraic. 
 
\end{lem}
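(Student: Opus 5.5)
The plan is to identify the Mumford–Tate group of a Hodge generic hypersurface explicitly, compute its tensor invariants with classical invariant theory, and check that every invariant is an algebraic class on a self-product of $X$.

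First, because $X$ is Hodge generic, $\MT(V) = \mathbf{G}$, the generic Mumford–Tate group. The universal family over $U_{n,d}$ has large monodromy (as recalled in the proof of \Cref{mainthmhyper}), so $\mathbf{M} = \operatorname{Aut}(V,Q)^{\circ}$. Since $\mathbf{M} \subset \mathbf{G} \subset \operatorname{GAut}(V,Q)$, we get $\mathbf{G} = \mathbb{G}_m \cdot \operatorname{Aut}(V,Q)$, up to connected components. This is $\operatorname{GSp}(V)$ when $n$ is odd and $\operatorname{GO}(V)$ when $n$ is even; in the even case one has to check whether the orthogonal group can be reduced to $\operatorname{SO}$. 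Small-degree cases such as $d=1,2$, or $n$ odd with $V=0$, are trivial or elementary and will be set aside.

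Second, the Hodge tensors in $T^{a,b,c}(V)$ are exactly the $\mathbf{G}$-invariants. I identify $V \cong V^{\vee}$ via $Q$, up to a Tate twist. The $\mathbb{G}_m$ factor acts through a weight character, so invariants can only occur when the weights balance, and they are then $\operatorname{Aut}(V,Q)$-invariants in $V^{\otimes(a+b)}$. By Weyl's first fundamental theorem for $\operatorname{Sp}$ and $\operatorname{O}$, these invariants are spanned by products of copies of $Q$ placed on pairs of tensor slots, i.e. by permutations of $Q^{\otimes k}$. If the group turns out to be $\operatorname{SO}$, the determinant tensor also appears, and I will have to avoid this, either by showing that the monodromy meets the non-identity component, as it does for Lefschetz reflections, or by handling it separately.

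Third, I show these invariants are algebraic. The tensor $Q \in V^{\vee}\otimes V^{\vee}(-n)$ is the restriction of the class of the diagonal $\Delta_X \subset X\times X$, by Künneth and Poincaré duality, corrected by powers of the hyperplane class $h$ to project onto the primitive parts. These corrections are algebraic because the Künneth projectors for hypersurfaces are algebraic: the cohomology outside degree $n$ is spanned by powers of $h$. The identity of $V$ in $V\otimes V^{\vee}$ is likewise the primitive part of $\Delta_X$. Permutations of tensor factors are induced by permuting factors of $X^{k}$, and external products of algebraic cycles are algebraic. Hence every invariant from the second step is algebraic on a suitable self-product $X^{a+b}$, and the Tate twists only record codimension.

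The main obstacle is the even-dimensional orthogonal case. There one must control the component group, to exclude a determinant-type invariant (a Pfaffian-like class whose algebraicity is not clear), and make sure the $\mathbb{G}_m$ weight bookkeeping is correct. I would first try to use the fact that Picard–Lefschetz reflections lie in the monodromy, so the Zariski closure of monodromy is the full $\operatorname{O}(V,Q)$; this gives $\operatorname{O}$-invariants only, and the FFT then leaves only contractions by $Q$.
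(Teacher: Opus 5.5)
Your outline follows the same route as the paper's proof: algebraic K\"unneth components of $\Delta_X$ (all cohomology outside degree $n$ is spanned by powers of $h$), the primitive part $\pi'_n$ of the diagonal realizing $Q$, Weyl's first fundamental theorem for $\MT(V)$, and permutations of the factors of $X^a$. For $n$ odd the argument is complete and agrees with the paper. There $\MT(V)=\GSp(V,Q)$, and the symplectic invariants are exactly the permuted products of $\pi'_n$; the Pfaffian is among them, since $\mathrm{Sp}\subset\mathrm{SL}$.

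For $n$ even, the obstacle you flag is real, and your proposed fix does not remove it. Let $m=\dim V$, and let $\varepsilon$ span $\wedge^m V\subset V^{\otimes m}$.

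\emph{The class $\varepsilon$ is always a Hodge tensor.} The line $\wedge^m V$ is a one-dimensional rational Hodge structure of weight $nm$, so it is $\mathbb{Q}(-nm/2)$. Hence $\varepsilon$ is a Hodge tensor in $T^{m,0,nm/2}(V)$ on every smooth fibre, whatever the monodromy is.

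\emph{It is not produced by your construction.} Every permuted product of copies of $\pi'_n$ is fixed by all of $\mathrm{O}(V,Q)$, while a reflection acts on $\varepsilon$ by $-1$. If $m$ is odd, $\varepsilon$ even lies in odd tensor degree, where there are no contractions at all. So your claim that, after the weight bookkeeping, Hodge tensors are $\mathrm{Aut}(V,Q)$-invariants is false: they are the $\mathrm{SO}(V,Q)$-invariants.

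\emph{Why the reflections do not help.} The Hodge tensors of a fixed fibre are the invariants of the connected group $\MT(V)=\mathbb{G}_m\cdot\mathrm{SO}(V,Q)$, not of the disconnected monodromy group. A Hodge tensor on one fibre need not extend to a flat global section. In fact the reflections work against you. Monodromy acts on $\wedge^m V$ through the sign character, so $\varepsilon$ is not the restriction of any monodromy-invariant class. It therefore cannot come from the diagonal, from $h$, from external products or permutations of factors, or from any other cycle defined over the whole family $U_{n,d}$.

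So for $n$ even, outside finite-monodromy cases such as quadrics and cubic surfaces, you need a genuinely new algebraic cycle on $X^m$ with nonzero component in $\wedge^m V$, and your outline does not supply one. Be aware that the paper's proof does not supply it either. It states $\MT(V)=\operatorname{GSO}(V,Q)$, then applies the fundamental theorem as if only contractions of $\pi'_n$ occurred, and reduces to even $a$. That is exactly the $\mathrm{O}$ versus $\mathrm{SO}$ issue you identified.
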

This lemma is certainly well-known, but we give it for completeness. We set $Q \colon V \otimes V \to \Q(-n)$ for the standard intersection form on $V$ (the cup product), let $h = c_1(\mathcal{O}_X(1)) \in H^2(X,\Q)$ be the hyperplane class.
\begin{proof}
Consider the diagonal $\Delta_X \subset X \times X$ with fundamental class $[\Delta_X] \in H^{2n}(X \times X, \Q)$. By the K\"{u}nneth formula,
\[
    [\Delta_X] \;=\; \sum_{i=0}^{2n} \pi_i,
    \qquad
    \pi_i \;\in\; H^{2n-i}(X,\Q) \otimes H^{i}(X,\Q).
\]
It is well-known that every $\pi_i$ is an algebraic cycle class (with $\Q$-coefficients). Indeed, by the Lefschetz hyperplane theorem, the restriction map $H^i(\PP^{n+1},\Q) \xrightarrow{\;\sim\;} H^i(X,\Q)$ is an isomorphism for $i < n$, and by Poincar\'{e} duality the analogous statement holds for $i > n$. So $H^i(X,\Q) = 0$ for $i$ odd and $i \ne n$, hence $\pi_i = 0$ for these $i$. On the other hand $H^{2k}(X,\Q) = \Q \cdot h^k$ for $2k \ne n$. One then observes that $\pi_{2k} =\frac{1}{d}\,h^{n-k} \otimes h^k$ and $\pi'_n \;=\; [\Delta_X]-\sum \frac{1}{d}\,h^{n-k} \otimes h^{k}$ (where the sums ranges from $k=0$ to $n$), are algebraic cycle classes; note that $\pi'_n = \pi_n$ when $n$ is odd and differs by $\frac{1}{d} h^{n/2} \otimes h^{n/2}$ when $n$ is even. By construction, the cohomology class associated to $\pi'_{n}$ lies in $V \otimes V^{\vee}$.

Note that, because $\pi_{n}$ also gives an algebraic-cycle-induced isomorphism between $V \otimes \mathbb{Q}(n)$ and $V^{\vee}$, we may assume that $b = 0$ and $a = 2r$ is even. When $n$ is odd (resp. even), the intersection form $Q$ is alternating (resp. symmetric),
so $V$ carries a symplectic (resp. orthogonal) structure, and the generic Mumford-Tate group is $\MT(V) = \GSp(V, Q) $ (resp. $ \operatorname{GSO}(V, Q)$). By Weyl's Fundamental Theorems of invariant theory for the symplectic and orthogonal groups \cite[Ch. 17,18]{FultonHarris}, \cite[Thm. 1B]{zbMATH04103278} all Hodge tensors of interest lie in $V^{\otimes 2r} \otimes \mathbb{Q}(nr)$ and are generated by taking products of $\pi'_{n}$ and applying the action of the symmetric group $S_{2r}$. The action of $S_{2r}$ in this case corresponds to algebraic automorphisms of $X^{2r}$ which permute the factors, so the result follows.
\end{proof}
\newcounter{savedsection}
\setcounter{savedsection}{\value{section}}

\appendix
\renewcommand{\thesection}{A}
\section{Independence of logarithms \`a la Siegel}\label{app}
\renewcommand{\thesection}{\arabic{section}}

In his seminal manuscript \cite{zbMATH02563202} (cf. \cite[\S1.4 VIII]{zbMATH06385898}), Siegel wrote:

\begin{quote}
\emph{...there exist infinitely many positive rational numbers $r_1, \hdots , r_n$ so that no algebraic relation among $\log r_1, \hdots, \log r_n$ with rational integer coefficients and bounded degree holds; in particular this holds for $n$ logarithms linearly independent over a given field.}
\end{quote}
\noindent Siegel deduced this from a $G$-function principle that would later be proven by Bombieri \cite{zbMATH03721021}. With hindsight, Siegel's observation is similar to the arguments we employ to prove \autoref{thmwithproof}. We record here a result, which is essentially a direct application of \autoref{thm:bombieri}, which could have been envisioned by Siegel.

\begin{thm*}
\label{logthm}
For any integers $\delta, k > 0$ there exists constants $a, b > 0$ depending on $k$ such that for all $t \in \mathbb{N}$ with $\log t \geq a \delta^{b}$ the set of numbers
\[ \left\{ \log\left( \frac{t+1}{t} \right), \hdots, \log\left( \frac{t+k}{t} \right) \right\} \]
satisfies no non-zero polynomial relations over $\mathbb{Q}$ of degree at most $\delta$.
\end{thm*}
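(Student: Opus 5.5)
The plan is to apply \autoref{thm:bombieri} directly to the $G$-functions
\[ G_j(z) = \log(1 + jz) = \sum_{m \geq 1} \frac{(-1)^{m+1} j^m}{m} z^m, \qquad j = 1, \hdots, k, \]
evaluated at $\xi = 1/t$. Then $G_j(1/t) = \log((t+j)/t)$, which is exactly the set of numbers in the statement. Each $G_j$ is a $G$-function: its coefficients are rationals $\pm j^m/m$, so they satisfy $|\sigma(a_m)| \leq k^m$, and the common denominators satisfy $d_m = \operatorname{lcm}(1,\hdots,m) \leq e^{2m}$. Moreover each $G_j$ is annihilated by $(1+jz)\partial^2 + j\partial$. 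We work with $K = \mathbb{Q}$ and the archimedean place $v$. The radius of convergence of all the $G_j$ is $1/k$, so $\xi = 1/t$ lies inside it once $t > k$.

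\textbf{Step 1: the functional relations are trivial.} I would first check that $G_1, \hdots, G_k$ are algebraically independent over $\mathbb{C}(z)$, so that $\mu' = k$ and $B_{\mathbf{G}} = \mathbb{P}^1 \times \mathbb{A}^k$. This follows from the standard Ostrowski/Kolchin-type argument. Suppose there were a minimal relation. Differentiating it, and using that the $G_j'$ lie in $\mathbb{C}(z)$, produces a relation of lower order unless some linear combination $\sum c_j \log(1+jz)$ is a rational function. That is impossible, since the logarithms have distinct singular points $z = -1/j$ and nonzero residues of $d\log$ there. Monodromy around $-1/j$ gives the same conclusion.

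With this in hand, any nonzero polynomial $R \in \mathbb{Q}[T_1,\hdots,T_k]$ of degree $\leq \delta$ cuts $B_{\mathbf{G},\xi} = \mathbb{A}^k$ in a hypersurface of dimension $k-1 < \dim B_{\mathbf{G}} - 1 = k$. So every nonzero relation is non-trivial in the sense of \autoref{nontrivialreldef}.

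\textbf{Step 2: verify the two inequalities of \autoref{thm:bombieri}.} We have $h(1/t) = \log t$. Condition \eqref{bomb1} reads $\log t \geq c_1 \delta^{k}(\log\delta + 1)$, and this holds if $\log t \geq a\delta^{b}$ with, say, $b = k+1$ and $a$ large depending on $c_1$. Condition \eqref{bomb2} requires
\[ \frac{1}{t} \leq \exp\!\left(-c_2\, \delta^{\frac{k}{k+1}}(\log t)^{\frac{1}{k+1}}(\log\log t)^{\frac{1}{k+1}}\right). \]
Taking logarithms, this becomes $\log t \geq c_2\,\delta^{k/(k+1)}(\log t\,\log\log t)^{1/(k+1)}$. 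That is, $(\log t)^{k}/\log\log t \geq c_2^{k+1}\delta^{k}$, which again holds once $\log t \geq a\delta^{b}$ for suitable $a$, $b$ (e.g. $b = k+1$ after enlarging $a$ to absorb the $\log\log$ factor). Enlarging $a$ further also ensures $t > k$, so that $\xi$ lies in the disc of convergence. Since $c_1$ and $c_2$ depend only on the $G_j$, hence only on $k$, the constants $a$ and $b$ depend only on $k$. \autoref{thm:bombieri} then excludes any nonzero relation of degree $\leq \delta$, because relations of lower degree can be viewed as degree-$\delta$ polynomials.

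I expect the main obstacle to be Step 1, the algebraic independence of the logarithms over $\mathbb{C}(z)$. Step 2 is only bookkeeping with the exponents in \eqref{bomb2}, where one must make sure the right-hand side decays more slowly than $1/t$. That is guaranteed because the exponent of $\log t$ there is $1/(k+1) < 1$.
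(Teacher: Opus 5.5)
Your proposal is correct and follows the paper's route: the paper's proof is the argument of \S\ref{heightboundsec} applied, via the Bombieri--Andr\'e theorem, to the $G$-functions $\log(1+js)$, $j=1,\dots,k$, evaluated at $s=1/t$. You supply details the paper leaves implicit, namely the algebraic independence of the $\log(1+jz)$ over $\mathbb{C}(z)$ (which gives $\mu'=k$ and makes every nonzero $\mathbb{Q}$-relation non-trivial) and the explicit choice $b=k+1$; your check that the right-hand side of \eqref{bomb2} decays more slowly than $1/t$, so that both \eqref{bomb1} and \eqref{bomb2} hold once $\log t \geq a\delta^{k+1}$, is the correct reading of the loosely worded discussion of that inequality in \S\ref{heightboundsec}.
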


\begin{proof}
This follows from the argument in \S\ref{heightboundsec} with $\mathbf{H}$ equal to the vector containing the $G$-functions $\log(1 + js)$ for $j = 1, \hdots, k$. The bound $\log t \geq a \delta^{b}$ for some $a, b > 0$ ultimately comes from \autoref{thm:bombieri}.
\end{proof}

One could imagine trying to prove the above theorem in a way similar to the proof of \autoref{thmwithproof}(3) by starting with the differential operator $L_{\operatorname{log},1}$ with solution basis $\{ 1, \log(s) \}$. In this case one would be able to take $D = 1$ in \autoref{lemmashift} and $B_{Q} = \{ 1 \}$ is a point. Note however that the above theorem is stronger than the analogue of \autoref{thmwithproof}(3) in this setting since we guarantee that there are no relations among the different numbers $\log((t + j)/j)$ instead of controlling the transcendence of the logarithms individually.

\setcounter{section}{\value{savedsection}}
\renewcommand{\thesection}{\arabic{section}}

\bibliography{hodge_theory}
\bibliographystyle{abbrv}

\Addresses

\end{document}